\newtheorem{Thm}{Theorem}[section]
\newtheorem{Lem}[Thm]{Lemma}
\newtheorem{Prop}[Thm]{Proposition}
\theoremstyle{definition}
\newtheorem{Rem}[Thm]{Remark}
\newtheorem{Exa}[Thm]{Example}
\begin{document}

\title[Construction of graph-directed set]{Construction of graph-directed invariant sets of weak contractions on semi-metric spaces}
\author{Kazuki Okamura}
\address{Department of Mathematics, Faculty of Science, Shizuoka University}
\email{okamura.kazuki@shizuoka.ac.jp}
\dedicatory{Dedicated to the 100th anniversary of the birth of J\'anos Acz\'el}
\maketitle

\begin{abstract}
We present a construction of graph-directed invariant sets of weak contractions in the sense of Matkowski-Rus on semi-metric spaces. 
We follow the approach by Bessenyei and P\'enzes, which applies  the Kuratowski noncompactness measure without relying on Blascke's completeness theorem. 
We also establish a relationship between this approach and a generalized de Rham's functional equation indexed by a finite directed graph.
\end{abstract}

\section{Introduction}

Hutchinson's self-similar sets \cite{Hutchinson1981} are canonical examples of fractal sets. 
Such a set is realized as the fixed point of a transformation on a metric space that maps a subset of the space to the union of the images of the subset under a finite family of transformations, known as an iterated function system (IFS). 
The fixed point is called the invariant set or attractor of the IFS. 
\cite{Hutchinson1981} shows that there exists a unique non-empty compact invariant subset if the IFS consists of contractions, 
by applying Banach's fixed point theorem to the space of non-empty compact subsets endowed with the Hausdorff-Pompeiu metric.

We now assume that a finite directed multigraph allowing self-loops and a metric space are given, with a transformation on the metric space assigned to each directed edge. 
This setting allows us to define a natural analogue of Hutchinson's self-similar sets, known as graph-directed fractal sets. 
These are described as vectors of non-empty compact subsets and can be realized as fixed points of certain transformations on product spaces. 
See \eqref{eq:def-T} below for the precise definition of the transformation. 
The fixed point is called a graph-directed invariant set, which is a notion first introduced by Mauldin and Williams \cite{MauldinWilliams1988} as a generalization of Hutchinson's self-similar sets. 
When the underlying graph consists of a single vertex, the framework reduces to the classical IFS. 
Similar to self-similar sets, graph-directed invariant sets and related topics have been studied extensively. 
For further details, see the book by Mauldin and Urba\'nski \cite{MauldinUrbanski2003}. 

In this paper, we construct an invariant vector of non-empty compact subsets for graph-directed IFSs consisting of weak contractions on semi-metric spaces. 
Semi-metric is obtained by omitting the triangle inequality from Fr\'echet's definition of metric. 
We work specifically with regular semi-metrics introduced by Bessenyei and P\'ales \cite{BessenyeiPales2017}. 
Informally, the local properties of semi-metric spaces are similar to those of metric spaces, on the other hand, a semi-metric can differ significantly in its global structure. 
We assign weak contractions in the sense of Matkowski-Rus \cite{Matkowski1975,Rus2001} as the components of the graph-directed IFS. 
Since weak contractions are less restrictive than standard contractions, a more careful analysis is required. 
Our arguments depend on the fixed point theorem provided in \cite[Theorem 1]{BessenyeiPales2017}.

Our approach follows the method introduced by Bessenyei and P\'enzes \cite{BessenyeiPenzes2020,BessenyeiPenzes2021,BessenyeiPenzes2022}, which differs from Hutchinson's approach. 
The method of Bessenyei and P\'enzes avoids the use of Banach's fixed point theorem which depends on Blaschke's completeness theorem  \cite{Blaschke1949}, and instead applies Kuratowski's measure of noncompactness \cite{Kuratowski1930}. 
This enables us to approximate the unique invariant non-empty compact subset by an explicitly constructible, increasing sequence of finite sets. 
Furthermore, it clearly demonstrates that the invariant subset depends on the topology, rather than the metric.

Our main result (Theorem \ref{thm:main-construct-inv-GDM}) extends the result of \cite{BessenyeiPenzes2022} on invariant sets of weakly contractive IFSs in semi-metric spaces to the graph-directed framework. 
The main technical novelty lies in the construction of a subinvariant vector of singletons (Lemma \ref{lem:1-subinv-exist}). 
This construction aligns naturally with a generalization of de Rham's functional equations \cite{Rham1957}, which is extended here to the graph-directed setting. 
We prove the existence and continuity of the solution. 
A similar generalization was considered by Muramoto and Sekiguchi \cite{MuramotoSekiguchi2020}.
However, our formulation via the invariant vector and conjugate equations offers a simpler and more transparent framework.

\subsection{Framework and main result}

Let $X$ be a non-empty set. 
We say that a map $d : X^2 \to [0, \infty)$ is a {\it semi-metric} on $X$ if $d(x,y) = 0$ if and only if $x=y$, and $d(x,y) = d(y,x)$ for every $x, y \in X$.  
Let $B(x,r) := \{y \in X | d(x,y) < r\}$ for $x \in X$ and $r > 0$. 
We say that a subset $U$ of $X$ is an open set if 
for every $x \in X$, there exists $r > 0$ such that $B(x,r) \subset U$. 
This defines a topology on $X$. 
We remark that $B(x,r)$ can be a non-open set.  
A closed set is a complement of an open set. 
For $H \subset X$, we denote the closure of $H$ by $\overline{H}$. 
We say that a sequence $(x_n)_n$ in $X$ is convergent if  $\lim_{n \to \infty} d(x_n, x) = 0$ for some $x \in X$, and it is a Cauchy sequence if $\lim_{n, m \to \infty} d(x_n, x_m) = 0$. 
We say that $(X,d)$ is complete if every Cauchy sequence is convergent.

We introduce a notion which plays the role of the classical triangle inequality. 
For $u, v \in [0,\infty)$, let 
\[ \mathcal{I}(u,v) := \left\{(x,y) \in X^2 \middle|  \textup{ there exists }   z \in X \textup{ such that } d(x,z) \le u, d(y,z) \le v\right\}, \]
and, 
\[ \Phi_d (u,v) := \sup\left\{d(x,y) | (x,y) \in \mathcal{I}(u,v) \right\} \in [0,\infty]. \]
For every $x \in X$, $(x,x) \in \mathcal{I}(u,v)$, and in particular, $\mathcal{I}(u,v) \ne \emptyset$. 
We call $\Phi_d$ the {\it basic triangle function}. 

We say that a semi-metric space $(X,d)$ satisfies the {\it regular} property for the basic triangle function if $\Phi_d$ is continuous at $(0,0)$, and, it satisfies the  {\it normal} property for the basic triangle function if $\Phi_d (u,v) \ne \infty$ for $u, v \in [0,\infty)$. 
Every metric space is a semi-metric space satisfying the regular and normal properties for the basic triangle function. 
The regular property was introduced in \cite{BessenyeiPales2017} and it plays the role of the classical triangle inequality. 
Informally, we can say that a regular metric satisfies a kind of  triangle inequality in ``small'' scales. 
The normal property was introduced in \cite{BessenyeiPenzes2022} and it is important when we deal with bounded subsets.  
The regular or normal property is different from the notion of the regular or normal topological space (${\bf T}_3$ space or ${\bf T}_4$ space). 

{\it Hereafter in this subsection, we assume that $(X,d)$ is a complete semi-metric space satisfying the regular and normal properties for the basic triangle function. }

We adopt Matkowski-Rus's definition\footnote{This is sometimes called the Matkowski contraction, however, the fixed point theorem for this weak contraction was independently proved by Matkowski and Rus in 1975.} of weak contractions \cite{Matkowski1975,Rus2001}.  
We say that a map $\phi : [0,\infty) \to [0,\infty)$ is a {\it comparison function} if 
$\phi$ is increasing, and 
$\lim_{n \to \infty} \phi^n (t) = 0$ for every $t > 0$. 
Let $\phi$ be a comparison function. 
We say that a map $T : X \to X$ is a {\it $\phi$-contraction} if 
$d(Tx, Ty) \le \phi(d(x,y))$ for every $x, y \in X$.

We state the framework of graph-directed IFSs as in \cite[Section 3.1]{Falconer1997}.  
Let $q \ge 1$ and $V := \{1, \cdots, q\}$. 
For $i, j \in V$, 
let $E_{i,j}$ be a finite set of directed edges from $i$ to $j$. 
We remark that $i=j$ is allowed. 
We allow multiple edges and self-loops in order to make the setting here a generalization of the self-similar set, which corresponds to the case that $q=1$.  
We assume that for every $i \in V$, there exists $j \in V$ such that $E_{i,j} \ne \emptyset$. 
This assumption is weaker than the transitivity condition. 
Assume that for each edge $e \in E_{i,j}$,  a {\it right-continuous} comparison function $\phi_e$ and  a $\phi_e$-contraction $f_e : X  \to X$ are given. 
The case that $q=1$ gives the framework of Hutchinson's self-similar sets. 

Let $\mathcal{P}(X)$ be the set of non-empty subsets of $X$. 
Hereafter, we need to deal with product spaces $X^q$ and $\mathcal{P}(X)^q$. 
Elements in these product spaces are written in bold face. 

Let 
\begin{equation}\label{eq:def-T} 
T({\bf H}) := \left(\bigcup_{j = 1}^{q} \bigcup_{e \in E_{i,j}} f_e (H_j)\right)_{i = 1}^{q}, \  {\bf H} = (H_1, \dots, H_q) \in \mathcal{P}(X)^q.  
\end{equation}

We denote ${\bf A} \subset {\bf B}$
if $A_i \subset B_i$ for each $i \in V$, where $ {\bf A} = (A_1, \dots, A_q), {\bf B} = (B_1, \dots, B_q) \in \mathcal{P}(X)^q$. 
We say that ${\bf H} \in \mathcal{P}(X)^q$ is {\it $T$-subinvariant} if ${\bf H} \subset T({\bf H})$ and {\it $T$-invariant} if ${\bf H} = T({\bf H})$. 

For a family $({\bf H}_n)_n$ in $\mathcal{P}(X)^q$, 
let $\cup_n {\bf H}_n := \left( \cup_{n} H_{n,1}, \dots, \cup_{n} H_{n,q} \right)$, where ${\bf H}_n =: \left(H_{n,1}, \dots, H_{n,q}\right)$ for each $n$. 
Let ${\bf \overline{H}} := \left(\overline{H_1}, \dots, \overline{H_q}\right)$, where ${\bf H} =: (H_1, \dots, H_q) \in \mathcal{P}(X)^q$. 
Let $\mathcal{K}(X)$ be the set of all elements of $\mathcal{P}(X)$ which are compact. 

The following is our main result. 
\begin{Thm}\label{thm:main-construct-inv-GDM}
There exists a unique ${\bf H} \in \mathcal{K}(X)^q$ which is $T$-invariant. 
Furthermore, there exists  ${\bf H}_0 \in  \mathcal{P}(X)^q$ such that ${\bf H}_0$ is a product of singletons of $X$ and is $T$-subinvariant, and ${\bf H} =  \overline{\cup_{n} T^n ({\bf H}_0)}$. 
\end{Thm}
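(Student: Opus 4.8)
\medskip
\noindent\textbf{Plan of proof.}
The plan is to adapt the Bessenyei--P\'enzes method to the graph-directed setting. First I would record two preliminary facts. Since $V$ and each $E_{i,j}$ are finite, there are only finitely many edges in all, so $\psi := \max_e \phi_e$ makes sense, and I claim it is a right-continuous comparison function: it is increasing and right-continuous as a finite maximum of such functions; any comparison function $\phi$ satisfies $\phi(0)=0$ and $\phi(t)<t$ for $t>0$ (otherwise monotonicity would prevent $\phi^n(t)\to 0$), hence $\psi(t)<t$ for $t>0$, and then $\psi^n(t)$ decreases to some $L\ge 0$ with $L=\psi(L)$ by right-continuity, forcing $L=0$. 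Also, each $\phi_e$-contraction $f_e$ is continuous, because $\phi_e(s)\to 0$ as $s\to 0^+$. With this in hand I would invoke Lemma~\ref{lem:1-subinv-exist} to fix a $T$-subinvariant ${\bf H}_0=(\{x_1\},\dots,\{x_q\})\in\mathcal{P}(X)^q$ that is a product of singletons; producing such a vector is really the heart of the matter, and is where the graph-directed de Rham equation enters.

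Next I would set ${\bf H}_n:=T^n({\bf H}_0)$ and ${\bf S}:=\bigcup_n {\bf H}_n$. Since $T$ respects $\subset$ (componentwise unions and images preserve inclusions) and ${\bf H}_0\subset T({\bf H}_0)$, the sequence $({\bf H}_n)_n$ is increasing, and each ${\bf H}_n$ is a vector of \emph{finite} sets because ${\bf H}_0$ is a vector of singletons; unwinding the definition of $T$ on an increasing union one checks that $T({\bf S})={\bf S}$. The quantitative core is to show that the Kuratowski measure of noncompactness $\alpha$ vanishes on each $S_i$. One first needs $S_i$ to be bounded; this is exactly where the normal property is used, together with the finiteness of the graph and the special (fixed-point) structure of ${\bf H}_0$, as in \cite{BessenyeiPenzes2022}. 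Granting boundedness, from $\alpha(A\cup B)=\max(\alpha(A),\alpha(B))$, from $\alpha(f_e(A))\le \phi_e(\alpha(A))$ (a finite $\varepsilon$-cover of $A$ maps under $f_e$ to a finite $\phi_e(\varepsilon)$-cover of $f_e(A)$), and from $T({\bf S})={\bf S}$, one obtains
\[
\alpha(S_i)=\alpha\!\left(\bigcup_{j=1}^{q}\bigcup_{e\in E_{i,j}}f_e(S_j)\right)\le \max_{j}\psi\bigl(\alpha(S_j)\bigr)=\psi\Bigl(\max_{j}\alpha(S_j)\Bigr).
\]
Maximizing over $i$ gives $a\le\psi(a)$ for $a:=\max_i\alpha(S_i)<\infty$, hence $a=0$. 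Thus $\alpha(S_i)=0$ for all $i$, and since $(X,d)$ is complete and regular, the Kuratowski-type compactness criterion of \cite{BessenyeiPenzes2020,BessenyeiPenzes2021,BessenyeiPenzes2022} shows that each $\overline{S_i}$ is compact.

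Now I would set ${\bf H}:=\overline{\bf S}=\overline{\bigcup_n T^n({\bf H}_0)}\in\mathcal{K}(X)^q$. That ${\bf H}$ is $T$-invariant follows by using continuity of the $f_e$ and finiteness of the unions for the inclusion $T({\bf H})\subset{\bf H}$, and compactness of the $\overline{S_j}$ (so that $T({\bf H})_i$ is a finite union of closed sets containing $S_i$, hence containing $\overline{S_i}$) for the reverse inclusion; this simultaneously yields existence and the claimed approximation by the explicit increasing sequence of finite sets $T^n({\bf H}_0)$. For uniqueness I would take another $T$-invariant ${\bf H}'\in\mathcal{K}(X)^q$ and consider the excess $\delta:=\max_i\sup_{y\in H_i}\inf_{z\in H'_i}d(y,z)$, which is finite because $H_i\cup H'_i$ is compact, hence bounded (normal property). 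Writing any $y\in H_i=T({\bf H})_i$ as $y=f_e(w)$ with $w\in H_j$ and $e\in E_{i,j}$, choosing $w'\in H'_j$ with $d(w,w')\le\delta$ (the infimum being attained by compactness), and noting $f_e(w')\in H'_i$ and $d(y,f_e(w'))\le\phi_e(\delta)\le\psi(\delta)$, one gets $\delta\le\psi(\delta)$, so $\delta=0$; since a regular semi-metric space is Hausdorff (so compact sets are closed), $\delta=0$ gives $H_i\subset H'_i$, and symmetrically ${\bf H}'\subset{\bf H}$, whence ${\bf H}={\bf H}'$.

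The main obstacle is, first and foremost, Lemma~\ref{lem:1-subinv-exist} itself: in the graph-directed, weakly contractive, semi-metric setting the existence of even one $T$-subinvariant product of singletons is not automatic, and I expect it to require analysing the graph-directed de Rham functional equation and fixed points of compositions $f_{e_1}\circ\cdots\circ f_{e_k}$ taken along cycles of the graph. Within the argument above, the subtle point is the boundedness of the $S_i$: without a global triangle inequality one cannot simply iterate $\Phi_d$, so the normal property, the finiteness of the graph, and the structure of ${\bf H}_0$ all have to be used together.
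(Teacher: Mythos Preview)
Your outline is correct and mirrors the paper's proof almost step for step: Lemmas~\ref{lem:1-subinv-exist}--\ref{lem:5-unique-inv-cpt} establish, in order, the subinvariant singleton vector, the identity $T({\bf S})={\bf S}$ with ${\bf S}$ bounded, the vanishing of the Kuratowski measure, the compactness and $T$-invariance of $\overline{\bf S}$, and uniqueness via the contraction inequality for $d_{HP}^\infty$.

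Two of your side remarks deserve correction, however. Your expectation that Lemma~\ref{lem:1-subinv-exist} requires the de Rham equation or compositions along graph cycles is mistaken: the paper simply chooses one outgoing edge $e_i\in E_{i,J(i)}$ per vertex (such a $J$ exists by the standing assumption on the graph), forms the map $F(x_1,\dots,x_q)=(f_{e_1}(x_{J(1)}),\dots,f_{e_q}(x_{J(q)}))$ on $(X^q,d_\infty)$, checks it is a $\psi$-contraction, and applies Theorem~\ref{thm:Bessenyei-Pales-FP} directly; the fixed point $(z_1,\dots,z_q)$ gives ${\bf H}_0=(\{z_1\},\dots,\{z_q\})$. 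The de Rham material in Section~\ref{sec:fe} is an application, not an ingredient of the proof. Second, for the boundedness of the $S_i$ the paper does not argue directly on $X$ but first shows (Lemmas~\ref{lem:HP-normal-and-regular}, \ref{lem:product-HP-semi-metric}, \ref{lem:weak-contraction-HP-product}) that $T$ is a $\psi$-contraction on the normal regular symmetric pre-metric space $(\mathcal{F}(X)^q,d_{HP}^\infty)$, so by Lemma~\ref{lem:Cauchy-bounded-normal-regular} the orbit $({\bf H}_n)_n$ is $d_{HP}^\infty$-bounded; this yields a single $r_0$ with $H_{n,i}\subset H_{0,i}(r_0)$ for all $n$, and Lemma~\ref{lem:conseq-normal-semi}(ii) then gives boundedness of $S_i$.
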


The existence and uniqueness of the invariant set will be shown following Hutchinson's approach \cite{Hutchinson1981} and extending a result by Kocsis and P\'ales \cite[Theorem 17]{KocsisPales2022} to the graph-directed framework.   
However, the approximation of the invariant set by the iterated images of a finite set is important in some examples including the functional equation in Section \ref{sec:fe}.

The rest of this paper is organized as follows. 
In Section \ref{sec:prelim}, we give some preliminary results needed for the proof. 
Section \ref{sec:proof} is devoted to the proof of Theorem \ref{thm:main-construct-inv-GDM}. 
In Section \ref{sec:trans-semi-met}, we consider transforms of semi-metrics and give an example of the graph-directed invariant set.  
We deal with  graph-directed de Rham's functional equations in Section \ref{sec:fe}. 

\section{Preliminaries}\label{sec:prelim}

\subsection{Symmetric pre-metric space}\label{subsec:premetric}

In the sequel, we need to deal with a symmetric, pre-metric space. 
Let $X$ be a non-empty set. 
We say that a map $d : X^2 \to [0, \infty)$ is a {\it symmetric pre-metric} on $X$ if 
$d(x,x) = 0$ for every $x \in X$, and $d(x,y) = d(y,x)$ for every $x, y \in X$.  
A symmetric pre-metric  $d : X^2 \to [0, \infty)$ is 
a semi-metric on $X$ if and only if the positivity holds, specifically, the property that $d(x,y) = 0$ implies $x=y$. 
For a symmetric pre-metric space, 
we introduce a topology in the same manner as in the semi-metric case. 
The notions of convergence and Cauchy sequence are also defined in the same manner. 
However, the limit of a convergent sequence may not be unique. 
We can also define the basic triangle function in the same manner.  
The regular and normal properties are also defined  in the same manner.

We now state some consequences of the normal property. 
We say that a subset $H$ of $X$ is {\it bounded} if there exist $x_0 \in X$ and $r_0 > 0$ such that $H \subset B(x_0, r_0)$. 
Let $\textup{diam}(H) := \sup\{d(x,y) | x, y \in A\}$ for $H \subset X$, $H \ne \emptyset$, and we call it the {\it diameter} of $H$. 

\begin{Lem}[{\cite[Proposition 3]{BessenyeiPenzes2022}}]\label{lem:bounded-basic}
Let $(X,d)$ be a normal semi-metric space and $H$ be a non-empty subset of $X$. 
Then, $H$ is bounded if and only if $\textup{diam}(H) < \infty$. 
\end{Lem}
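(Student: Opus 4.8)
The plan is to prove the two implications separately; the equivalence is genuinely asymmetric, since only one direction invokes the normal property, and that is precisely where the basic triangle function carries the argument.

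For the direction $\textup{diam}(H) < \infty \Rightarrow H$ bounded, I would fix any point $x_0 \in H$ and set $r_0 := \textup{diam}(H) + 1$. For every $y \in H$ we then have $d(x_0, y) \le \textup{diam}(H) < r_0$, so $y \in B(x_0, r_0)$; hence $H \subset B(x_0, r_0)$ and $H$ is bounded. This implication uses only the definition of the diameter and requires neither the regular nor the normal property.

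For the converse, suppose $H$ is bounded, say $H \subset B(x_0, r_0)$ for some $x_0 \in X$ and $r_0 > 0$. The key observation is that the common center $x_0$ can play the role of the intermediate point $z$ in the definition of $\mathcal{I}$. Indeed, for any $x, y \in H$ we have $d(x, x_0) < r_0$ and $d(y, x_0) < r_0$, so the pair $(x,y)$ witnesses membership in $\mathcal{I}(r_0, r_0)$ via $z = x_0$. By the definition of the basic triangle function this yields $d(x,y) \le \Phi_d(r_0, r_0)$, and taking the supremum over $x, y \in H$ gives $\textup{diam}(H) \le \Phi_d(r_0, r_0)$. The normal property guarantees $\Phi_d(r_0, r_0) \ne \infty$, so $\textup{diam}(H) < \infty$.

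I expect no real obstacle here: the only substantive step is recognizing that a uniform bound on all distances to a single center is exactly the hypothesis defining $\mathcal{I}(r_0, r_0)$, so normality converts such a bound into a uniform bound on all pairwise distances. The remaining verifications are routine. I would also note that positivity of the semi-metric is never used, so the statement in fact holds verbatim for any normal symmetric pre-metric space, which is convenient given the role such spaces play later in the preliminaries.
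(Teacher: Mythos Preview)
Your proof is correct. Note, however, that the paper does not supply its own proof of this lemma: it is simply quoted as \cite[Proposition 3]{BessenyeiPenzes2022} and used as a black box, so there is nothing to compare against. Your argument is the standard one and matches how the basic triangle function is deployed elsewhere in the paper (e.g.\ in the proof of Lemma~\ref{lem:conseq-normal-semi}); your closing remark that positivity is never needed is also in line with the paper's later use of symmetric pre-metric spaces.
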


For $H \subset X$, $H \ne \emptyset$, 
let $H(r) := \cup_{x \in H} B(x,r)$, which is the $r$-neighborhood of $H$. 

\begin{Lem}\label{lem:conseq-normal-semi}
Let $(X,d)$ be  a normal semi-metric space. 
Then,\\
(i) Every Cauchy sequence on $X$ is bounded. \\
(ii) If $H$ is a non-empty bounded subset of $H$ and $r > 0$, 
then, $H(r)$ is also bounded.  \\
(iii) Let $H_1$ and $H_2$ be bounded subsets of $H$. 
Then, $H_1 \cup H_2$ is bounded. 
\end{Lem}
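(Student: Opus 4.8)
The plan is to derive all three statements from Lemma \ref{lem:bounded-basic} (boundedness is equivalent to finite diameter) together with the normal property, which says that the basic triangle function $\Phi_d$ takes only finite values. The key observation to use repeatedly is that if $d(x,z) \le u$ and $d(y,z) \le v$ then $(x,y) \in \mathcal{I}(u,v)$, hence $d(x,y) \le \Phi_d(u,v) < \infty$; this is the surrogate for the triangle inequality.

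For (i), let $(x_n)_n$ be a Cauchy sequence. First I would choose, using the Cauchy property, an index $N$ such that $d(x_n, x_m) \le 1$ for all $n, m \ge N$. Then for any $n$, writing $z = x_N$, we have $d(x_n, x_N) \le u_n$ and $d(x_N, x_N) = 0$, where $u_n := \max\{1, \max_{1 \le k \le N} d(x_k, x_N)\} < \infty$ is a finite bound covering both the finitely many initial terms and the tail. Thus $d(x_n, x_N) \le u := \max_{1 \le k \le N}\{d(x_k,x_N), 1\}$ for all $n$, so $\{x_n : n \ge 1\} \subset B(x_N, u+1)$ and the sequence is bounded.

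For (ii), assume $H \subset B(x_0, r_0)$, equivalently (by Lemma \ref{lem:bounded-basic}) $\mathrm{diam}(H) < \infty$; actually it is cleaner to work directly: if $y \in H(r)$ then $y \in B(x, r)$ for some $x \in H$, so $d(x,y) < r$ and $d(x, x_0) < r_0$, whence $(y, x_0) \in \mathcal{I}(r, r_0)$ and $d(y, x_0) \le \Phi_d(r, r_0) < \infty$ by the normal property. Therefore $H(r) \subset B(x_0, \Phi_d(r,r_0) + 1)$, which is bounded. For (iii), pick $x_1 \in H_1$ and use Lemma \ref{lem:bounded-basic} to get $\mathrm{diam}(H_1), \mathrm{diam}(H_2) < \infty$ and some $z$ with $d$-distance bounds linking a point of $H_1$ to a point of $H_2$; more simply, fix $a \in H_1$, $b \in H_2$, set $c := d(a,b) < \infty$, and for arbitrary $x, y \in H_1 \cup H_2$ bound $d(x,y)$ by iterating $\Phi_d$ along the chain $x \rightsquigarrow a \rightsquigarrow b \rightsquigarrow y$ (at most three applications), each step finite by normality, so $\mathrm{diam}(H_1 \cup H_2) < \infty$; then invoke Lemma \ref{lem:bounded-basic} again.

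I do not expect a serious obstacle here: everything is a bookkeeping exercise in applying $\Phi_d(u,v) < \infty$. The one point requiring a little care is (i), where one must separately absorb the finitely many terms before the Cauchy threshold $N$ and the infinite tail after it into a single finite radius; and in (iii), ensuring that the "chaining" argument uses only finitely many applications of $\Phi_d$ so that finiteness is preserved (an unbounded number of applications could blow up). Using Lemma \ref{lem:bounded-basic} to phrase boundedness as finite diameter makes these manipulations transparent.
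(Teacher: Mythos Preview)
Your proposal is correct and follows essentially the same approach as the paper: use $\Phi_d(u,v) < \infty$ as a surrogate triangle inequality to exhibit an explicit ball containing the set in question. Your argument for (i) is actually slightly cleaner than the paper's---by centering the ball at $x_N$ rather than at $x_0$ you avoid invoking $\Phi_d$ (and hence normality) altogether in that part---while for (iii) the paper is more direct, showing $H_2 \subset B(x_1, r_1 + \Phi_d(d(x_1,x_2), r_2) + 1)$ with a single application of $\Phi_d$ rather than bounding the full diameter via a three-point chain and then appealing to Lemma~\ref{lem:bounded-basic}.
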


\begin{proof}
(i) Let $(x_n)_{n=0}^{\infty}$ be a Cauchy sequence. 
Since $d(x_i, x_j) \le \Phi_d (d(x_i, x_0), d(x_j, x_0))$, for the coordinate-wise increasing property of $\Phi_d$, and the normal property for $(X,d)$,  
it suffices to show that $\sup_i d(x_i, x_0) < \infty$. 
By the assumption, there exists $N$ such that for every $m, n \ge N$, $d(x_m, x_n) \le 1$.
Then, for every $i \ge N$, 
$d(x_i, x_0) \le \Phi_d (d(x_i, x_N), d(x_N, x_0)) \le \Phi_d (1, d(x_N, x_0))$. 
Hence, $\sup_i d(x_i, x_0) < \infty$. 

(ii) By Lemma \ref{lem:bounded-basic}, there exist $x_0 \in X$ and $r_0 > 0$ such that $H \subset B(x_0, r_0)$. 
Let $y \in H(r)$. 
Then, there exists $z \in H$ such that $d(y,z) < r$. 
Hence, by  the coordinate-wise increasing property of $\Phi_d$, 
$d(x_0, y) \le \Phi_d (d(x_0,z), d(z,y)) \le \Phi_d (r_0, r)$. 
By the normal property for $(X,d)$,  $\Phi_d (r_0, r) < \infty$. 
Hence, $y \in B(x_0, \Phi_d (r_0, r)+1)$. 
Thus, $H(r)$ is bounded. 

(iii) If $H_1$ or $H_2$ is empty, then, the assertion clearly holds.
Assume $H_1$ and $H_2$ are both non-empty. 
Let $x_1 \in H_1$ and $x_2 \in H_2$. 
By the assumption, there exist $r_1, r_2 > 0$ such that $H_i \subset B(x_i, r_i)$, $i=1,2$. 
Let $y \in H_2$. 
Then, 
$d(x_1, y) \le \Phi_d (d(x_1, x_2), d(x_2, y)) \le \Phi_d (d(x_1, x_2), r_2) < \infty$. 
Hence, $y \in B(x_1, \Phi_d (d(x_1, x_2), r_2)+1)$. 
Thus, $H_1 \cup H_2 \subset B(x_1,  r_1 + \Phi_d (d(x_1, x_2), r_2)+1)$. 
This means that $H_1 \cup H_2$ is bounded. 
\end{proof}

\cite{ChrzaszczJachymskiTurobos2018} gives a detailed overview of semi-metric spaces. 

\subsection{Contraction and fixed point}\label{subsec:contra-fix}

\begin{Lem}\label{lem:comparison-basic}
(i) If $\phi$ is a comparison function, then, $\phi(0) = 0$ and  $\phi(t) < t$ for every $t > 0$. \\
(ii) If $\phi_1$ and $\phi_2$ are right-continuous comparison functions and $\phi := \max\{\phi_1, \phi_2\}$, then, $\phi$ is also a right-continuous comparison function. 
\end{Lem}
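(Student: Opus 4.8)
The plan is to extract both claims from the two defining features of a comparison function: monotonicity and $\lim_{n\to\infty}\phi^n(t)=0$ for every $t>0$.

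For (i), I would first establish $\phi(t)<t$ for $t>0$ by contradiction. If $\phi(t_0)\ge t_0$ for some $t_0>0$, then a straightforward induction using monotonicity gives $\phi^{n+1}(t_0)=\phi(\phi^n(t_0))\ge\phi(t_0)\ge t_0$, so $\phi^n(t_0)\ge t_0>0$ for all $n$, contradicting $\lim_{n\to\infty}\phi^n(t_0)=0$. The identity $\phi(0)=0$ is then immediate: monotonicity yields $0\le\phi(0)\le\phi(t)<t$ for every $t>0$, hence $\phi(0)\le\inf_{t>0}t=0$.

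For (ii), monotonicity and right-continuity of $\phi=\max\{\phi_1,\phi_2\}$ are inherited from $\phi_1$ and $\phi_2$ by the elementary facts that a finite maximum of increasing functions is increasing and that a finite maximum of right-continuous functions is right-continuous (the latter via $|\max(a,b)-\max(c,d)|\le\max(|a-c|,|b-d|)$), and $\phi(0)=\max\{\phi_1(0),\phi_2(0)\}=0$. The real content is the asymptotic condition. By part (i), $\phi_i(s)<s$ for $s>0$, so $\phi(s)<s$ for every $s>0$; together with $\phi(0)=0$ this shows that, for each fixed $t>0$, the orbit $(\phi^n(t))_n$ is non-increasing and bounded below by $0$, hence convergent to some $L\ge 0$. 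Assuming $L>0$, I would use that $\phi^n(t)\ge L$ for all $n$ and $\phi^n(t)\to L$, so right-continuity of $\phi$ at $L$ gives $\phi^{n+1}(t)=\phi(\phi^n(t))\to\phi(L)$; comparing with $\phi^{n+1}(t)\to L$ forces $\phi(L)=L$, contradicting $\phi(L)<L$ from part (i). Therefore $L=0$.

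The only place where a hypothesis beyond monotonicity is genuinely needed is this last step of (ii): right-continuity of $\phi$ at the hypothetical positive limit $L$ is exactly what permits passing $\phi$ through the limit of the decreasing orbit. I expect this to be the crux of the argument, even though it is short once the monotone-convergence setup is in place; the rest is bookkeeping.
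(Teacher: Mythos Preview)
Your proof is correct and follows essentially the same approach as the paper: a contradiction/induction argument for (i), and for (ii) the monotone-orbit-plus-right-continuity argument to force the limit to be a fixed point, hence zero. The only cosmetic difference is that the paper verifies right-continuity of $\max\{\phi_1,\phi_2\}$ via the identity $\max\{\phi_1,\phi_2\}=(\phi_1+\phi_2+|\phi_1-\phi_2|)/2$ rather than your Lipschitz-type estimate.
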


\begin{proof}
(i) Assume that $\phi(t) \ge t > 0$. 
Since $\phi$ is increasing, $(\phi^n (t))_n$ is increasing. 
Since $\lim_{n \to \infty} \phi^n (t) = 0$, $\phi^n (t) = 0$ for every $n$. 
Hence, $t = 0$. 
This contradicts $t > 0$. 
Since $\phi$ is increasing, $\phi(0) \le \phi(t) < t$ for $t > 0$. 
Hence, $\phi(0) = 0$. 

(ii) It is easy to see that $\phi$ is increasing, $\phi(0) = 0$ and $\phi(t) < t$ for $t > 0$. 
Since $\phi = (\phi_1 + \phi_2 + |\phi_1 - \phi_2|)/2$, $\phi$ is right-continuous. 
Then, for $t > 0$, $(\phi^n (t))_n$ is decreasing with respect to $n$. 
Hence, the limit $t_0 := \lim_{n \to \infty} \phi^n (t)$ exists. 
Since $\phi$ is right-continuous, $\phi(t_0) = t_0$. 
Hence, $t_0 = 0$. 
\end{proof}

Jachymski  \cite{Jachymski1997} gave useful equivalent conditions for Matkowski-Rus's weak contraction.  
See Berlinde, Petrusel and Rus \cite{BerindePetruselRus2023} for terminology in fixed point theory. 
Le\'sniak, Snigireva and Strobin's survey \cite{LesniakSnigirevaStrobin2020} contains various results for weakly contractive IFSs.

We now state some consequences of the regular property. 

\begin{Thm}[Bessenyei and P\'ales {\cite[Theorem 1]{BessenyeiPales2017}} ]\label{thm:Bessenyei-Pales-FP}
Let $(X,d)$ be a complete semi-metric space. 
Let $\phi$ be a comparison function. 
Let $T : X \to X$ be a $\phi$-contraction. 
Then, there exists a unique fixed point of $T$. 
\end{Thm}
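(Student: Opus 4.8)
The plan is to follow the usual Picard-iteration scheme (as in \cite{Matkowski1975,Rus2001}), with the basic triangle function $\Phi_d$ and the regular property playing the role of the missing triangle inequality; this is the route taken in \cite{BessenyeiPales2017}. Uniqueness is immediate: if $a = Ta$ and $b = Tb$, then $d(a,b) = d(Ta,Tb) \le \phi(d(a,b))$, and since $\phi(t) < t$ for every $t > 0$ by Lemma~\ref{lem:comparison-basic}(i), this forces $d(a,b) = 0$, i.e.\ $a = b$. For existence, fix any $x_0 \in X$, set $x_n := T^n x_0$ and $c := d(x_0,x_1)$; using that $\phi$ is increasing one gets by induction $d(x_n,x_{n+1}) \le \phi^n(c)$, and hence $d(x_n,x_{n+1}) \to 0$ since $\phi$ is a comparison function. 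One also reads off from Lemma~\ref{lem:comparison-basic}(i) that $(\phi^n(c))_n$ is non-increasing and that $\lim_{t\to 0^+}\phi(t) = 0$, both of which are used below.

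The heart of the proof, and the step I expect to be the main obstacle, is to promote this to the statement that $(x_n)_n$ is Cauchy, where the only substitute for the triangle inequality is the bound $d(x,y) \le \Phi_d(d(x,z),d(z,y))$, valid for every $z$ by taking the witness point in the definition of $\mathcal{I}(\cdot,\cdot)$ to be $z$ itself, together with the monotonicity of $\Phi_d$ and the continuity of $\Phi_d$ at $(0,0)$ (with $\Phi_d(0,0) = 0$). I would run Matkowski's ball-absorption argument in this generalized form: choose $\varepsilon>0$ small enough that $\Phi_d(\varepsilon,\varepsilon)$ lies below the prescribed tolerance, choose $N$ so large that $\phi^N(c)$ is as small as needed, and show by induction on $k$ that $d(x_N,x_{N+k}) < \varepsilon$ for all $k$, the inductive step estimating $d(x_N,x_{N+k+1})$ via $\Phi_d(d(x_N,x_{N+1}),\phi(d(x_N,x_{N+k})))$ together with the contraction inequality $d(x_{N+1},x_{N+k+1}) \le \phi(d(x_N,x_{N+k}))$ and the inductive hypothesis; one then concludes $d(x_n,x_m) \le \Phi_d(d(x_n,x_N),d(x_N,x_m)) \le \Phi_d(\varepsilon,\varepsilon)$ for $n,m \ge N$. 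The delicate point, and the reason the regular property is the natural hypothesis here, is to organize the thresholds so that every invocation of $\Phi_d$ has both of its arguments small enough for continuity at the origin to apply — a priori $\Phi_d$ need not be continuous anywhere else — and this bookkeeping is where the argument of \cite{BessenyeiPales2017} does its real work.

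Once $(x_n)_n$ is Cauchy, completeness of $(X,d)$ yields $x^\ast \in X$ with $x_n \to x^\ast$. From $d(x_{n+1},Tx^\ast) = d(Tx_n,Tx^\ast) \le \phi(d(x_n,x^\ast))$ together with $\lim_{t\to 0^+}\phi(t)=0$ we get $x_{n+1} \to Tx^\ast$; since also $x_{n+1} \to x^\ast$, the inequality $d(x^\ast,Tx^\ast) \le \Phi_d(d(x^\ast,x_{n+1}),d(x_{n+1},Tx^\ast))$ and the continuity of $\Phi_d$ at $(0,0)$ force $d(x^\ast,Tx^\ast)=0$, i.e.\ $Tx^\ast = x^\ast$. (This also records the fact, implicit in the regular property, that limits of convergent sequences are unique.) Together with the uniqueness noted at the outset, this proves the theorem.
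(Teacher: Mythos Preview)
Your overall architecture --- uniqueness from $\phi(t)<t$, Picard iteration, Cauchy sequence, completeness, and identification of the limit as a fixed point via $\Phi_d$ --- matches the paper, and the uniqueness and limit steps are correct. The gap is in the Cauchy step: the single-step Matkowski induction you propose does not close under the regular-semi-metric hypotheses, and the missing ingredient is not merely bookkeeping.

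Concretely, your inductive step yields
\[
d(x_N, x_{N+k+1}) \;\le\; \Phi_d\bigl(d(x_N, x_{N+1}),\, \phi(d(x_N, x_{N+k}))\bigr) \;\le\; \Phi_d\bigl(\phi^N(c),\, \phi(\varepsilon)\bigr),
\]
and you need this to be $<\varepsilon$. Regularity supplies, for the given $\varepsilon$, a threshold $\delta=\delta(\varepsilon)\in(0,\varepsilon)$ with $\Phi_d(u,v)<\varepsilon$ whenever $u,v\le\delta$. You can force $\phi^N(c)\le\delta$ by enlarging $N$, but the second argument $\phi(\varepsilon)$ is fixed once $\varepsilon$ is chosen: all you know is $\phi(\varepsilon)<\varepsilon$, and nothing forces $\phi(\varepsilon)\le\delta(\varepsilon)$. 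Since $\Phi_d$ is only assumed continuous at the origin, you have no control over $\Phi_d(\,\cdot\,,\phi(\varepsilon))$ when $\phi(\varepsilon)$ sits above $\delta(\varepsilon)$. In the metric case this step goes through because $\Phi_d(u,v)\le u+v$, so it suffices to push the first argument below $\varepsilon-\phi(\varepsilon)$; that additive structure is exactly what is unavailable here.

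The paper repairs this with a genuinely additional idea. Rather than showing that $T$ maps a ball $B(x_N,\varepsilon)$ into itself, one first fixes an integer $n(\varepsilon)$ with $\phi^{n(\varepsilon)}(\varepsilon)<\delta(\varepsilon)$ --- which exists because $\phi^n(\varepsilon)\to 0$ --- and shows that the \emph{block iterate} $T^{n(\varepsilon)}$ maps $B(x_{N},\varepsilon)$ into itself for a suitably large $N=N(\varepsilon)$. The point is that after $n(\varepsilon)$ applications the contraction has shrunk $\varepsilon$ all the way below $\delta(\varepsilon)$, so \emph{both} arguments of $\Phi_d$ now lie in the region where regularity bites. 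The intermediate iterates $T^i y$ for $0\le i<n(\varepsilon)$ are then handled separately and shown to lie in a ball of radius controlled by $\Phi_d(\varepsilon,\varepsilon)$, and the final Cauchy bound becomes $\Phi_d(\Phi_d(\varepsilon,\varepsilon),\Phi_d(\varepsilon,\varepsilon))$. So the ``bookkeeping'' you defer to \cite{BessenyeiPales2017} is in fact a structural change to the induction --- iterating in blocks of length $n(\varepsilon)$ rather than single steps --- and your sketch does not yet contain it.
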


We give a proof of this assertion, because in the sequel we use the following argument to establish an assertion. 
We adopt the strategy of the proof of \cite[Theorem 1]{BessenyeiPales2017} with minor changes.

\begin{proof}
We show the uniqueness. 
Assume that $Tx=x$ and $Ty=y$. 
Since $T$ is a $\phi$-contraction, 
$d(x,y) = d(Tx, Ty) \le \phi(d(x,y))$. 
If $d(x,y) > 0$, then, $d(x,y) \le \phi(d(x,y)) < d(x,y)$ and this is a contradiction. 
Hence, $d(x,y) = 0$. 
By the positivity of the semi-metric $d$, 
$x=y$. 

We show the existence. 
Take a point $x_0 \in X$. 
Let $x_n := T^n x_0$ for $n \ge 1$. 
We show that $(x_n)_n$ is a Cauchy sequence, that is, $\lim_{m, n \to \infty} d(x_n, x_m) = 0$. 

Let $\epsilon > 0$. 
Then, there exists $\delta(\epsilon) \in (0,\epsilon)$ such that $\Phi(u,v) < \epsilon$ for every $u, v \in [0,\delta(\epsilon)]$. 

For every $n \ge 0$, 
$d(x_{n+1}, x_{n}) \le \phi^n (d(x_1, x_0))$. 
By the assumption for $\phi$, 
$\lim_{n \to \infty} d(x_{n+1}, x_{n}) = 0$. 
Then, there exists $n(\epsilon)$ such that $\phi^{n(\epsilon)}(\epsilon) < \delta(\epsilon)$. 
There also exists $N(\epsilon)$ such that $\phi^{N(\epsilon)}\left(\max_{1 \le i \le n(\epsilon)} d(x_{i}, x_0)\right) < \delta(\epsilon)$. 

Let $y \in B(x_{N(\epsilon)}, \epsilon)$. 
Then, 
\[ d\left(T^{n(\epsilon)} y, x_{N(\epsilon)}\right) \le \Phi_d \left( d(T^{n(\epsilon)} y, x_{N(\epsilon) + n(\epsilon)}), d(x_{N(\epsilon) + n(\epsilon)}, x_{N(\epsilon)}) \right)\]
\[  \le \Phi_d \left(\phi^{n(\epsilon)}(d(y, x_{N(\epsilon)})),  \phi^{N(\epsilon)}(d(x_{n(\epsilon)}, x_0))\right)   \le \Phi_d (\delta(\epsilon), \delta(\epsilon)) < \epsilon. \]
Hence, $T^{n(\epsilon)} (B(x_{N(\epsilon)}, \epsilon) ) \subset B(x_{N(\epsilon)}, \epsilon)$. 
Thus, for every $k \ge n(\epsilon)$, 
$$x_{k+N(\epsilon)} \in T^k  (B(x_{N(\epsilon)}, \epsilon) ) \subset \bigcup_{i=0}^{n(\epsilon)-1} T^i (B(x_{N(\epsilon)}, \epsilon)).$$

In the same manner, for $0 \le i \le n(\epsilon)-1$, 
\[ d\left(T^{i} y, x_{N(\epsilon)}\right) \le \Phi_d \left( d(T^{i} y, x_{N(\epsilon) + i}), d(x_{N(\epsilon) + i}, x_{N(\epsilon)}) \right)\]
\[  \le \Phi_d \left(\phi^{i}(d(y, x_{N(\epsilon)})),  \phi^{N(\epsilon)}(d(x_{i}, x_0))\right)   \le \Phi_d (\epsilon, \epsilon). \]
Hence, 
\[ \bigcup_{i=0}^{n(\epsilon)-1} T^i (B(x_{N(\epsilon)}, \epsilon)) \subset B\left(x_{N(\epsilon)}, \Phi_d (\epsilon, \epsilon) \right). \]

So, for $i, j \ge N(\epsilon) + n(\epsilon)$, 
\[ d(x_i, x_j) \le \Phi_d \left(d(x_i, x_{N(\epsilon)}), d(x_j, x_{N(\epsilon)})\right) \le \Phi_d (\Phi_d (\epsilon, \epsilon), \Phi_d (\epsilon, \epsilon)) \to 0, \epsilon \to +0. \]
Thus we see that $(x_n)_n$ is a Cauchy sequence.  

By the completeness, we have the limit $x_{\infty} \in X$ such that $\lim_{n \to \infty} d(x_n, x_{\infty}) = 0$. 
Therefore, 
\[ d(Tx_{\infty}, x_{\infty}) \le \Phi_d (d(Tx_{\infty}, x_{n+1}), d(x_{n+1}, x_{\infty})) = \Phi_d (d(Tx_{\infty}, Tx_{n}), d(x_{n+1}, x_{\infty})) \]
\[ \le \Phi_d (d(x_n, x_{\infty}), d(x_{n+1}, x_{\infty})) \to 0, \ n \to \infty. \]

Hence, $d(Tx_{\infty}, x_{\infty}) = 0$. 
So, by the positivity of the semi-metric $d$, $Tx_{\infty} = x_{\infty}$. 
\end{proof}

We remark that we do not need the positivity of $d$, specifically, $d(x,y) = 0 \Rightarrow x = y$, which is not used in the proof. 
The definition of $\phi$-contraction is also valid for symmetric, pre-metric spaces. 
By Lemma \ref{lem:conseq-normal-semi} and the proof of Theorem \ref{thm:Bessenyei-Pales-FP}, 
we see that 
\begin{Lem}\label{lem:Cauchy-bounded-normal-regular}
Let $(X,d)$ be a symmetric pre-metric space satisfying the regular and normal properties. 
Let $\phi$ be a comparison function. 
Let a map $T : X  \to X$ be a $\phi$-contraction. 
Then, for every $x \in X$, a sequence $(T^n x)_n$ is Cauchy and bounded. 
\end{Lem}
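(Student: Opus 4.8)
The statement combines two facts: that $(T^n x)_n$ is Cauchy, and that it is bounded. The plan is to obtain the Cauchy property by rerunning the existence part of the proof of Theorem \ref{thm:Bessenyei-Pales-FP} verbatim, and then to deduce boundedness immediately from Lemma \ref{lem:conseq-normal-semi}(i). The only thing that needs checking is that the argument for the Cauchy property in Theorem \ref{thm:Bessenyei-Pales-FP} does not secretly use the positivity axiom of the semi-metric.

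First I would observe that the construction $x_0 \in X$, $x_n := T^n x_0$, and the whole chain of estimates in the existence proof of Theorem \ref{thm:Bessenyei-Pales-FP} use only: that $\phi$ is a comparison function (so $\phi^n(t) \to 0$, giving $\delta(\epsilon)$, $n(\epsilon)$, $N(\epsilon)$), that $T$ is a $\phi$-contraction (so $d(T^k y, x_{N+k}) \le \phi^k(d(y,x_N))$ and $d(x_{n+1},x_n) \le \phi^n(d(x_1,x_0))$), and the coordinate-wise monotonicity together with continuity at $(0,0)$ of $\Phi_d$ (the regular property). None of these invoke positivity. As the remark following that proof already notes explicitly, ``we do not need the positivity of $d$''. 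Hence, for a symmetric pre-metric space satisfying the regular property, the same computation shows $d(x_i,x_j) \le \Phi_d(\Phi_d(\epsilon,\epsilon),\Phi_d(\epsilon,\epsilon)) \to 0$ as $\epsilon \to +0$, so $(T^n x_0)_n$ is a Cauchy sequence. (The normal property is not even needed for this half, but it is part of the hypotheses and harmless.)

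Next, boundedness: once $(T^n x)_n$ is known to be a Cauchy sequence and $(X,d)$ is a normal symmetric pre-metric space, Lemma \ref{lem:conseq-normal-semi}(i) gives that it is bounded. Strictly speaking that lemma is stated for normal semi-metric spaces; but its proof uses only the monotonicity and normality of $\Phi_d$ and the triangle-function bound $d(x_i,x_j)\le \Phi_d(d(x_i,x_0),d(x_j,x_0))$, so the same proof applies verbatim in the symmetric pre-metric setting. I would therefore either cite Lemma \ref{lem:conseq-normal-semi}(i) directly, noting that positivity is not used in its proof, or reproduce the two-line argument: pick $N$ with $d(x_m,x_n)\le 1$ for $m,n\ge N$, bound $d(x_i,x_0)\le\Phi_d(1,d(x_N,x_0))$ for $i\ge N$, and take the finite maximum over $i<N$ to conclude $\sup_i d(x_i,x_0)<\infty$, hence (by normality and monotonicity of $\Phi_d$) $\sup_{i,j} d(x_i,x_j)<\infty$.

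I do not expect any real obstacle here; the content of the lemma is precisely that the two already-proven arguments (the Cauchy estimate inside Theorem \ref{thm:Bessenyei-Pales-FP} and Lemma \ref{lem:conseq-normal-semi}(i)) survive the removal of positivity, which the preceding remark has anticipated. The only point requiring a sentence of care is making explicit that each cited step is positivity-free, so the reader sees why the weakening of the hypotheses is legitimate.
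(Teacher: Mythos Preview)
Your proposal is correct and matches the paper's own approach exactly: the paper states the lemma immediately after the remark that positivity is not used in the proof of Theorem~\ref{thm:Bessenyei-Pales-FP}, and justifies it in one line by citing Lemma~\ref{lem:conseq-normal-semi} together with that proof. Your write-up is simply a more explicit unpacking of why each cited ingredient is positivity-free, which is precisely what the paper intends the reader to check.
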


\subsection{Kuratowski measure of non-compactness}\label{subsec:Kuratowski}

Let $(X, d)$ be a semi-metric space. 
For $H \subset X$, let 
\[ \chi(H) := \inf\left\{r > 0 | \textup{ there exist }  n \ge 1,  x_1, \dots, x_n \in X \textup{ such that  } H \subset \cup_{i=1}^{n} B(x_i, r) \right\}, \]
where we let $\inf \emptyset := \infty$. 
We call this the {\it Kuratowski measure of non-compactness} of $H$. 
For metric spaces, this notion was introduced in \cite{Kuratowski1930}, 
and, the monograph by Granas and Dugundji \cite{GranasDugundji2003} gives properties of the Kuratowski measure of non-compactness.

By definition, $\chi(H) < \infty$ if $H$ is bounded. 
It holds that 
\begin{equation}\label{eq:Kuratowski-union}
\chi(H_1 \cup H_2) \le \max\{\chi(H_1), \chi(H_2)\}, \ \ H_1, H_2 \subset X.
\end{equation} 

\begin{Lem}[Cf.{\cite[Lemma 4]{BessenyeiPenzes2022}}]\label{lem:condense}
Let $\phi$ be a right-continuous comparison function. 
Let $f$ be a  $\phi$-contraction on $X$. 
Let $H \subset X$ and $0 < \chi(H) < \infty$. 
Then, $\chi(f(H)) < \chi(H)$. 
\end{Lem}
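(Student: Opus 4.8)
The plan is to establish the quantitative estimate $\chi(f(H)) \le \phi(\chi(H))$ and then to conclude by Lemma \ref{lem:comparison-basic}(i): since $0 < \chi(H) < \infty$, that lemma gives $\phi(\chi(H)) < \chi(H)$, whence $\chi(f(H)) \le \phi(\chi(H)) < \chi(H)$, which is the assertion.

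To obtain the estimate I set $c := \chi(H)$ and fix an arbitrary $r > c$. Because $\chi(H) < \infty$, the definition of the Kuratowski measure of non-compactness provides $n \ge 1$ and points $x_1, \dots, x_n \in X$ with $H \subset \bigcup_{i=1}^{n} B(x_i, r)$. Then $f(H) = \bigcup_{i=1}^{n} f\bigl(H \cap B(x_i, r)\bigr)$, and for each $i$ and each $y \in H \cap B(x_i, r)$ one has $d(y, x_i) < r$, so, using that $\phi$ is increasing and $f$ is a $\phi$-contraction, $d(f(y), f(x_i)) \le \phi(d(y, x_i)) \le \phi(r)$. Hence, for every $\delta > 0$, $f(H) \subset \bigcup_{i=1}^{n} B(f(x_i), \phi(r) + \delta)$, which yields $\chi(f(H)) \le \phi(r) + \delta$; letting $\delta \downarrow 0$ gives $\chi(f(H)) \le \phi(r)$. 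As $r > c$ was arbitrary, $\chi(f(H)) \le \inf_{r > c} \phi(r)$; since $\phi$ is increasing this infimum equals $\lim_{r \downarrow c} \phi(r)$, and since $\phi$ is right-continuous this limit equals $\phi(c) = \phi(\chi(H))$, as claimed.

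The argument is short and I do not anticipate a serious obstacle. The two points requiring a little care are: (i) passing from the ``closed'' inequality $d(f(y), f(x_i)) \le \phi(r)$ to a covering of $f(H)$ by \emph{open} balls, which is why the auxiliary parameter $\delta > 0$ is introduced and then sent to $0$; and (ii) the use of right-continuity of $\phi$ precisely at the value $c = \chi(H)$ --- monotonicity alone would only give $\chi(f(H)) \le \inf_{r>c}\phi(r)$, which could in principle exceed $\phi(\chi(H))$ if $\phi$ had an upward jump at $c$, so this is exactly where the right-continuity hypothesis enters. Finiteness of $\chi(H)$ is used to guarantee the existence of the finite cover, and positivity of $\chi(H)$ is used only at the end, via $\phi(t) < t$ for $t > 0$.
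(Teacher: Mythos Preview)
Your proof is correct and follows essentially the same route as the paper's own argument: cover $H$ by finitely many balls of radius $r > \chi(H)$, push forward to a cover of $f(H)$ by balls of radius $\phi(r)+\epsilon$, let $\epsilon \downarrow 0$ and then $r \downarrow \chi(H)$ using right-continuity, and finish with $\phi(t)<t$ for $t>0$. Your write-up is, if anything, more explicit than the paper's about where right-continuity and positivity of $\chi(H)$ are actually used.
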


This assertion means that a $\phi$-contraction is a {\it condensing} map. 

\begin{proof}
If $\chi(H) < r$, then, $H \subset \cup_{i=1}^{n} B(x_i, r)$. 
Then, for every $\epsilon > 0$, 
$f(H) \subset \cup_{i=1}^{n} B(f(x_i), \phi(r)+\epsilon)$. 
Hence, $\chi(f(H)) \le \phi(r)$. 
Since $\phi$ is right-continuous and $\chi(H) > 0$, 
$\chi(f(H)) \le \phi (\chi(H)) < \chi(H)$. 
\end{proof}

We will use the following lemma. 
\begin{Lem}[{\cite[Lemma 2]{BessenyeiPenzes2022}}]\label{lem:zero-Kuratowski-implies-rel-cpt}
Assume that $(X,d)$ is regular. 
Let $H \subset X$. 
Then, $\chi(H) = 0$ if and only if $H$ is a relatively compact subset in $X$. 
\end{Lem}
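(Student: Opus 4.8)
The plan is to prove both implications separately, using as the sole substitute for the triangle inequality the inequality $d(x,y)\le \Phi_d\bigl(d(x,z),d(y,z)\bigr)$, valid for all $x,y,z\in X$ (take $z$ itself as the witness point in the definition of $\mathcal I$), together with the coordinate-wise monotonicity of $\Phi_d$ and, crucially, its continuity at $(0,0)$ guaranteed by the regular property. Two elementary observations will be used repeatedly: first, the balls $B(x,1/n)$ form a countable neighbourhood basis at each $x$ (every open set containing $x$ contains some $B(x,r)$ by definition of the topology), so $X$ is first countable; second, a $d$-convergent sequence is automatically Cauchy, since $z_j\to x$ forces $d(z_j,z_l)\le\Phi_d(d(z_j,x),d(z_l,x))\to\Phi_d(0,0)=0$. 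The forward implication will not need completeness, whereas the converse will use that $X$ is complete (the standing hypothesis of the paper).

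For the direction $\overline{H}$ compact $\Rightarrow \chi(H)=0$ I would argue by contradiction. Since $\overline{H}$ is compact and $X$ is first countable, $\overline{H}$ is sequentially compact. If $\chi(H)>0$, fix $r$ with $0<r<\chi(H)$; then no finite family of $r$-balls covers $\overline{H}$, so I can choose points $x_1,x_2,\dots\in\overline{H}$ greedily with $x_{n+1}\notin\bigcup_{i\le n}B(x_i,r)$, which yields $d(x_i,x_j)\ge r$ for all $i\ne j$. Such a sequence has no Cauchy subsequence, hence by the observation above no $d$-convergent subsequence, contradicting sequential compactness. Therefore $\chi(H)=0$.

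For the converse $\chi(H)=0\Rightarrow\overline{H}$ compact I would adapt the classical proof that a complete, totally bounded space is compact, replacing the triangle inequality by $\Phi_d$. First one checks that $\chi(\overline{H})=0$ as well, using monotonicity and continuity of $\Phi_d$ at the origin to pass the finite $r$-covers of $H$ to finite covers of $\overline{H}$ by slightly larger balls. Assume, for contradiction, an open cover $\{U_\alpha\}$ of $\overline{H}$ with no finite subcover. Using $\chi(\overline{H})=0$ at the scales $2^{-k}$, I construct a nested sequence of balls $B(c_k,2^{-k})$ such that each portion $S_k:=\overline{H}\cap\bigcap_{j\le k}B(c_j,2^{-j})$ admits no finite subcover, and I pick $p_k\in S_k$. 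Points $p_k,p_l$ with $k,l\ge j$ lie in $B(c_j,2^{-j})$, so $d(p_k,p_l)\le\Phi_d(2^{-j},2^{-j})\to0$; thus $(p_k)$ is Cauchy and, by completeness, converges to some $x$, which lies in the closed set $\overline{H}$ and hence in some $U_{\alpha_0}\supset B(x,s)$.

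The crux, and the step I expect to be the main obstacle, is to convert this containment into a contradiction despite the fact that balls need not be open and the triangle inequality is unavailable. Here I would estimate, for $w\in B(c_k,2^{-k})$, that $d(x,w)\le\Phi_d(d(x,c_k),2^{-k})$, where $d(x,c_k)\le\Phi_d(d(x,p_k),2^{-k})\to0$; since both arguments tend to $0$, continuity of $\Phi_d$ at $(0,0)$ gives $\Phi_d(d(x,c_k),2^{-k})<s$ for all large $k$, whence $B(c_k,2^{-k})\subset B(x,s)\subset U_{\alpha_0}$. This exhibits a single set of the cover containing $S_k$, contradicting the choice of $S_k$ and completing the proof. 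The delicate point throughout is that regularity only supplies continuity of $\Phi_d$ at the origin, so every comparison must be arranged so that all distances involved shrink to $0$ simultaneously; this is exactly why the argument is organized around balls of radius $2^{-k}\to0$ centred near the limit $x$.
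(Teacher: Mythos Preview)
The paper does not supply its own proof of this lemma: it is cited from \cite[Lemma~2]{BessenyeiPenzes2022} and invoked as a black box in Section~\ref{sec:proof}, so there is nothing in the present paper to compare your argument against.

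Your argument is nonetheless correct. You rightly note that the reverse implication needs completeness (absent from the lemma's stated hypotheses but a standing assumption in the section where it is applied); without it the implication already fails for metric spaces. The one point you pass over a little quickly is first countability: the observation that every open neighbourhood of $x$ contains some $B(x,r)$ gives only half of what is required; one must also check that each ball $B(x,r)$ is itself a neighbourhood of $x$. This does follow from regularity, since the set of points $y$ admitting some $\delta>0$ with $B(y,\delta)\subset B(x,r)$ is open (use $\Phi_d(\delta',\delta'')<\delta$ for small $\delta',\delta''$) and contains $x$. Once that is secured, both your passage from compactness to sequential compactness in the forward direction and your transfer of $\chi(H)=0$ to $\chi(\overline{H})=0$ via the sequential description of the closure are justified, and the nested-ball construction for the converse is a correct transcription of the classical ``totally bounded plus complete implies compact'' proof, with $\Phi_d$ standing in for the triangle inequality exactly as you describe.
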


\subsection{Hausdorff-Pompeiu distance}

Let $(X,d)$ be a normal, regular semi-metric space. 
Let $\mathcal{F}(X)$ be the set of non-empty bounded subsets of $X$. 
We do not require that the subsets are closed or compact. 

Let 
\[ d_{HP}(A,B) := \inf\{r > 0 | A \subset B(r), B \subset A(r) \}, \ A, B \in \mathcal{F}(X), \]
where $\inf \emptyset := \infty$. 
This definition is the same as the definition of Hausdorff-Pompeiu distance \cite{Pompeiu1905, Hausdorff1949} for metric spaces. 

\begin{Lem}\label{lem:HP-normal-and-regular}
(i) $d_{HP}$ is a symmetric pre-metric on $\mathcal{F}(X)$. \\
(ii) $(\mathcal{F}(X), d_{HP})$ satisfies the normal property and the regular property. 
\end{Lem}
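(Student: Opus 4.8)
The plan is to verify the two parts separately, handling (i) by direct unwinding of the definitions and (ii) by reducing the basic triangle function of $d_{HP}$ to that of $d$. For part (i), symmetry of $d_{HP}$ is immediate from the symmetric form of the defining set $\{r>0 : A\subset B(r),\, B\subset A(r)\}$. The condition $d_{HP}(A,A)=0$ follows because for every $r>0$ we have $A\subset A(r)$ (since $x\in B(x,r)$ for all $x$), so every positive $r$ lies in the defining set and the infimum is $0$. Finiteness, i.e.\ that $d_{HP}(A,B)<\infty$ for $A,B\in\mathcal F(X)$, is where the normal property of $(X,d)$ enters: by Lemma \ref{lem:conseq-normal-semi}(iii) the set $A\cup B$ is bounded, hence has finite diameter by Lemma \ref{lem:bounded-basic}, and taking $r>\operatorname{diam}(A\cup B)$ gives $A\subset B(r)$ and $B\subset A(r)$. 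So $d_{HP}$ is a well-defined symmetric pre-metric on $\mathcal F(X)$; note we do \emph{not} claim positivity, which is why the target space is a pre-metric space (e.g.\ a bounded set and its closure have $d_{HP}$-distance zero).

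For part (ii), I would work with $\Phi_{d_{HP}}$ and show it is controlled by $\Phi_d$. The key geometric step is the inclusion-monotonicity fact: if $A\subset B(u)$ then $A(r)\subset B(\Phi_d(u,r)+\varepsilon)$ for arbitrarily small $\varepsilon>0$, or more cleanly $A(r)\subset B(\Phi_d(u,r))$ up to the usual $\varepsilon$-slack coming from strict inequalities in the definition of $B(\cdot,\cdot)$. This is essentially the computation already used in the proof of Lemma \ref{lem:conseq-normal-semi}(ii): a point $y\in A(r)$ satisfies $d(y,a)<r$ for some $a\in A\subset B(u)$, hence $d(b_0,a)\le u$ for a witnessing $b_0\in B$, and the coordinate-wise monotonicity of $\Phi_d$ gives $d(y,b_0)\le\Phi_d(u,r)$. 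From this I would deduce that if $d_{HP}(A,C)\le u$ and $d_{HP}(B,C)\le v$ then $A\subset C(u)\subset$ (a $\Phi_d(u,v)$-neighborhood related expression of $B$), and symmetrically, yielding a bound of the shape $d_{HP}(A,B)\le \Phi_d(u,v)$, i.e.\ $\Phi_{d_{HP}}(u,v)\le\Phi_d(u,v)$. Once that inequality is in hand, the normal property of $d_{HP}$ follows from the normal property of $d$ (finiteness is inherited), and the regular property follows because $\Phi_d$ is continuous at $(0,0)$ with $\Phi_d(0,0)=0$, so $\Phi_{d_{HP}}(u,v)\to 0$ as $(u,v)\to(0,0)$ as well.

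The main obstacle I anticipate is bookkeeping the $\varepsilon$-slack correctly: the neighborhoods $B(x,r)$ are defined with a \emph{strict} inequality and need not be open, and $d_{HP}$ is defined via an infimum over $r$ with non-strict containment conditions, so the chain of inclusions $A\subset C(u)$, $C\subset B(v)$, $C(u)\subset B(\Phi_d(u,v)+\varepsilon)$ must be assembled so that the final radius can be taken to be $\Phi_d(u,v)$ in the infimum (or $\Phi_d(u,v)+\varepsilon$ for every $\varepsilon>0$, which is equivalent after taking infimum). I would phrase the neighborhood-monotonicity lemma with an explicit $+\varepsilon$ and then let $\varepsilon\to 0$ at the end. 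A secondary minor point is that one must check $A(u)$ and its enlargements stay inside $\mathcal F(X)$ (they are bounded by Lemma \ref{lem:conseq-normal-semi}(ii)), so that $d_{HP}$ is even defined on the relevant pairs; this is routine given the normal property.
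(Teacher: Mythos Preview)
Your proposal is correct and follows essentially the same route as the paper: for (i) you verify symmetry, reflexivity, and finiteness (the paper bounds $d_{HP}(A,B)$ via a direct $\Phi_d$ estimate rather than via $\operatorname{diam}(A\cup B)$, but either works), and for (ii) you reduce $\Phi_{d_{HP}}$ to $\Phi_d$ through the same neighborhood-chaining step. One small caution: the clean inequality $\Phi_{d_{HP}}(u,v)\le\Phi_d(u,v)$ need not hold without right-continuity of $\Phi_d$, so ``letting $\varepsilon\to 0$'' will not recover it; the paper (consistent with your own $\varepsilon$-slack discussion) stops at $\Phi_{d_{HP}}(u,v)\le\Phi_d(u+\varepsilon,v+\varepsilon)$ for every $\varepsilon>0$, which is the inequality you should target and which already suffices for both normality and regularity.
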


We denote the basic triangle function of $d_{HP}$ by $\Phi_{d_{HP}}$. 

\begin{proof}
(i) By definition, $d_{HP}$ is symmetric and non-negative, specifically, $d_{HP}(A,B) = d_{HP}(B,A) \ge 0$. 
Furthermore, $d_{HP}(A,A) = 0$. 

Now it suffices to show that $d_{HP}(A,B) < \infty$. 
By symmetry, it suffices to show that for some $r > 0$, $A \subset B(r)$. 
Since $A, B$ are non-empty, we can take $x_0 \in A$ and $y_0 \in B$. 
Since $A$ is bounded, by Lemma \ref{lem:bounded-basic}, $\textup{diam}(A) < \infty$. 
Let $x \in A$. 
Then,  
$d(x,y_0) \le \Phi_d (d(x,x_0), d(x_0, y_0)) \le \Phi_d (\textup{diam}(A), d(x_0, y_0))$. 
By the normal property for $d$, $r_0 :=  \Phi_d (\textup{diam}(A), d(x_0, y_0)) < \infty$. 
Hence, $A \subset B(y_0, r_0 +1) \subset B(r_0 +1)$. 

(ii) Step 1. We first show that if $A, B, C \in \mathcal{F}(X)$ and $A \subset B(r_1)$ and $B \subset C(r_2)$, then, 
for every $\epsilon > 0$, 
$A \subset C(\Phi_d (r_1, r_2) + \epsilon)$. 

Let $x \in A$. 
Then, there exist $y \in B$ and $z \in C$ such that $d(x,y) < r_1$ and $d(y,z) < r_2$. 
Then, 
$d(x,z) \le \Phi_d (d(x,y), d(y,z)) \le \Phi_d (r_1, r_2)$. 
Hence, $x \in B(z, \Phi_d (r_1, r_2) + \epsilon) \subset C(\Phi_d (r_1, r_2) + \epsilon)$ for every $\epsilon > 0$. 

Step 2. Assume that $A, B, C \in \mathcal{F}(X)$, $r_1 > d_{HP}(A,B)$ and $r_2 > d_{HP}(B,C)$. 
Then, by Step 1, 
$d_{HP}(A,C) \le \Phi_d (r_1, r_2) + \epsilon$ for every $\epsilon > 0$, and hence, $d_{HP}(A,C) \le \Phi_d (r_1, r_2)$. 
Thus, 
for every $u, v \ge 0, \epsilon > 0$, 
$\Phi_{d_{HP}}(u, v) \le \Phi_d (u+\epsilon, v+\epsilon)$. 
By the normal property for $d$, $\Phi_{d_{HP}} (u, v)  \le \Phi_d (u+\epsilon, v+\epsilon) < \infty$.  
Hence $d_{HP}$ satisfies the normal property. 

Step 3. Let $\epsilon > 0$. 
Since $d$ satisfies the regular property, there exists $\delta > 0$ such that for every $u, v \in [0,\delta]$, $\Phi_d (u,v) < \epsilon$. 
Hence, 
for every $u, v \in [0,\delta/2]$, 
$\Phi_{d_{HP}}(u, v) \le \Phi_d (u+\delta/2, v+\delta/2) < \epsilon$. 
Thus we see that $\Phi_{d_{HP}}$ is also continuous at $(0,0)$. 
Hence $d_{HP}$ satisfies the regular property. 
\end{proof}

\begin{Lem}\label{lem:weak-contraction-semimet-HP}
Let $(X,d)$ be a semi-metric space. 
Let $\phi_i, 1 \le i \le n$, be right-continuous comparison functions on $X$.  
Let $\phi := \max_{1 \le i \le n} \phi_i$. 
Let $f_i$ be a $\phi_i$-contraction on $X$.  
Then, $\bigcup_{i=1}^{n} f_i ( \mathcal{F}(X)) \subset  \mathcal{F}(X)$,  
\[ d_{HP}\left( \bigcup_{i=1}^{n} f_i (A), \bigcup_{i=1}^{n} f_i (B) \right) \le \phi\left( d_{HP}(A, B) \right), \ A, B \in \mathcal{F}(X), \]
and $\phi$ is a right-continuous comparison function. 
\end{Lem}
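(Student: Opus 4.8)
The plan is to dispose of the three assertions in order, handling first the two structural facts and then the contraction estimate, which is the real content.

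First I would check that $\phi = \max_{1\le i\le n}\phi_i$ is a right-continuous comparison function. For $n=1$ this is trivial, and the case $n=2$ is exactly Lemma \ref{lem:comparison-basic}(ii); writing $\phi = \max\{\max_{1\le i\le n-1}\phi_i,\ \phi_n\}$ and combining the inductive hypothesis with Lemma \ref{lem:comparison-basic}(ii) closes an induction on $n$. Next I would verify that $\bigcup_{i=1}^{n}f_i(A)\in\mathcal F(X)$ for every $A\in\mathcal F(X)$ (this is the meaning of the inclusion $\bigcup_{i=1}^n f_i(\mathcal F(X))\subset\mathcal F(X)$, i.e. that $A\mapsto\bigcup_i f_i(A)$ maps $\mathcal F(X)$ into itself). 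Non-emptiness is clear. For boundedness, since $(X,d)$ is normal, Lemma \ref{lem:bounded-basic} gives $\textup{diam}(A)<\infty$; because $d(f_i(x),f_i(y))\le\phi_i(d(x,y))\le\phi_i(\textup{diam}(A))<\infty$ for all $x,y\in A$ (using that $\phi_i$ is increasing), each $f_i(A)$ has finite diameter and is bounded, again by Lemma \ref{lem:bounded-basic}. A finite union of bounded sets is bounded by iterating Lemma \ref{lem:conseq-normal-semi}(iii), so $\bigcup_{i=1}^n f_i(A)\in\mathcal F(X)$; in particular, by Lemma \ref{lem:HP-normal-and-regular}(i), $d_{HP}$ is finite on the pairs appearing in the asserted inequality, so that inequality is meaningful.

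The main step is the estimate. Fix $A,B\in\mathcal F(X)$ and let $r>d_{HP}(A,B)$, so that $A\subset B(r)$ and $B\subset A(r)$. Given $x\in A$, choose $y\in B$ with $d(x,y)<r$; then for each $i$, since $\phi_i$ is increasing, $d(f_i(x),f_i(y))\le\phi_i(d(x,y))\le\phi_i(r)\le\phi(r)$, hence $f_i(x)\in B(f_i(y),\phi(r)+\varepsilon)$ for every $\varepsilon>0$, with $f_i(y)\in\bigcup_j f_j(B)$. Thus $\bigcup_i f_i(A)\subset\bigl(\bigcup_j f_j(B)\bigr)(\phi(r)+\varepsilon)$, and symmetrically with $A$ and $B$ exchanged, so $d_{HP}\bigl(\bigcup_i f_i(A),\bigcup_i f_i(B)\bigr)\le\phi(r)+\varepsilon$ for all $\varepsilon>0$, whence $\le\phi(r)$. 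Letting $r\downarrow d_{HP}(A,B)$ and using the monotonicity and right-continuity of $\phi$ established in the first step, we obtain $d_{HP}\bigl(\bigcup_i f_i(A),\bigcup_i f_i(B)\bigr)\le\phi(d_{HP}(A,B))$.

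I do not anticipate a genuine obstacle; the one point that needs care is the passage from the strict inequality $r>d_{HP}(A,B)$ to the value $\phi(d_{HP}(A,B))$, which is precisely where right-continuity of $\phi$ (and not merely monotonicity) enters — the same mechanism as in Lemma \ref{lem:condense}. A minor bookkeeping point is that $H(r)$ is defined via open balls, which is why the auxiliary $\varepsilon>0$ appears before being removed.
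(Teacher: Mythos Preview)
Your proof is correct and follows essentially the same approach as the paper. The only cosmetic difference is that the paper first invokes the elementary inequality $d_{HP}\bigl(\bigcup_i f_i(A),\bigcup_i f_i(B)\bigr)\le\max_i d_{HP}(f_i(A),f_i(B))$ and then bounds each term using right-continuity of the individual $\phi_i$, whereas you bound the union directly and use right-continuity of $\phi$; the underlying $\varepsilon$-neighborhood argument and the passage $r\downarrow d_{HP}(A,B)$ are identical.
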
 

\begin{proof}
Let $A \in \mathcal{F}(X)$. 
Then, by Lemma \ref{lem:bounded-basic}, $\textup{diam}(A) < \infty$. 
Let $x, y \in A$. 
Then, 
\begin{equation}\label{eq:each-f-contract}
d(f_i (x), f_i (y)) \le \phi_i (d(x,y)).
\end{equation}
Hence, $\textup{diam}(f_i (A)) \le  \phi_i (\textup{diam}(A)) < \infty$. 
By Lemma \ref{lem:bounded-basic}, $f_i (A) \in \mathcal{F}(X)$. 
By Lemma \ref{lem:conseq-normal-semi} (iii), 
$\bigcup_{i=1}^{n} f_i ( \mathcal{F}(X)) \subset  \mathcal{F}(X)$. 

By the definition of $d_{HP}$, 
\[ d_{HP}\left( \bigcup_{i=1}^{n} f_i (A), \bigcup_{i=1}^{n} f_i (B) \right) \le \max_{1 \le i \le n}  d_{HP}\left( f_i (A), f_i (B) \right). \]
By \eqref{eq:each-f-contract}, if $A \subset B(r)$, then, $f_i (A) \subset f_i (B) (\phi_i (r) + \epsilon)$ for every $\epsilon > 0$.  
Hence, for every $r > d_{HP}(A,B)$, $d_{HP}\left( f_i (A), f_i (B) \right) \le \phi_i (r)$. 
By the right-continuity of $\phi_i$, 
$d_{HP}\left( f_i (A), f_i (B) \right) \le \phi_i ( d_{HP}(A,B) )$. 
Hence, 
\[  \max_{1 \le i \le n}  d_{HP}\left( f_i (A), f_i (B) \right) \le \phi (d_{HP}(A,B)). \]
By Lemma \ref{lem:comparison-basic}, $\phi$ is a right-continuous comparison function. 
\end{proof}

\begin{Rem}\label{rem:closed}
Even if $A$ is closed, we do not assure that $f(A)$ is closed. 
Let $X = (0,\infty) \setminus \{1\}$ be equipped with the topology induced by the Euclidean metric.   
Let $A := (0,1) \subset X$. 
Then, it is a closed subset of $X$. 
Let $f(x) = (x/2) + 1$. 
Then, $f(X) = (1,\infty) \setminus \{3/2\} \subset X$, and $f(A) = (1,3/2)$, which is not closed in $X$.  
Therefore, $T(A)$ and $T(B)$ may not be closed in the proof in \cite[Lemma 5]{BessenyeiPenzes2021}. 
The argument holds if we replace $T(A)$ and $T(B)$ with $\overline{T(A)}$ and $\overline{T(B)}$, respectively. 
Therefore, we have {\it not} assumed that any element of $\mathcal{F}(X)$ is a closed subset of $X$. 
\end{Rem}

\begin{Lem}\label{lem:HP-is-semi-metric-if-closed}
Let $\overline{\mathcal{F}}(X)$ be the set of all elements of $\mathcal{F}(X)$ which are closed. 
Then, $d_{HP}$ is a semi-metric on $\overline{\mathcal{F}}(X)$. 
\end{Lem}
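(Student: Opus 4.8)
The plan is to reduce the statement to the single axiom that is not yet available. By Lemma \ref{lem:HP-normal-and-regular}(i), $d_{HP}$ is a symmetric pre-metric on $\mathcal{F}(X)$: it is symmetric, non-negative, finite, and vanishes on the diagonal, and all of these properties restrict to the subset $\overline{\mathcal{F}}(X)$. Hence the only thing to prove is the positivity (separation) property: if $A, B \in \overline{\mathcal{F}}(X)$ and $d_{HP}(A,B) = 0$, then $A = B$.

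First I would unwind the hypothesis $d_{HP}(A,B) = 0$. By definition of the infimum, for every $\epsilon > 0$ there is some $r \in (0,\epsilon)$ with $A \subset B(r)$ and $B \subset A(r)$; since the $r$-neighborhood $H(r)$ is monotone in $r$, this gives $A \subset B(\epsilon)$ and $B \subset A(\epsilon)$ for every $\epsilon > 0$. Concretely: for every $x \in A$ and every $\epsilon > 0$ there exists $y \in B$ with $d(x,y) < \epsilon$, and symmetrically with $A$ and $B$ interchanged.

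Next I would show $A \subset \overline{B}$. Fix $x \in A$ and let $U$ be an arbitrary open set containing $x$; by the definition of the topology there is $r > 0$ with $B(x,r) \subset U$, and by the previous step there is $y \in B$ with $d(x,y) < r$, i.e. $y \in B(x,r) \cap B \subset U \cap B$. Thus every open neighborhood of $x$ meets $B$, so $x \in \overline{B}$. Since $B$ is closed, $\overline{B} = B$, and hence $A \subset B$. Running the same argument with the roles of $A$ and $B$ exchanged gives $B \subset A$, so $A = B$, which completes the proof.

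There is no serious obstacle here; the one point requiring a little care — and the reason the closedness hypothesis is essential — is that in a semi-metric space the ball $B(x,r)$ need not be open, so one cannot argue directly with balls as in the metric case. The remedy is to express membership in the closure through arbitrary open neighborhoods and use only that each such neighborhood contains some ball $B(x,r)$, which is exactly how the topology is defined in the paper. Remark \ref{rem:closed} already explains why one should not expect $d_{HP}$ to separate points on all of $\mathcal{F}(X)$.
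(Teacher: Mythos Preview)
Your proof is correct and follows the same overall reduction as the paper: both arguments note that, by Lemma~\ref{lem:HP-normal-and-regular}(i), only positivity remains, and both show $A\subset B$ (and symmetrically $B\subset A$) from $d_{HP}(A,B)=0$ by producing, for each $x\in A$, points of $B$ arbitrarily $d$-close to $x$.

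The route to the conclusion $x\in B$ differs, however. The paper builds a sequence $(y_n)_n$ in $B$ with $d(x,y_n)<1/n$, so that $y_n\to x$, and then invokes \cite[Theorem~4.2]{ChrzaszczJachymskiTurobos2018} to the effect that closed sets in a (regular) semi-metric space are sequentially closed, whence $x\in B$. You instead argue purely topologically: every open neighborhood $U\ni x$ contains some ball $B(x,r)$ by the definition of the topology, and this ball meets $B$, so $x\in\overline{B}=B$. Your approach is more self-contained---it avoids the external citation and the subtlety, flagged in the paper's footnote, about whether the neighborhood and sequential topologies coincide---while the paper's approach makes explicit the sequential picture that is often useful elsewhere. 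Both are valid; yours is arguably the cleaner of the two here.
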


\begin{proof}
Assume that $d_{HP}(A, B) = 0$. 
Then, for every $n \ge 1$, 
$A \subset B(1/n)$. 
Let $x \in A$. 
Then, for each $n \ge 1$, 
there exists $y_n \in B$ such that $d(x, y_n) < 1/n$. 
By \cite[Theorem 4.2]{ChrzaszczJachymskiTurobos2018}, the limit of every convergent sequence in a closed subset is also in the closed subset\footnote{In \cite[Section 2]{BessenyeiPenzes2022}, it is stated that the neighborhood topology and the sequential topology may differ from each other, but this is not compatible with  \cite[Theorem 4.2]{ChrzaszczJachymskiTurobos2018}.}. 
Since $B$ is closed, $x \in B$. 
Hence, $A \subset B$. 
In the same manner, we see that $B \subset A$. 
\end{proof}

For metric spaces, Blaschke's completeness theorem \cite{Blaschke1949} assures the completeness of $\mathcal{K}(X)$ with respect to the Hausdorff-Pompeiu metric $d_{HP}$.

\subsection{Product space}\label{subsec:product-space}

Hereafter, we need to deal with product spaces $X^q$ and $\mathcal{F}(X)^q$. 
Elements in these product spaces are written in bold face. 

We first deal with the product space of $X$. 
Let 
$$ d_{\infty}\left({\bf x}, {\bf y}\right) := \max_{1 \le i \le q} d(x_i, y_i), \ {\bf x} = (x_1, \dots, x_q), \ {\bf y}  = (y_1, \dots, y_q) \in X^q. $$

\begin{Lem}\label{lem:product-semimetric}
(i) If $(X,d)$ is a semi-metric space, then, $(X^q, d_{\infty})$ is a semi-metric space. \\ 
(ii) If a semi-metric space $(X,d)$ is normal, regular, or complete, 
then, $(X^q, d_{\infty})$ is also normal, regular,  or complete, respectively. 
\end{Lem}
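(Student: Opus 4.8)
The plan is to reduce everything about $(X^q,d_\infty)$ to the corresponding property of $(X,d)$ coordinatewise, exploiting that $d_\infty$ is a maximum over the $q$ coordinates. First I would verify part (i): symmetry of $d_\infty$ is immediate from symmetry of $d$ in each coordinate, and $d_\infty(\mathbf{x},\mathbf{y})=0$ holds iff $d(x_i,y_i)=0$ for all $i$, which by the positivity of $d$ is equivalent to $x_i=y_i$ for all $i$, i.e. $\mathbf{x}=\mathbf{y}$; this gives that $d_\infty$ is a semi-metric.

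For part (ii), the main point is to compute the basic triangle function $\Phi_{d_\infty}$ in terms of $\Phi_d$. I would show that
\[
\Phi_{d_\infty}(u,v)=\Phi_d(u,v)\qquad\text{for all }u,v\in[0,\infty).
\]
The inequality $\Phi_{d_\infty}(u,v)\le\Phi_d(u,v)$ follows because if $(\mathbf{x},\mathbf{y})\in\mathcal{I}_{d_\infty}(u,v)$ via a witness $\mathbf{z}\in X^q$ with $d_\infty(\mathbf{x},\mathbf{z})\le u$ and $d_\infty(\mathbf{y},\mathbf{z})\le v$, then for each coordinate $d(x_i,z_i)\le u$ and $d(y_i,z_i)\le v$, so $d(x_i,y_i)\le\Phi_d(u,v)$, and taking the max over $i$ gives $d_\infty(\mathbf{x},\mathbf{y})\le\Phi_d(u,v)$. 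The reverse inequality comes from embedding a one-coordinate configuration diagonally: given $(x,y)\in\mathcal{I}_d(u,v)$ in $X$ with witness $z$, put $\mathbf{x}=(x,\dots,x)$, $\mathbf{y}=(y,\dots,y)$, $\mathbf{z}=(z,\dots,z)$; then $(\mathbf{x},\mathbf{y})\in\mathcal{I}_{d_\infty}(u,v)$ and $d_\infty(\mathbf{x},\mathbf{y})=d(x,y)$, so $\sup$ over such configurations gives $\Phi_{d_\infty}(u,v)\ge\Phi_d(u,v)$. Once this identity is in hand, normality of $d_\infty$ (i.e. $\Phi_{d_\infty}(u,v)<\infty$) and regularity of $d_\infty$ (continuity of $\Phi_{d_\infty}$ at $(0,0)$) follow immediately from the corresponding properties of $\Phi_d$.

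For completeness, I would argue that a sequence $(\mathbf{x}_n)_n$ in $X^q$ is Cauchy with respect to $d_\infty$ iff each coordinate sequence $(x_{n,i})_n$ is Cauchy in $(X,d)$ (using $d(x_{n,i},x_{m,i})\le d_\infty(\mathbf{x}_n,\mathbf{x}_m)$ for one direction and $d_\infty(\mathbf{x}_n,\mathbf{x}_m)=\max_i d(x_{n,i},x_{m,i})$ for the other), and likewise for convergence. Then completeness of $(X,d)$ gives, for each $i$, a limit $x_{\infty,i}$, and $\mathbf{x}_\infty:=(x_{\infty,1},\dots,x_{\infty,q})$ is the $d_\infty$-limit of $(\mathbf{x}_n)_n$ because $d_\infty(\mathbf{x}_n,\mathbf{x}_\infty)=\max_i d(x_{n,i},x_{\infty,i})\to 0$ as $n\to\infty$ (a finite maximum of null sequences).

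I do not expect a genuine obstacle here; the only mild subtlety is being careful with the basic triangle function, making sure the witness in $X^q$ is taken coordinatewise and that the diagonal embedding argument for the reverse inequality $\Phi_{d_\infty}\ge\Phi_d$ is stated cleanly. Everything else is a routine unwinding of the maximum definition of $d_\infty$, and the argument is identical for all three properties (normal, regular, complete) modulo the $\Phi$-identity.
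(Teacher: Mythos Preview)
Your proposal is correct and follows essentially the same approach as the paper: reduce coordinatewise, bound $\Phi_{d_\infty}$ by $\Phi_d$, and handle completeness via coordinate Cauchy sequences. The only difference is that you prove the full identity $\Phi_{d_\infty}=\Phi_d$ via the diagonal embedding, whereas the paper only establishes (and only needs) the inequality $\Phi_{d_\infty}(u,v)\le\Phi_d(u,v)$; your extra reverse inequality is correct but unnecessary for the lemma.
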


\begin{proof}
It is obvious that $d_{\infty}$ is a semi-metric on $X^q$. 

Let ${\bf x}, {\bf y}, {\bf z} \in X^q$. 
Assume that $d_{\infty}({\bf x}, {\bf y}) \le u$ and $d_{\infty}({\bf y}, {\bf z}) \le v$. 
Let ${\bf x} =: (x_1, \dots, x_q)$, ${\bf y} =: (y_1, \dots, y_q)$, and ${\bf z} =: (z_1, \dots, z_q)$. 
Then, for every $i$, 
$d(x_i, y_i) \le u$ and $d(y_i, z_i) \le v$. 
By the definition of the basic triangle function,  $d(x_i, z_i) \le \Phi_d (u,v)$. 
By the definition of $d_{\infty}$, $d_{\infty} ({\bf x}, {\bf z}) \le  \Phi_d (u,v)$. 
By the definition of the basic triangle function, $\Phi_{d_{\infty}} (u,v) \le  \Phi_d (u,v)$. 

By this inequality, we see that $d_{\infty}$ is a normal or regular semi-metric on $X^q$ if $d$ is a normal or regular semi-metric on $X$, respectively.

Assume that $({\bf x}_n)_n$ is a Cauchy sequence on $(X^q, d_{\infty})$. 
Let ${\bf x}_n =: (x_{n,1}, \dots, x_{n,q}) \in X^q$. 
Then, by the definition of $d_{\infty}$, it holds that for each $i$, $(x_{n,i})_n$ is also a Cauchy sequence on $(X, d)$. 
Since $(X,d)$ is complete, for each $i$, there exists $x_i \in X$ such that $\lim_{n \to \infty} d(x_{n,i}, x_i) = 0$. 
Let ${\bf x} := (x_1, \dots, x_q) \in X^q$. 
Then, by the definition of $d_{\infty}$,
$\lim_{n \to \infty} d_{\infty}({\bf x}_{n}, {\bf x}) = 0$. 
\end{proof}

We secondly deal with the product space of $\mathcal{F}(X)$. 
Let 
\[ d_{HP}^{\infty}({\bf A}, {\bf B}) := \max_{1 \le i \le q} d_{HP}(A_i, B_i), \ {\bf A} = (A_1, \dots, A_q), {\bf B} = (B_1, \dots, B_q) \in \mathcal{F}(X)^q.   \]
Then, 
\begin{Lem}\label{lem:product-HP-semi-metric}
(i) If $(X,d)$ is a semi-metric space, then, $d_{HP}^{\infty}$ is a symmetric pre-metric on $\mathcal{F}(X)^q$. \\
(ii) If a semi-metric space $(X,d)$ satisfies the normal property and the regular property, 
then, $(\mathcal{F}(X)^q, d_{HP}^{\infty})$ satisfies the normal property and the regular property. 
\end{Lem}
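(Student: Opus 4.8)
The plan is to reduce everything to the single‑space facts already proved for $d_{HP}$ in Lemma \ref{lem:HP-normal-and-regular}, in exactly the way Lemma \ref{lem:product-semimetric} reduces the product semi‑metric $d_\infty$ to $d$.

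For part (i) I would first check that $d_{HP}^\infty$ takes values in $[0,\infty)$: by Lemma \ref{lem:HP-normal-and-regular}(i) each coordinate $d_{HP}(A_i,B_i)$ is finite (recall $\mathcal{F}(X)$ consists of non‑empty bounded subsets, on which $d_{HP}$ was shown to be finite), hence so is the maximum over the $q$ coordinates. Symmetry of $d_{HP}^\infty$ is immediate from the coordinate‑wise symmetry of $d_{HP}$, and $d_{HP}^\infty({\bf A},{\bf A}) = \max_{1\le i\le q} d_{HP}(A_i,A_i) = 0$. Thus $d_{HP}^\infty$ is a symmetric pre‑metric. I would also note, as in Remark \ref{rem:closed} and Lemma \ref{lem:HP-is-semi-metric-if-closed}, that one cannot in general upgrade this to a semi‑metric, since positivity can already fail for $d_{HP}$ on non‑closed sets; this is why the statement only claims a pre‑metric.

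For part (ii) the key step is the comparison inequality
\[ \Phi_{d_{HP}^\infty}(u,v) \le \Phi_{d_{HP}}(u,v), \qquad u, v \in [0,\infty), \]
proved by the same coordinate‑wise argument as in Lemma \ref{lem:product-semimetric}: if ${\bf A},{\bf B},{\bf C} \in \mathcal{F}(X)^q$ satisfy $d_{HP}^\infty({\bf A},{\bf B}) \le u$ and $d_{HP}^\infty({\bf B},{\bf C}) \le v$, then $d_{HP}(A_i,B_i) \le u$ and $d_{HP}(B_i,C_i) \le v$ for every $i$, so $d_{HP}(A_i,C_i) \le \Phi_{d_{HP}}(u,v)$ by the definition of the basic triangle function of $d_{HP}$; taking the maximum over $i$ gives $d_{HP}^\infty({\bf A},{\bf C}) \le \Phi_{d_{HP}}(u,v)$, and the definition of the basic triangle function of $d_{HP}^\infty$ then yields the displayed inequality. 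Granting this, the normal property for $d_{HP}^\infty$ follows because $\Phi_{d_{HP}}(u,v) < \infty$ for all $u,v$ by Lemma \ref{lem:HP-normal-and-regular}(ii), and the regular property follows because $0 \le \Phi_{d_{HP}^\infty}(u,v) \le \Phi_{d_{HP}}(u,v) \to \Phi_{d_{HP}}(0,0) = 0$ as $(u,v)\to(0,0)$, using that $d_{HP}$ is regular (Lemma \ref{lem:HP-normal-and-regular}(ii)) and that the proof of that lemma in fact shows $\Phi_{d_{HP}}$ is continuous at $(0,0)$ with value $0$; hence $\Phi_{d_{HP}^\infty}$ is likewise continuous at $(0,0)$.

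I do not expect any genuine obstacle: this is a routine "product of copies" lemma. The only points needing a little care are (a) that $d_{HP}$, and hence $d_{HP}^\infty$, is merely a symmetric pre‑metric rather than a semi‑metric, so part (i) cannot be strengthened; and (b) that the comparison of basic triangle functions must be carried out coordinate‑wise, mirroring the corresponding step in the proof of Lemma \ref{lem:product-semimetric}, rather than asserted directly.
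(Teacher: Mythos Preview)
Your proposal is correct and follows essentially the same route as the paper: part (i) is immediate from the corresponding fact for $d_{HP}$, and part (ii) is obtained by the coordinate-wise comparison $\Phi_{d_{HP}^\infty}(u,v)\le \Phi_{d_{HP}}(u,v)$ together with Lemma~\ref{lem:HP-normal-and-regular}(ii), exactly mirroring the proof of Lemma~\ref{lem:product-semimetric}. One small caveat: your finiteness argument in (i) invokes Lemma~\ref{lem:HP-normal-and-regular}(i), whose proof uses the normal property, so strictly speaking you are already working under the standing hypotheses of the Hausdorff--Pompeiu subsection rather than a bare semi-metric space; the paper glosses over this as well.
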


\begin{proof}
Assertion (i) follows from the definition of $d_{HP}^{\infty}$.  

We show (ii). 
Let ${\bf A} = (A_1, \dots, A_q), {\bf B} = (B_1, \dots, B_q),  {\bf C} = (C_1, \dots, C_q) \in \mathcal{F}(X)^q$. 
Then, for every $i$, 
$$ d_{HP}(A_i, C_i) \le \Phi_{d_{HP}}\left(d_{HP}(A_i, B_i), d_{HP}(B_i, C_i)\right) \le \Phi_{d_{HP}}\left(d_{HP}^{\infty}({\bf A}, {\bf B}), d_{HP}^{\infty}({\bf B}, {\bf C})\right). $$
Hence, 
$d_{HP}^{\infty}({\bf A}, {\bf C}) \le \Phi_{d_{HP}}\left(d_{HP}^{\infty}({\bf A}, {\bf B}), d_{HP}^{\infty}({\bf B}, {\bf C})\right)$. 
So, 
$\Phi_{d_{HP}^{\infty}}(u,v) \le \Phi_{d_{HP}}(u,v)$. 
By Lemma \ref{lem:HP-normal-and-regular}, 
$(\mathcal{F}(X), d_{HP})$ satisfies the normal property and the regular property. 
Thus we have the normal property and the regular property for $(\mathcal{F}(X)^q, d_{HP}^{\infty})$. 
\end{proof}

We now deal with the graph-directed Markov system. 
By Lemma \ref{lem:weak-contraction-semimet-HP}, 
\begin{Lem}\label{lem:weak-contraction-HP-product}
Let $T$ be a map as in \eqref{eq:def-T}.  
Let  $E := \cup_{i,j \in V} E_{i,j}$ and $\phi := \max_{e \in E} \phi_{e}$. 
Then, $T(\mathcal{F}(X)^q) \subset \mathcal{F}(X)^q$, and, 
\[ d_{HP}^{\infty}(T({\bf A}), T({\bf B})) \le \phi\left( d_{HP}^{\infty}({\bf A}, {\bf B}) \right), \ \ {\bf A}, {\bf B}  \in \mathcal{F}(X)^q, \]
and $\phi$ is a right-continuous comparison function. 
\end{Lem}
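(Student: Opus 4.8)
The plan is to argue coordinate by coordinate and reduce everything to the single-vertex statements already available, chiefly Lemma~\ref{lem:weak-contraction-semimet-HP} and the lemmas it rests on. Throughout, write the $i$-th coordinate of $T({\bf H})$ as $T({\bf H})_i = \bigcup_{j=1}^{q}\bigcup_{e \in E_{i,j}} f_e(H_j)$, and note that $E = \bigcup_{i,j\in V} E_{i,j}$ is finite since $V$ is finite and each $E_{i,j}$ is finite.

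First I would check $T(\mathcal{F}(X)^q) \subset \mathcal{F}(X)^q$. Fix ${\bf H} = (H_1,\dots,H_q) \in \mathcal{F}(X)^q$ and $i \in V$. Each $H_j$ is non-empty and bounded, so $\textup{diam}(H_j) < \infty$ by Lemma~\ref{lem:bounded-basic}; since $f_e$ is a $\phi_e$-contraction, $\textup{diam}(f_e(H_j)) \le \phi_e(\textup{diam}(H_j)) < \infty$, and Lemma~\ref{lem:bounded-basic} again gives $f_e(H_j) \in \mathcal{F}(X)$. By the standing assumption that for every $i$ there is some $j$ with $E_{i,j} \ne \emptyset$, the set $T({\bf H})_i$ is non-empty, and applying Lemma~\ref{lem:conseq-normal-semi}(iii) finitely many times over the pairs $(j,e)$ with $e \in E_{i,j}$ shows it is bounded; hence $T({\bf H})_i \in \mathcal{F}(X)$ and so $T({\bf H}) \in \mathcal{F}(X)^q$.

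For the contraction inequality, fix ${\bf A}, {\bf B} \in \mathcal{F}(X)^q$ and $i \in V$. I would first record the elementary fact that $d_{HP}\big(\bigcup_k U_k, \bigcup_k V_k\big) \le \max_k d_{HP}(U_k, V_k)$ for finite families in $\mathcal{F}(X)$ --- this is precisely the union step used inside the proof of Lemma~\ref{lem:weak-contraction-semimet-HP} --- and apply it with the finite index set $\{(j,e) : j \in V,\ e \in E_{i,j}\}$. For each such pair, if $A_j \subset B_j(r)$ then $f_e(A_j) \subset f_e(B_j)(\phi_e(r)+\epsilon)$ for every $\epsilon > 0$, so $d_{HP}(f_e(A_j), f_e(B_j)) \le \phi_e(r)$ whenever $r > d_{HP}(A_j, B_j)$, and right-continuity of $\phi_e$ gives $d_{HP}(f_e(A_j), f_e(B_j)) \le \phi_e(d_{HP}(A_j, B_j))$. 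Since $\phi_e \le \phi$ pointwise, $\phi$ is increasing, and $d_{HP}(A_j, B_j) \le d_{HP}^{\infty}({\bf A}, {\bf B})$, this is at most $\phi(d_{HP}^{\infty}({\bf A}, {\bf B}))$. Combining with the union estimate yields $d_{HP}(T({\bf A})_i, T({\bf B})_i) \le \phi(d_{HP}^{\infty}({\bf A}, {\bf B}))$, and taking the maximum over $i \in V$ gives the claimed bound. Finally, $\phi$ is a right-continuous comparison function: since $E$ is finite and each $\phi_e$ is one, this follows by induction on $|E|$ from Lemma~\ref{lem:comparison-basic}(ii).

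The one point requiring care --- and the only genuine obstacle --- is that $T({\bf H})_i$ is a union of images of \emph{different} coordinate sets $H_j$ under \emph{different} maps $f_e$, so Lemma~\ref{lem:weak-contraction-semimet-HP} cannot be quoted verbatim. The fix, as above, is to separate the two ingredients of its proof, the union estimate for $d_{HP}$ and the per-map contraction bound, and recombine them over the index set of pairs $(j,e)$; the remaining bookkeeping (non-emptiness, boundedness, and stability of the class of right-continuous comparison functions under finite maxima) is routine given the cited lemmas.
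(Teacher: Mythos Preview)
Your proposal is correct and follows exactly the route the paper intends: the paper's entire ``proof'' is the single phrase ``By Lemma~\ref{lem:weak-contraction-semimet-HP},'' and you have supplied the details of that reduction coordinate by coordinate. In fact you are more careful than the paper, since you explicitly flag that Lemma~\ref{lem:weak-contraction-semimet-HP} as stated treats $\bigcup_i f_i(A)$ with a single set $A$, whereas $T({\bf H})_i$ mixes images of different coordinates $H_j$; your fix --- separating the union estimate for $d_{HP}$ from the per-map contraction bound and recombining over the pairs $(j,e)$ --- is the right one.
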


We remark that in general it does not hold that $T\left(\overline{\mathcal{F}}(X)^q\right) \subset \overline{\mathcal{F}}(X)^q$. 
However, we can show that $T(\mathcal{K}(X)^q) \subset \mathcal{K}(X)^q \subset \overline{\mathcal{F}}(X)^q$.

\section{Proof}\label{sec:proof}

Throughout this section, $(X,d)$ is assumed to be a normal, regular, and complete semi-metric space. 

\begin{Lem}\label{lem:1-subinv-exist}
There exists an element ${\bf H}_0 \in \mathcal{F}(X)^q$ such that ${\bf H}_0 \ne \emptyset$ and  ${\bf H}_0 \subset T({\bf H}_0)$. 
\end{Lem}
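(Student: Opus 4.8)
The plan is to realize $\mathbf{H}_0$ as a product of singletons $(\{x_1\},\dots,\{x_q\})$, where the vector $\mathbf{x}=(x_1,\dots,x_q)\in X^q$ is obtained as a fixed point of a suitable self-map of the product space $X^q$. First I would invoke the standing assumption that every vertex has an outgoing edge: for each $i\in V$ choose some $j(i)\in V$ with $E_{i,j(i)}\ne\emptyset$ and then an edge $e(i)\in E_{i,j(i)}$. This data defines a map $F:X^q\to X^q$ by $F(\mathbf{x})_i:=f_{e(i)}(x_{j(i)})$ for $i\in V$; note that self-loops ($j(i)=i$) cause no difficulty.

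The next step is to verify that $F$ is a $\phi$-contraction on $(X^q,d_\infty)$, where $\phi:=\max_{e\in E}\phi_e$ and $E:=\bigcup_{i,j\in V}E_{i,j}$. For $\mathbf{x},\mathbf{y}\in X^q$ and each $i\in V$, since $f_{e(i)}$ is a $\phi_{e(i)}$-contraction and $\phi_{e(i)}$ is increasing,
\[
d\bigl(F(\mathbf{x})_i,F(\mathbf{y})_i\bigr)=d\bigl(f_{e(i)}(x_{j(i)}),f_{e(i)}(y_{j(i)})\bigr)\le\phi_{e(i)}\bigl(d(x_{j(i)},y_{j(i)})\bigr)\le\phi\bigl(d_\infty(\mathbf{x},\mathbf{y})\bigr),
\]
and taking the maximum over $i$ yields $d_\infty(F(\mathbf{x}),F(\mathbf{y}))\le\phi(d_\infty(\mathbf{x},\mathbf{y}))$. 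Since $E$ is finite and each $\phi_e$ is a right-continuous comparison function, applying Lemma \ref{lem:comparison-basic}(ii) inductively shows that $\phi$ is again a right-continuous comparison function (this is exactly the fact already recorded in Lemma \ref{lem:weak-contraction-HP-product}).

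Then, by Lemma \ref{lem:product-semimetric}, $(X^q,d_\infty)$ is a complete (and normal, regular) semi-metric space, so Theorem \ref{thm:Bessenyei-Pales-FP} applies to $F$ and produces a fixed point $\mathbf{x}=(x_1,\dots,x_q)$. Finally I would set $\mathbf{H}_0:=(\{x_1\},\dots,\{x_q\})$, which is a nonempty product of singletons and hence lies in $\mathcal{F}(X)^q$ (each singleton is bounded). For every $i\in V$, the fixed-point relation $x_i=f_{e(i)}(x_{j(i)})$ together with $e(i)\in E_{i,j(i)}$ gives
\[
x_i\in f_{e(i)}(\{x_{j(i)}\})\subset\bigcup_{j=1}^{q}\bigcup_{e\in E_{i,j}}f_e(\{x_j\})=T(\mathbf{H}_0)_i,
\]
so $\mathbf{H}_0\subset T(\mathbf{H}_0)$, as required.

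I do not expect a serious obstacle here: the construction is essentially routine once the machinery of the previous section is in place. The only points demanding attention are the use of the outgoing-edge hypothesis to make $F$ well defined, and the verification that $F$ remains a $\phi$-contraction for the $\max$-type metric $d_\infty$ — which forces one to pass to the maximum $\phi$ of the individual comparison functions and to know, via Lemma \ref{lem:comparison-basic}, that this maximum is still a (right-continuous) comparison function. The heavier work is deferred to the remainder of the proof of Theorem \ref{thm:main-construct-inv-GDM}, where one upgrades this $T$-subinvariant vector of singletons to the genuine $T$-invariant compact vector and establishes the approximation $\mathbf{H}=\overline{\cup_n T^n(\mathbf{H}_0)}$.
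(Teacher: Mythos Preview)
Your proposal is correct and follows essentially the same approach as the paper: choose an outgoing edge $e(i)\in E_{i,j(i)}$ for each vertex, define $F(\mathbf{x})_i=f_{e(i)}(x_{j(i)})$ on $X^q$, verify that $F$ is a $\phi$-contraction for $\phi=\max_e\phi_e$ on the complete regular semi-metric space $(X^q,d_\infty)$, apply Theorem~\ref{thm:Bessenyei-Pales-FP} to obtain a fixed point, and take $\mathbf{H}_0$ to be the corresponding product of singletons. The paper's argument is essentially identical, with only notational differences.
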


\begin{proof}
By the assumption, there exists a map $J : V \to V$ such that $E_{i, J(i)} \ne \emptyset$. 
Take a map $f_{e_i}, e_i \in E_{i, J(i)}$. 
We recall that each $f_{e_i}$ is a $\phi$-contraction. 
By Lemma \ref{lem:product-semimetric}, $(X^q, d_{\infty})$ is a normal, regular and complete semi-metric space. 
Define a map $F: X^q \to X^q$  by 
\[ F(x_1, \dots, x_q) := \left(f_{e_1}(x_{J(1)}), \dots, f_{e_q} (x_{J(q)})\right). \]  
Since 
\[ d_{\infty}\left(F({\bf x}), F({\bf y})\right) = \max_{i \in V} d(f_{e_i}(x_{J(i)}), f_{e_i}(y_{J(i)})) \le \max_{i \in V}  \phi(d(x_{J(i)}, y_{J(i)})) \] 
\[ \le \phi\left(d_{\infty}({\bf x}, {\bf y})\right), \ {\bf x} = (x_1, \dots, x_q), {\bf y} = (y_1, \dots, y_q), \]
$F$ is a $\phi$-contraction on $(X^q, d_{\infty})$. 
Hence, by Theorem \ref{thm:Bessenyei-Pales-FP}, 
there exists a unique fixed point of $F$. 

Let $(z_1, \dots, z_q)$ be a fixed point of $F$. 
Then, 
$f_{e_i}(z_{J(i)}) = z_i$ for each $i \in V$. 
Let ${\bf H}_0 := \left(\{z_1\}, \dots, \{z_q\}\right) \in \mathcal{F}(X)^q$. 
This is a product of singleton sets. 
Then, 
\[  T({\bf H}_0) = \left(\bigcup_{j = 1}^{q} \bigcup_{e \in E_{i,j}} f_e (\{z_j\})\right)_{i = 1}^{q}. \]
It holds that for each $i \in V$, 
\[ \bigcup_{j = 1}^{q} \bigcup_{e \in E_{i,j}} f_e (\{z_j\}) \supset \bigcup_{e \in E_{i,J(i)}} f_e (\{z_{J(i)}\}) \supset f_{e_i}(\{z_{J(i)}\}) = \{z_i\}. \]
Hence, 
$T({\bf H}_0) \supset {\bf H}_0$. 
\end{proof}

\begin{Lem}\label{lem:2-bdd-inv-exist}
There exists an element ${\bf H} \in \mathcal{F}(X)^q$ such that  $T({\bf H}) = {\bf H}$. 
\end{Lem}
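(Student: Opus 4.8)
The goal is to upgrade the subinvariant vector produced by Lemma~\ref{lem:1-subinv-exist} to a genuine fixed point of $T$ inside $\mathcal{F}(X)^q$. Starting from ${\bf H}_0 \in \mathcal{F}(X)^q$ with ${\bf H}_0 \subset T({\bf H}_0)$, the monotonicity of $T$ (which follows at once from the definition \eqref{eq:def-T}, since $\bf A\subset B$ forces $f_e(A_j)\subset f_e(B_j)$) gives an increasing chain $T^n({\bf H}_0)\subset T^{n+1}({\bf H}_0)$. The natural candidate for the fixed point is ${\bf H}:=\overline{\cup_n T^n({\bf H}_0)}$, i.e.\ take the coordinatewise union of the whole chain and then close each coordinate. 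First I would check that ${\bf H}$ lands in $\mathcal{F}(X)^q$: each $T^n({\bf H}_0)$ is bounded by Lemma~\ref{lem:weak-contraction-HP-product}, and I must verify that the \emph{union} $\cup_n T^n({\bf H}_0)$ is still bounded --- this is where the Kuratowski measure enters. By \eqref{eq:Kuratowski-union} and Lemma~\ref{lem:condense}, $\chi(T^n H_{0,i})$ is nonincreasing in $n$, and more to the point, one controls the Hausdorff--Pompeiu distances $d_{HP}^\infty(T^n{\bf H}_0, T^{n+1}{\bf H}_0)\le \phi^n(d_{HP}^\infty({\bf H}_0, T{\bf H}_0))\to 0$, so the sequence $(T^n{\bf H}_0)_n$ is $d_{HP}^\infty$-Cauchy; by Lemma~\ref{lem:Cauchy-bounded-normal-regular} applied in the symmetric pre-metric space $(\mathcal{F}(X)^q, d_{HP}^\infty)$ (which is normal and regular by Lemma~\ref{lem:product-HP-semi-metric}), this sequence is bounded in $d_{HP}^\infty$, and from a uniform bound on $d_{HP}^\infty(T^n{\bf H}_0,{\bf H}_0)$ together with boundedness of $H_{0,i}$ one deduces $\cup_n T^n H_{0,i}$ is a bounded subset of $X$, hence so is its closure. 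Thus ${\bf H}\in\mathcal{F}(X)^q$.

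**Showing $T({\bf H})={\bf H}$.** The containment ${\bf H}\subset T({\bf H})$: from ${\bf H}_0\subset T({\bf H}_0)$ and monotonicity, $\cup_n T^n{\bf H}_0 = \cup_{n\ge 1}T^n{\bf H}_0 \subset T\big(\cup_n T^n{\bf H}_0\big)\subset T({\bf H})$, and since one can check $T$ maps $\mathcal{F}(X)^q$ into itself with coordinates that are automatically handled after closure (using that $f_e$ is $\phi$-contractive, hence continuous, so $f_e(\overline{A})\subset\overline{f_e(A)}$), we get ${\bf H}=\overline{\cup_n T^n{\bf H}_0}\subset\overline{T({\bf H})}$; but I would rather argue directly that $\overline{\cup_n T^n{\bf H}_0}\subset T({\bf H})$ by noting $T({\bf H})$ need not be closed, so instead pass through $\overline{T({\bf H})}$ and close up. For the reverse containment $T({\bf H})\subset{\bf H}$: fix a coordinate $i$ and an edge $e\in E_{i,j}$; I must show $f_e(\overline{\cup_n T^n H_{0,j}})\subset\overline{\cup_n T^n H_{0,i}}$. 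Take $x\in\overline{\cup_n T^n H_{0,j}}$ and a sequence $x_k\to x$ with $x_k\in T^{n_k}H_{0,j}$; by continuity of $f_e$ (from $\phi$-contractivity, using the regular property so that $\phi$-contractions are continuous), $f_e(x_k)\to f_e(x)$, and $f_e(x_k)\in f_e(T^{n_k}H_{0,j})\subset (T^{n_k+1}H_{0,i})\subset\cup_n T^n H_{0,i}$, so $f_e(x)\in\overline{\cup_n T^n H_{0,i}}=H_i$. Taking the union over $j$ and $e\in E_{i,j}$ yields $T({\bf H})_i\subset H_i$ for each $i$, i.e.\ $T({\bf H})\subset{\bf H}$. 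Combining, $T({\bf H})={\bf H}$.

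**Main obstacle.** The delicate point is not the algebra of unions but the interaction between \emph{taking closures} and the operator $T$, since $T$ does not preserve closedness (Remark~\ref{rem:closed}): I need $f_e(\overline{H})\subset\overline{f_e(H)}$, which requires sequential continuity of the $\phi$-contraction $f_e$, and in a semi-metric space continuity of maps and the agreement of the neighbourhood and sequential notions is exactly what the regular property buys (cf.\ \cite[Theorem~4.2]{ChrzaszczJachymskiTurobos2018} and Lemma~\ref{lem:HP-is-semi-metric-if-closed}). The second genuinely nontrivial step is the boundedness of the infinite union $\cup_n T^n{\bf H}_0$: a finite union of bounded sets is bounded by Lemma~\ref{lem:conseq-normal-semi}(iii), but an infinite one need not be, so one must exploit that the increments shrink geometrically in $d_{HP}^\infty$ and invoke Lemma~\ref{lem:Cauchy-bounded-normal-regular} in the pre-metric space $(\mathcal{F}(X)^q,d_{HP}^\infty)$ to get a \emph{uniform} $d_{HP}^\infty$-bound, then translate that into an honest diameter bound in $(X,d)$ via the normal property. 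Everything else --- monotonicity of $T$, $T$-invariance of $\mathcal{F}(X)^q$, and the chain manipulations --- is routine.
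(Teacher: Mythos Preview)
Your boundedness argument is essentially the paper's: the orbit $(T^n{\bf H}_0)_n$ is $d_{HP}^{\infty}$-Cauchy (Lemma~\ref{lem:weak-contraction-HP-product}), hence $d_{HP}^{\infty}$-bounded by Lemma~\ref{lem:Cauchy-bounded-normal-regular} in the normal regular pre-metric space $(\mathcal{F}(X)^q,d_{HP}^{\infty})$, which yields a uniform $r_0$ with $T^n H_{0,i}\subset H_{0,i}(r_0)$ and then Lemma~\ref{lem:conseq-normal-semi}(ii) finishes. (The Kuratowski measure plays no role at this stage; it only enters later in Lemma~\ref{lem:3-bdd-inv-Kuratowski-0}.)

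The genuine gap is your choice to \emph{close} the union. You set ${\bf H}=\overline{\cup_n T^n{\bf H}_0}$ and then, for the inclusion ${\bf H}\subset T({\bf H})$, you can only reach ${\bf H}\subset\overline{T({\bf H})}$; you explicitly note that $T({\bf H})$ need not be closed (Remark~\ref{rem:closed}), and the phrase ``pass through $\overline{T({\bf H})}$ and close up'' is not an argument. Combined with your (correct) inclusion $T({\bf H})\subset{\bf H}$ you obtain only $\overline{T({\bf H})}={\bf H}$, not $T({\bf H})={\bf H}$, so the lemma as stated is not proved.

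The paper sidesteps this entirely by \emph{not} taking the closure here. Recall $\mathcal{F}(X)$ consists of non-empty \emph{bounded} subsets, with no closedness requirement, so ${\bf H}:=\cup_{n\ge 0}T^n{\bf H}_0$ already lies in $\mathcal{F}(X)^q$ once boundedness is established. The invariance $T({\bf H})={\bf H}$ is then a one-line set-theoretic computation using that images commute with arbitrary unions,
\[
T({\bf H})_i=\bigcup_{j,e}f_e\Bigl(\bigcup_{n\ge 0}H_{n,j}\Bigr)=\bigcup_{n\ge 0}\bigcup_{j,e}f_e(H_{n,j})=\bigcup_{n\ge 0}H_{n+1,i}=H_i,
\]
the last equality because the chain is increasing. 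No continuity of $f_e$, no topology, no closures are needed. The closure is deferred to Lemma~\ref{lem:4-exist-inv-cpt}, where compactness (obtained via Lemma~\ref{lem:3-bdd-inv-Kuratowski-0}) makes each $f_e(\overline{H_j})$ compact and hence closed, which is exactly what is required to push the invariance through the closure.
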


We will show that the union $\cup_n T^n ({\bf H}_0)$ is $T$-invariant. 

\begin{proof}
Let ${\bf H}_{0}$ be as in the proof of Lemma \ref{lem:1-subinv-exist}. 
Let ${\bf H}_{n} := T^n ({\bf H}_{0})$ for $n \ge 1$. 
Then, we see that ${\bf H}_{n} \in \mathcal{F}(X)^q$ by induction on $n$. 
By Lemma \ref{lem:product-semimetric},  
$(\mathcal{F}(X)^q, d_{HP}^{\infty})$ is a normal and regular symmetric pre-metric space. 
By Lemma \ref{lem:weak-contraction-HP-product}, 
$T : \mathcal{F}(X)^q \to \mathcal{F}(X)^q$ is a $\phi$-contraction on $(\mathcal{F}(X)^q, d_{HP}^{\infty})$. 
By Lemma \ref{lem:Cauchy-bounded-normal-regular}, 
$({\bf H}_{n})_n$ is a Cauchy sequence in $(\mathcal{F}(X)^q, d_{HP}^{\infty})$ and hence it is bounded. 
Hence, 
there exists $r_0 > 0$ such that 
$d_{HP}^{\infty}\left({\bf H}_{n}, {\bf H}_{0}\right) < r_0$ for every $n \ge 1$.

It holds that $T$ is an {\it isotone} map, that is, 
$T({\bf A}) \subset T({\bf B})$ holds if ${\bf A} \subset {\bf B}$. 
Therefore, ${\bf H}_{n+1} = T( {\bf H}_{n}) \supset {\bf H}_{n}$ for $n \ge 0$.
Let ${\bf H}_{n} =: (H_{n,1}, \dots, H_{n,q})$ for $n \ge 0$. 
Then, for each $i \in V$, 
$(H_{n,i})_n$ is an increasing sequence of subsets of $X$ and furthermore, $H_{n,i} \subset H_{0,i}(r_0)$ for every $n \ge 0$. 
Let $H_i := \cup_{n \ge 0} H_{n,i}$. 
Then, $H_i \subset H_{0,i}(r_0)$ and $H_i \ne \emptyset$. 
Since $H_{0i}$ is bounded, by Lemma \ref{lem:conseq-normal-semi} (ii), $H_{0,i}(r_0)$ is also bounded. 
Hence, $H_i$ is a bounded subset of $X$. 
Thus, $H_i \in \mathcal{F}(X)$.  
Let ${\bf H} := (H_{1}, \dots, H_{q})$ and $T({\bf H}) =: \left(T({\bf H})_1, \dots, T({\bf H})_q\right)$. 
Then, by \eqref{eq:def-T}, 
\[ T({\bf H})_i = \bigcup_{j = 1}^{q} \bigcup_{e \in E_{i,j}} f_e (H_j) = \bigcup_{j = 1}^{q} \bigcup_{e \in E_{i,j}} f_e \left(\bigcup_{n \ge 0} H_{n,j} \right)  \]
\[ = \bigcup_{n \ge 0} \bigcup_{j = 1}^{q} \bigcup_{e \in E_{i,j}} f_e (H_{n,j} ) = \bigcup_{n \ge 0} H_{n+1, i} = H_i. \]
Hence, $T({\bf H}) = {\bf H}$. 
\end{proof}

\begin{Rem}
In the proof in \cite[Theorem 1]{BessenyeiPenzes2022}, it is stated that the union is bounded. 
This is correct, however, there was no proof in \cite{BessenyeiPenzes2022}. 
The above proof is one way to fix it, and the strategy is similar to the proof in \cite[Theorem 1]{BessenyeiPenzes2021}.  
\end{Rem}

For the Kuratowski measure of non-compactness, let 
$$ \chi^{\infty}({\bf A}) := \max_{i \in V} \chi(A_i), \  {\bf A} = (A_1, \dots, A_q) \in \mathcal{F}(X). $$ 

\begin{Lem}\label{lem:3-bdd-inv-Kuratowski-0}
If ${\bf H} \in \mathcal{F}(X)^q$ and  $T({\bf H}) = {\bf H}$, then, $\chi^{\infty}({\bf H}) = 0$. 
\end{Lem}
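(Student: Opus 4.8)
The plan is to exploit the fact that $T$ is a $\phi$-contraction on $(\mathcal{F}(X)^q, d_{HP}^{\infty})$ (Lemma~\ref{lem:weak-contraction-HP-product}) combined with the behaviour of $\chi$ under $\phi$-contractions (Lemma~\ref{lem:condense}). Since $\mathbf{H}$ is $T$-invariant and bounded, each coordinate $H_i$ is bounded, so $\chi(H_i) < \infty$ and hence $\chi^{\infty}(\mathbf{H}) < \infty$. The key observation is that, because $\phi = \max_{e \in E} \phi_e$ and each $f_e$ is a $\phi_e$-contraction, the covering argument behind Lemma~\ref{lem:condense} gives, for each $i \in V$,
\[
\chi\left(T(\mathbf{H})_i\right) = \chi\left(\bigcup_{j=1}^{q} \bigcup_{e \in E_{i,j}} f_e(H_j)\right) \le \max_{j,\, e \in E_{i,j}} \chi\left(f_e(H_j)\right) \le \phi\left(\max_{j} \chi(H_j)\right) = \phi\left(\chi^{\infty}(\mathbf{H})\right),
\]
where the first inequality uses the finite-union property \eqref{eq:Kuratowski-union}, and the second uses that each $f_e$ satisfies $\chi(f_e(H)) \le \phi_e(r) \le \phi(r)$ whenever $\chi(H) < r$, together with right-continuity of $\phi$. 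Taking the maximum over $i$ yields $\chi^{\infty}(T(\mathbf{H})) \le \phi(\chi^{\infty}(\mathbf{H}))$.

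First I would record the finite-union estimate for $\chi^{\infty}$: if each coordinate of $T(\mathbf{H})_i$ is a finite union of sets of the form $f_e(H_j)$, then $\chi^{\infty}(T(\mathbf{H})) \le \phi(\chi^{\infty}(\mathbf{H}))$, treating separately the degenerate case in which $\chi^{\infty}(\mathbf{H}) = 0$ (where there is nothing to prove) and using Lemma~\ref{lem:condense} only when the relevant $\chi(H_j) > 0$. Next, since $T(\mathbf{H}) = \mathbf{H}$, this reads $\chi^{\infty}(\mathbf{H}) \le \phi(\chi^{\infty}(\mathbf{H}))$. Writing $t := \chi^{\infty}(\mathbf{H}) \in [0, \infty)$, iterating gives $t \le \phi(t) \le \phi^2(t) \le \cdots$, so $t \le \phi^n(t)$ for all $n$; if $t > 0$, then by the defining property of a comparison function $\phi^n(t) \to 0$, forcing $t \le 0$, a contradiction. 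Hence $t = 0$, i.e. $\chi^{\infty}(\mathbf{H}) = 0$. Alternatively one may invoke Lemma~\ref{lem:comparison-basic}(i) directly: if $t > 0$ then $\phi(t) < t$, contradicting $t \le \phi(t)$.

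The only delicate point is the passage from $\chi(f_e(H_j)) \le \phi_e(\chi(H_j))$ (valid coordinatewise when $\chi(H_j) > 0$, from Lemma~\ref{lem:condense}) to the single bound by $\phi(\chi^{\infty}(\mathbf{H}))$: one must handle the case where some $\chi(H_j) = 0$ while $\chi^{\infty}(\mathbf{H}) > 0$. This is harmless, since $\chi(f_e(H_j)) = 0$ in that case (the continuous image of a relatively compact set under $f_e$ is relatively compact — or simply $\chi(f_e(H_j)) \le \phi_e(\epsilon)$ for all $\epsilon > 0$, hence $\le \phi_e(0) = 0$ by right-continuity and Lemma~\ref{lem:comparison-basic}(i)), so the bound $\le \phi(\chi^{\infty}(\mathbf{H}))$ still holds because $\phi \ge 0$. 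I expect this bookkeeping, rather than any conceptual difficulty, to be the main thing to get right; everything else is a direct application of the lemmas already established. Note also that Lemma~\ref{lem:zero-Kuratowski-implies-rel-cpt} then upgrades $\chi^{\infty}(\mathbf{H}) = 0$ to relative compactness of each $H_i$, which is how this lemma will be used downstream, but that is not part of the present statement.
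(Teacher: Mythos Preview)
Your proposal is correct and follows essentially the same route as the paper: both argue by contradiction from $\chi^{\infty}(\mathbf{H})>0$, using the finite-union bound \eqref{eq:Kuratowski-union} together with the condensing property of each $f_e$ (Lemma~\ref{lem:condense}). The paper's version is simply more compressed: it picks an index $i$ with $\chi(H_i)=\chi^{\infty}(\mathbf{H})$ and writes the single chain $\chi(H_i)=\chi\bigl(\bigcup_{j,e} f_e(H_j)\bigr)\le \max_{j,e}\chi(f_e(H_j))<\max_j \chi(H_j)=\chi(H_i)$, without passing through the explicit bound $\phi(\chi^{\infty}(\mathbf{H}))$ or invoking Lemma~\ref{lem:comparison-basic}(i). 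Your bookkeeping for the case $\chi(H_j)=0$ is exactly the implicit justification the paper's strict inequality also needs. (Note: your opening sentence mentions Lemma~\ref{lem:weak-contraction-HP-product} and $d_{HP}^{\infty}$, but your actual argument, like the paper's, works directly with $\chi$ and the individual $f_e$; the Hausdorff--Pompeiu contraction is not used here.)
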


\begin{proof}
Let ${\bf H} =: (H_1, \dots, H_q)$. 
Assume $\chi^{\infty}({\bf H}) > 0$. 
Then, for some $i \in V$, 
$\chi(H_i) = \chi^{\infty}({\bf H})  > 0$. 
Since $T({\bf H}) = {\bf H}$, by \eqref{eq:def-T}, \eqref{eq:Kuratowski-union}, and Lemma \ref{lem:condense}, 
\[ \chi(H_i) = \chi\left( \bigcup_{j = 1}^{q} \bigcup_{e \in E_{i,j}} f_e (H_j) \right) \le \max_{e \in \cup_{j} E_{i,j}} \chi(f_e (H_j)) < \max_{j} \chi(H_j) = \chi(H_i).  \]
This is a contradiction. 
\end{proof}

\begin{Lem}\label{lem:4-exist-inv-cpt}
Assume that ${\bf H} \in \mathcal{F}(X)^q$ and  $T({\bf H}) = {\bf H}$. 
Let ${\bf \overline{H}} := (\overline{H_1}, \dots, \overline{H_q})$, where ${\bf H} = (H_1, \dots, H_q)$. 
Then, each $\overline{H_i}$ is compact in $X$ and $T\left({\bf \overline{H}}\right) = {\bf \overline{H}}$.   
\end{Lem}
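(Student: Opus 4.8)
The plan is to obtain the compactness of each $\overline{H_i}$ from Lemmas \ref{lem:3-bdd-inv-Kuratowski-0} and \ref{lem:zero-Kuratowski-implies-rel-cpt}, and then to deduce $T(\overline{\mathbf H})=\overline{\mathbf H}$ from the continuity of the maps $f_e$ together with elementary topology (continuous images of compacta are compact, and compacta in a Hausdorff space are closed). First I would record that every $\phi$-contraction $f\colon X\to X$ is continuous: if $d(x,w)<\varepsilon$, then by the monotonicity of $\phi$ and Lemma \ref{lem:comparison-basic}(i) one has $d(fx,fw)\le\phi(\varepsilon)<\varepsilon$, so $f(B(x,\varepsilon))\subseteq B(fx,\varepsilon)$; since the regular property guarantees that $B(x,\varepsilon)$ is a neighbourhood of $x$ — indeed $B(x,\eta)\subseteq\operatorname{int}B(x,\varepsilon)$ whenever $\Phi_d(\eta,\eta)<\varepsilon$, using the coordinate-wise monotonicity of $\Phi_d$ — the map $f$ is continuous at every point. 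In particular the standard facts $f_e(\overline A)\subseteq\overline{f_e(A)}$ and ``$f_e$ sends compact sets to compact sets'' become available.

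For the first assertion, Lemma \ref{lem:3-bdd-inv-Kuratowski-0} gives $\chi^\infty(\mathbf H)=0$, hence $\chi(H_i)=0$ for every $i\in V$; since $(X,d)$ is regular, Lemma \ref{lem:zero-Kuratowski-implies-rel-cpt} then yields that each $H_i$ is relatively compact, i.e. $\overline{H_i}$ is compact.

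For the second assertion, fix $i\in V$ and write $T(\overline{\mathbf H})_i=\bigcup_{j=1}^q\bigcup_{e\in E_{i,j}}f_e(\overline{H_j})$. On the one hand, $f_e(\overline{H_j})\subseteq\overline{f_e(H_j)}\subseteq\overline{H_i}$ (by continuity, together with $f_e(H_j)\subseteq H_i$, which follows from $T(\mathbf H)=\mathbf H$), so the finite union satisfies $T(\overline{\mathbf H})_i\subseteq\overline{H_i}$. On the other hand, each $f_e(\overline{H_j})$ is the continuous image of the compact set $\overline{H_j}$, hence compact, so $T(\overline{\mathbf H})_i$, being a finite union of compact sets, is compact, and therefore closed (a regular semi-metric space being Hausdorff, compacta are closed). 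Since $H_i=\bigcup_j\bigcup_{e\in E_{i,j}}f_e(H_j)\subseteq T(\overline{\mathbf H})_i$ and $T(\overline{\mathbf H})_i$ is closed, we get $\overline{H_i}\subseteq T(\overline{\mathbf H})_i$. The two inclusions give $T(\overline{\mathbf H})_i=\overline{H_i}$ for all $i$, that is, $T(\overline{\mathbf H})=\overline{\mathbf H}$.

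The step that needs the most care is the claim that $T(\overline{\mathbf H})_i$ is closed, which rests on compact subsets of a regular semi-metric space being closed. I would either isolate beforehand the small fact that a regular semi-metric space is Hausdorff — distinct points $x\neq y$ are separated by the interiors of the balls $B(x,\delta/2)$ and $B(y,\delta/2)$, where $\delta$ is chosen so that $\Phi_d(\delta/2,\delta/2)<d(x,y)$, since any common point of these two balls would force $d(x,y)\le\Phi_d(\delta/2,\delta/2)$ — or invoke the corresponding statement from \cite{BessenyeiPales2017,ChrzaszczJachymskiTurobos2018}. Everything else is routine once the continuity of the $f_e$ and the two cited measure-of-noncompactness lemmas are in place.
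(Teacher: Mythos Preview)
Your proof is correct and follows essentially the same route as the paper's: compactness of $\overline{H_i}$ via Lemmas \ref{lem:3-bdd-inv-Kuratowski-0} and \ref{lem:zero-Kuratowski-implies-rel-cpt}, then the two inclusions $T(\overline{\mathbf H})_i\subseteq\overline{H_i}$ and $\overline{H_i}\subseteq T(\overline{\mathbf H})_i$, the latter using that a finite union of compacta is compact and that compacta are closed in a Hausdorff space. The only cosmetic differences are that the paper handles the first inclusion by an explicit sequence argument rather than the abstract $f_e(\overline{H_j})\subseteq\overline{f_e(H_j)}$, and it cites \cite[Lemma 1]{BessenyeiPales2017} for Hausdorffness instead of spelling out the $\Phi_d$-based separation you give.
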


\begin{proof}
We show that each $\overline{H_i}$ is compact in $X$. 
By Lemma \ref{lem:3-bdd-inv-Kuratowski-0}, $\chi(H_i) = 0$, that is, $H_i$ is totally-bounded for each $i \in V$. 
Since $(X,d)$ is complete, by Lemma \ref{lem:zero-Kuratowski-implies-rel-cpt}, 
$H_i$ is relatively compact in $X$. 

We first show that $T\left({\bf \overline{H}}\right) \subset {\bf \overline{H}}$. 
Let $y = (y_1, \dots, y_q) \in T\left({\bf \overline{H}}\right)$. 
Then, for each $i \in V$, 
there exist $j \in V$ and $e \in E_{i,j}$ such that $y_i \in f_e (\overline{H_j})$. 
Then, there exists a sequence $(x_{k})_k$ in $H_j$ such that $d(y_i, f_e (x_{k})) \to 0, k \to \infty$. 
Since $T({\bf H}) = {\bf H}$, $f_e (H_j) \subset H_i$. 
Hence, $y_i \in \overline{H_i}$, and then, $y \in {\bf \overline{H}}$. 

We second show that $T\left({\bf \overline{H}}\right) \supset {\bf \overline{H}}$. 
Let $T\left({\bf \overline{H}}\right) =: \left(T\left({\bf \overline{H}}\right)_1, \dots, T\left({\bf \overline{H}}\right)_q \right)$. 
By \eqref{eq:def-T}, 
\[ T\left({\bf \overline{H}}\right)_i = \bigcup_{j = 1}^{q} \bigcup_{e \in E_{i,j}} f_e (\overline{H_j}).  \] 
Since $\overline{H_j}$ is compact and $f_e$ is continuous, $f_e (\overline{H_j})$ is also compact. 
Since the union on the right hand side is a finite union, 
$T\left({\bf \overline{H}}\right)_i$ is also compact in $X$. 
By \cite[Lemma 1]{BessenyeiPales2017}, $X$ is Hausdorff. 
Hence,  $T\left({\bf \overline{H}}\right)_i$ is closed in $X$. 
Since $T({\bf H}) = {\bf H}$,  $H_i \subset T\left({\bf \overline{H}}\right)_i$. 
Thus we see that $\overline{H_i} \subset T\left({\bf \overline{H}}\right)_i$. 
\end{proof}

Now we consider the uniqueness. 
Recall the definition of $\overline{\mathcal{F}}(X)$. 

\begin{Lem}\label{lem:5-unique-inv-cpt}
There exists at most one ${\bf H} \in \overline{\mathcal{F}}(X)^q$ such that $T({\bf H}) = {\bf H}$. 
\end{Lem}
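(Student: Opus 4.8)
The plan is to run the standard contraction-uniqueness argument on the product space $\overline{\mathcal{F}}(X)^q$ equipped with the symmetric pre-metric $d_{HP}^{\infty}$. First I would observe that, by Lemma \ref{lem:HP-is-semi-metric-if-closed}, $d_{HP}$ is a semi-metric (not merely a symmetric pre-metric) on $\overline{\mathcal{F}}(X)$; consequently $d_{HP}^{\infty}({\bf A},{\bf B}) = \max_{1 \le i \le q} d_{HP}(A_i,B_i)$ enjoys the positivity property on $\overline{\mathcal{F}}(X)^q$, namely it vanishes only when $A_i = B_i$ for all $i$, i.e. ${\bf A} = {\bf B}$.

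Then, given ${\bf H}, {\bf H}' \in \overline{\mathcal{F}}(X)^q$ with $T({\bf H}) = {\bf H}$ and $T({\bf H}') = {\bf H}'$, I would apply the contraction estimate of Lemma \ref{lem:weak-contraction-HP-product} (valid since $\overline{\mathcal{F}}(X)^q \subset \mathcal{F}(X)^q$) to obtain
\[ d_{HP}^{\infty}({\bf H},{\bf H}') = d_{HP}^{\infty}(T({\bf H}), T({\bf H}')) \le \phi\!\left(d_{HP}^{\infty}({\bf H},{\bf H}')\right), \]
where $\phi = \max_{e \in E} \phi_{e}$ is a right-continuous comparison function. Were $d_{HP}^{\infty}({\bf H},{\bf H}') > 0$, Lemma \ref{lem:comparison-basic}(i) would give $\phi\!\left(d_{HP}^{\infty}({\bf H},{\bf H}')\right) < d_{HP}^{\infty}({\bf H},{\bf H}')$, a contradiction; hence $d_{HP}^{\infty}({\bf H},{\bf H}') = 0$, and the positivity noted above yields ${\bf H} = {\bf H}'$.

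I do not expect any genuine obstacle here. The two points to keep in mind are: (a) on all of $\mathcal{F}(X)^q$ the map $d_{HP}^{\infty}$ is only a symmetric pre-metric (Lemma \ref{lem:product-HP-semi-metric}), so separation of points is genuinely needed, and it is exactly what the closedness of the components buys us via Lemma \ref{lem:HP-is-semi-metric-if-closed}; and (b) although $T$ need not map $\overline{\mathcal{F}}(X)^q$ into itself (Remark \ref{rem:closed}), this is harmless, because the estimate of Lemma \ref{lem:weak-contraction-HP-product} is an inequality among values of $d_{HP}^{\infty}$ that already holds on $\mathcal{F}(X)^q$, and here ${\bf H}, {\bf H}'$ together with $T({\bf H}) = {\bf H}$ and $T({\bf H}') = {\bf H}'$ all lie in $\overline{\mathcal{F}}(X)^q \subset \mathcal{F}(X)^q$.
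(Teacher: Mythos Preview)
Your proposal is correct and follows essentially the same approach as the paper: apply the $\phi$-contraction estimate from Lemma~\ref{lem:weak-contraction-HP-product} to two fixed points, use $\phi(t)<t$ for $t>0$ to force $d_{HP}^{\infty}({\bf H},{\bf H}')=0$, and then invoke Lemma~\ref{lem:HP-is-semi-metric-if-closed} for positivity on $\overline{\mathcal{F}}(X)^q$. Your additional remarks (a) and (b) on why closedness is needed and why $T$ not preserving $\overline{\mathcal{F}}(X)^q$ is harmless are accurate and make the argument more explicit than the paper's version.
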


\begin{proof}
Assume that ${\bf H}_{i} \in \overline{\mathcal{F}}(X)^q$ such that $T({\bf H}_{i}) = {\bf H}_{i}$, $i = 1,2$. 
By Lemma \ref{lem:weak-contraction-HP-product}, 
\[ d_{HP}^{\infty} \left( {\bf H}_{1}, {\bf H}_{2}\right) = d_{HP}^{\infty} \left( T({\bf H}_{1}), T({\bf H}_{2})\right) \le \phi\left( d_{HP}^{\infty} \left( {\bf H}_{1}, {\bf H}_{2}\right) \right). \]
Therefore, 
if $d_{HP}^{\infty} \left( {\bf H}_{1}, {\bf H}_{2}\right) > 0$, then, $d_{HP}^{\infty} \left( {\bf H}_{1}, {\bf H}_{2}\right) < d_{HP}^{\infty} \left( {\bf H}_{1}, {\bf H}_{2}\right)$. 
Hence, 
$d_{HP}^{\infty} \left( {\bf H}_{1}, {\bf H}_{2}\right) = 0$. 
Since ${\bf H}_{i} \in \overline{\mathcal{F}}(X)^q$, $i=1,2$, 
by Lemma \ref{lem:HP-is-semi-metric-if-closed}, 
${\bf H}_{1} = {\bf H}_{2}$. 
\end{proof}

By Lemma \ref{lem:4-exist-inv-cpt} and Lemma \ref{lem:5-unique-inv-cpt}, 
we have Theorem \ref{thm:main-construct-inv-GDM}.

\section{Transform for semi-metric}\label{sec:trans-semi-met}

If $d = d(x,y)$ is a metric on a set $X$, then, $d(x,y)^{\alpha}$ is also a metric on $X$ if $0 < \alpha < 1$. 
This means that $\Psi(x) = x^{\alpha}$ transforms a metric on a set into another metric on the same set. 
We apply this consideration to the case of semi-metrics. 
We say that a map $\Psi : [0,\infty) \to [0, \infty)$ is a {\it semi-metric transformer} 
if $\Psi(0) = 0$ and $\Psi$ is increasing on $[0,\infty)$ and $\Psi(t) > 0$ for every $t > 0$,  and $\Psi$ is continuous at $0$.

\begin{Lem}
Let $X$ be a non-empty set. 
If $d$ is a semi-metric on $X$ and $\Psi$ is a semi-metric transformer, then, 
$\widetilde{d} (x,y) := \Psi \circ d (x,y) = \Psi(d(x,y))$ is also a semi-metric on $X$, and, the topology defined by $d$ is identical with the topology defined by $\widetilde{d}$.  
\end{Lem}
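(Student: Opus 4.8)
The plan is to prove the two assertions in turn: first that $\widetilde{d}$ is a semi-metric, then that $d$ and $\widetilde{d}$ generate the same topology.

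For the first assertion, symmetry of $\widetilde{d}$ is inherited from that of $d$, and $\widetilde{d}(x,x) = \Psi(0) = 0$. It remains to check that $\widetilde{d}(x,y) = 0$ forces $x = y$: if $x \ne y$, then $d(x,y) > 0$ by positivity of $d$, hence $\widetilde{d}(x,y) = \Psi(d(x,y)) > 0$ by the defining property $\Psi(t) > 0$ for $t > 0$ of a semi-metric transformer. So $\widetilde{d}$ is a semi-metric on $X$.

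For the second assertion, I would first reduce to a statement about balls. Since in a semi-metric space the ball $B(x,r)$ need not itself be open, the appropriate criterion is the following: it suffices to show that for every $x \in X$ and every $r > 0$ there is $s > 0$ with $B_{\widetilde{d}}(x,s) \subseteq B_d(x,r)$, and for every $x \in X$ and every $s > 0$ there is $r > 0$ with $B_d(x,r) \subseteq B_{\widetilde{d}}(x,s)$. Indeed, the first condition makes every $d$-open set $\widetilde{d}$-open and the second makes every $\widetilde{d}$-open set $d$-open, directly from the definition of an open set. For the first inclusion I would take $s := \Psi(r)$, which is positive since $r > 0$; monotonicity of $\Psi$ gives that $\Psi(d(x,y)) < \Psi(r)$ implies $d(x,y) < r$, so $B_{\widetilde{d}}(x,\Psi(r)) \subseteq B_d(x,r)$. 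For the second inclusion I would use that $\Psi$ is continuous at $0$ with $\Psi(0) = 0$: given $s > 0$, choose $\delta > 0$ with $\Psi(t) < s$ for all $t \in [0,\delta)$, and then $B_d(x,\delta) \subseteq B_{\widetilde{d}}(x,s)$. This establishes that the topology defined by $d$ coincides with that defined by $\widetilde{d}$.

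The argument is routine, and the four defining properties of a semi-metric transformer enter transparently: strict positivity off $0$ together with $\Psi(0)=0$ handles the semi-metric axioms, monotonicity handles one ball inclusion, and continuity at $0$ handles the other. The only point that needs a moment's attention is the reduction in the previous paragraph: one should not try to compare open balls but instead use the nested-ball criterion, precisely because $B(x,r)$ may fail to be open in a semi-metric space, as already noted in the paper.
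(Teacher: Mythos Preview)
Your proof is correct and follows essentially the same route as the paper's: the semi-metric axioms are checked exactly as you do, and the equality of topologies is obtained via the same two ball inclusions $B_{\widetilde{d}}(x,\Psi(r)) \subseteq B_d(x,r)$ (from monotonicity) and $B_d(x,\delta) \subseteq B_{\widetilde{d}}(x,s)$ (from continuity at $0$). The only cosmetic difference is that you isolate the nested-ball criterion explicitly, whereas the paper applies it inline while chasing an arbitrary open set $U$.
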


We denote a ball $B(x,r)$ by $B_{d}(x,r)$ or $B_{\widetilde{d}}(x,r)$, according to the metric.  

\begin{proof}
Since $\Phi(x) = 0$ if and only if $x=0$, 
$\widetilde{d}(x,y) = 0$ if and only if $x = y$. 
The symmetry is obvious. 
Therefore, $\widetilde{d}$ is also a semi-metric on $X$. 

Assume that $U$ is an open set with respect to $d$. 
Let $x \in U$. 
Then, there exists $r > 0$ such that $B_d (x,r) \subset U$. 
Let $y \in B_{\widetilde{d}}(x, \Psi(r))$. 
Then,  $\widetilde{d}(x,y) = \Psi(d(x,y)) < \Psi(r)$. 
Since $\Psi$ is increasing, $d(x,y) < r$, that is, $y \in B_d (x,r)$. 
Hence, $B_{\widetilde{d}}(x, \Psi(r)) \subset B_d (x,r) \subset U$. 
Therefore, $U$ is an open set with respect to $\widetilde{d}$.

Conversely, assume that $U$ is an open set with respect to $\widetilde{d}$. 
Let $x \in U$. 
Then, there exists $r > 0$ such that $B_{\widetilde{d}} (x,r) \subset U$. 
Since $\Psi(0) = 0$ and $\Psi$ is continuous at $0$, 
there exists $\delta > 0$ such that $\Psi(\delta) < r$. 
Then, $B_{d}(x, \delta) \subset B_{\widetilde{d}} (x,r) \subset U$. 
Therefore, $U$ is an open set with respect to $d$. 
\end{proof}

\begin{Exa}
The following functions are semi-metric transformers.\\ 
(i) $\Psi(t) := t^{\alpha}$, $\alpha > 0$.\\
(ii) $\Psi(t) = \min\{t^{\alpha},1\}$, $\alpha > 0$.\\
(iii)  $\Psi(t) := \left(\frac{t}{1+t}\right)^{\alpha}$, $\alpha > 0$. 
The semi-metric in \cite[Example 2.2]{DungHang2017} is obtained if $\alpha = 4$ and $d$ is the Euclidean distance. \\
(iv) $\Psi(t)$ is the Cantor function on $t \in [0,1]$ and $\Psi(t) = 1$ on $t \in (1,\infty)$. 
\end{Exa}

\begin{Lem}
Let $X$ be a non-empty set. 
Let $\Psi$ be a semi-metric transformer. 
If $d$ is a normal or regular or complete semi-metric on $X$, then, 
$\widetilde{d} := \Psi \circ d$ is also a normal or regular or complete semi-metric on $X$, respectively.  
\end{Lem}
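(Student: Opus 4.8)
The plan is to treat the three properties separately, and to rest the whole argument on three elementary features of a semi-metric transformer $\Psi$: it is increasing, so $\Psi(s)\le\Psi(t)$ whenever $s\le t$; it satisfies $\Psi(t)>0$ for $t>0$, which lets one infer that $d(x,z)$ is small from the fact that $\Psi(d(x,z))$ is small; and it satisfies $\Psi(0)=0$ and is continuous at $0$, which lets one infer that $\Psi$ of a small quantity is small. I would also use the preceding lemma, which already gives that $\widetilde d:=\Psi\circ d$ is a semi-metric on $X$ inducing the same topology as $d$. Write $\mathcal{I}_d(u,v)$, $\Phi_d$ and $\mathcal{I}_{\widetilde d}(u,v)$, $\Phi_{\widetilde d}$ for the sets $\mathcal{I}(u,v)$ and the basic triangle functions formed with $d$ and with $\widetilde d$ respectively; note $\Phi_d(0,0)=\Phi_{\widetilde d}(0,0)=0$ since in each case $\mathcal{I}(0,0)=\{(x,x)\mid x\in X\}$.

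For normality I would split according to whether $\Psi$ is bounded on $[0,\infty)$. If $\sup_{t\ge0}\Psi(t)=M<\infty$, then $\widetilde d\le M$ identically, so $\Phi_{\widetilde d}(u,v)\le M<\infty$ for all $u,v\ge0$ and there is nothing further to do. If $\Psi$ is unbounded then, being increasing, $\Psi(t)\to\infty$ as $t\to\infty$, so for every $u\ge0$ there is $a_u>0$ with $\Psi(a_u)>u$. Given $(x,y)\in\mathcal{I}_{\widetilde d}(u,v)$ with a witness $z$, the bounds $\Psi(d(x,z))\le u<\Psi(a_u)$ and $\Psi(d(y,z))\le v<\Psi(a_v)$ together with monotonicity of $\Psi$ force $d(x,z)<a_u$ and $d(y,z)<a_v$, so $(x,y)\in\mathcal{I}_d(a_u,a_v)$ and $d(x,y)\le\Phi_d(a_u,a_v)<\infty$ by normality of $d$. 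Applying $\Psi$ gives $\widetilde d(x,y)\le\Psi(\Phi_d(a_u,a_v))<\infty$, and taking the supremum over such $(x,y)$ yields $\Phi_{\widetilde d}(u,v)<\infty$.

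For regularity, fix $\epsilon>0$. Using $\Psi(0)=0$ and continuity of $\Psi$ at $0$, choose $\eta>0$ with $\Psi(t)<\epsilon$ for all $t\in[0,\eta]$; using regularity of $d$, choose $\beta>0$ with $\Phi_d(u',v')\le\eta$ whenever $u',v'\in[0,\beta]$; and set $\delta:=\Psi(\beta)$, which is positive. If $u,v\in[0,\delta)$ and $(x,y)\in\mathcal{I}_{\widetilde d}(u,v)$ with witness $z$, then $\Psi(d(x,z))\le u<\delta=\Psi(\beta)$ and monotonicity give $d(x,z)<\beta$, and likewise $d(y,z)<\beta$; hence $d(x,y)\le\Phi_d(d(x,z),d(y,z))\le\eta$, so $\widetilde d(x,y)=\Psi(d(x,y))<\epsilon$. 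Thus $\Phi_{\widetilde d}(u,v)\le\epsilon$ for all $u,v\in[0,\delta)$, and since $\Phi_{\widetilde d}(0,0)=0$ this shows $\Phi_{\widetilde d}$ is continuous at $(0,0)$.

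For completeness it suffices to check that a $\widetilde d$-Cauchy sequence is $d$-Cauchy and that a $d$-convergent sequence is $\widetilde d$-convergent; then completeness of $d$ transfers to $\widetilde d$, since a $\widetilde d$-Cauchy sequence is $d$-Cauchy, hence $d$-convergent, hence $\widetilde d$-convergent. The first claim follows by contraposition: if $(x_n)$ were $\widetilde d$-Cauchy but not $d$-Cauchy there would be $\epsilon_0>0$ and indices $n_k,m_k$ with $d(x_{n_k},x_{m_k})\ge\epsilon_0$, hence $\widetilde d(x_{n_k},x_{m_k})=\Psi(d(x_{n_k},x_{m_k}))\ge\Psi(\epsilon_0)>0$, contradicting $\widetilde d(x_n,x_m)\to0$. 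The second is immediate from continuity of $\Psi$ at $0$: $d(x_n,x)\to0$ gives $\widetilde d(x_n,x)=\Psi(d(x_n,x))\to\Psi(0)=0$. The only point requiring care, and the reason the normality part is split into two cases, is that $\Psi$ need not be strictly increasing, unbounded, or surjective, so it cannot simply be inverted; every ``transfer of smallness'' above is therefore argued through monotonicity together with the positivity $\Psi(t)>0$ for $t>0$, rather than through an inverse function. The remaining steps are routine.
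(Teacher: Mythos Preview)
Your proof is correct and follows essentially the same route as the paper's. The only organizational difference is that the paper packages the step ``from $\Psi(d(x,z))\le u$ deduce a bound on $d(x,z)$'' into a single tool, the generalized inverse $\Psi^{(-1)}(u):=\inf\{t\ge 0\mid \Psi(t)>u\}$, and then records the unified estimate $\Phi_{\widetilde d}(u,v)\le \Psi\bigl(\Phi_d(\Psi^{(-1)}(u),\Psi^{(-1)}(v))\bigr)$ which it reuses for both normality and regularity; your choices of $a_u$ (normality), of $\beta$ with $\delta=\Psi(\beta)$ (regularity), and the contraposition via $\Psi(\epsilon_0)>0$ (completeness) are exactly the same mechanism carried out by hand in each case.
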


For $u \ge 0$, let 
\[ \Psi^{(-1)}(u) := \inf\{t \ge 0 | \Psi(t) > u\}, \]
where we let $\inf \emptyset := \infty$. 
This is a generalized inverse of $\Psi$. 
If $\Psi$ is strictly increasing, then, there exists the inverse map $ \Psi^{-1}$ of $\Psi$. 
However, $\Psi$ can be non-continuous or unbounded,  the domain of $ \Psi^{-1}$ may not be $[0,\infty)$. 
Hence, the generalized inverse is introduced. 
Let $\displaystyle \Psi(\infty) := \lim_{u \to \infty} \Psi(u)$. 
Since $\Psi$ is increasing and $\Psi(t) > 0$ for $t > 0$, $\Psi(\infty)$ exists and  is contained in $(0,\infty]$. 

\begin{proof}
We denote the basic triangle function for $\widetilde{d}$ by $\Phi_{\widetilde{d}}$.  
Let $u, v \in [0, \Psi(\infty))$. 
Assume that $\widetilde{d}(x,z) \le u$ and $\widetilde{d}(y,z) \le v$. 
Since $\Psi$ is increasing, 
$d(x,z) \le \Psi^{(-1)}(u) < \infty$ and $d(y,z) \le \Psi^{(-1)}(v) < \infty$. 
Hence, $d(x,y) \le \Phi_d \left( \Psi^{(-1)}(u), \Psi^{(-1)}(v)\right)$. 
Thus, 
\begin{equation}\label{eq:basic-triangle-ineq-transformer}
\Phi_{\widetilde{d}} (u,v) \le \Psi \left(\Phi_d \left( \Psi^{(-1)}(u), \Psi^{(-1)}(v)\right)\right), \ u, v \in [0,\Psi(\infty)).
\end{equation} 

Assume that $d$ is normal. 
If $\Psi(\infty) < \infty$, then, $\Phi_{\widetilde{d}} (u,v) \le \textup{diam}(X) \le \Psi(\infty)< \infty$ for every $u, v \in [0, \infty)$. 
If $\Psi(\infty) = \infty$, then, $\Phi_d \left( \Psi^{(-1)}(u), \Psi^{(-1)}(v)\right) < \infty$ for every $u, v \in [0, \infty)$, 
and hence, $\Phi_{\widetilde{d}} (u,v) < \infty$ for every $u, v \in [0, \infty)$. 
Therefore, $\widetilde{d}$ is normal. 

Let $\epsilon > 0$. 
Then, by the assumption, $\Psi(\epsilon) > 0$.  
By the definition of $ \Psi^{(-1)}$,  for every $u \in (0, \Psi(\epsilon))$, $ \Psi^{(-1)}(u) \le \epsilon$. 
Thus we see that 
\begin{equation}\label{eq:gen-inv-conv-zero}
\lim_{u \to +0} \Psi^{(-1)}(u) = 0.
\end{equation} 

Assume that $d$ is regular. 
Then, 
$\displaystyle \lim_{(u,v) \to (0,0)} \Phi_d \left( \Psi^{(-1)}(u), \Psi^{(-1)}(v)\right) = 0$. 
Since $\Psi$ is continuous at $0$, 
$\displaystyle \lim_{(u,v) \to (0,0)}  \Psi \left(\Phi_d \left( \Psi^{(-1)}(u), \Psi^{(-1)}(v)\right)\right) = 0$. 
By this and \eqref{eq:basic-triangle-ineq-transformer}, 
$\displaystyle \lim_{(u,v) \to (0,0)} \Phi_{\widetilde{d}} (u,v) = 0$. 
Thus we see that $\widetilde{d}$ is regular.

Assume that $d$ is complete. 
Assume that $\displaystyle \lim_{m,n \to \infty} \widetilde{d} (x_m, x_n) = 0$. 
By the definition of $ \Psi^{(-1)}$, $u \le \Psi^{(-1)}(\Psi(u))$ for every $u \ge 0$, 
and hence, 
\[ d(x_m, x_n) \le \Psi^{(-1)}\left(\Psi(d(x_m, x_n))\right) = \Psi^{(-1)}\left(\widetilde{d}(x_m, x_n)\right). \]
By this and \eqref{eq:gen-inv-conv-zero}, 
$\displaystyle \lim_{m,n \to \infty} d (x_m, x_n) = 0$. 
Since $d$ is complete, there exists $x \in X$ such that $\displaystyle \lim_{n \to \infty} d(x_n, x) = 0$. 
Since $\Psi$ is continuous at $0$, $\displaystyle \lim_{n \to \infty} \widetilde{d}(x_n, x) = 0$. 
Thus we see that $\widetilde{d}$ is complete. 
\end{proof}

\begin{Rem}
(i) The assumption that $\Psi$ is continuous at $0$ is not used in the statement of normal property. 
The normal property concerns the {\it global} property of semi-metric.\\
(ii) It is easy to see that $\Psi \circ d$ is uniformly equivalent to $d$, 
so we can apply a general result in \cite[Theorem 3.2]{ChrzaszczJachymskiTurobos2018} to the regular property. 
The regular property concerns the {\it local} property of semi-metric.
\end{Rem}

\begin{Exa}\label{exa:1}
Let $X = [0,1]$ and $d(x,y) := |x-y|^{\alpha}$, where $|x-y|$ is the Euclidean distance and $\alpha > 0$. 
Then, $d$ is a regular, normal and complete semi-metric on $X$. 
We consider a graph-directed set on $X$. 
Let $q = 2$. 
Assume that $|E_{i,j}| = 1$ for every $i, j \in \{1,2\}$. 
We denote $f_{e}$ for $e \in E_{i,j}$ by $f_{i,j}$. 
Let 
\[ f_{1,1}(x) = \frac{x}{7}, \ f_{1,2}(x) = \frac{x+2}{7}, \ f_{2,1}(x) = \frac{x+4}{7}, \   f_{2,2}(x) = \frac{x+6}{7}. \]
Then, for ${\bf H} = (H_1, H_2)$, $T({\bf H}) = {\bf H}$ holds if and only if 
\begin{equation}\label{eq:ex1-rep}
(H_1, H_2) = (f_{1,1}(H_1) \cup f_{1,2}(H_2), f_{2,1}(H_1) \cup f_{2,2}(H_2)). 
\end{equation}

We define finite sets $A_n$ and $B_n$, $n \ge 1$, inductively. 
Let $A_0 := \{0\}$ and $B_0 := \{1\}$. 
Let ${\bf H}_0 := (A_0, B_0)$. 
Since $f_{1,1}(0) = 0$ and $f_{2,2}(1) = 1$, 
${\bf H}_0 \subset T({\bf H}_0)$. 
Let 
\[ g_{1,1} (x) := \frac{x}{7}, \ g_{1,2}(x) := \frac{3-x}{7}, \ g_{2,1}(x) = \frac{5-x}{7}, \  g_{2,2}(x) := \frac{x+6}{7}.   \]
 Let $A_{n+1} := g_{1,1}(A_n) \cup g_{1,2}(A_n)$ and $B_{n+1} = g_{2,1}(B_n) \cup g_{2,2}(B_n)$ for $n \ge 0$.  
 Let $r(x) := 1-x$. 

We show that $r(A_n) = B_n$ by induction on $n$. 
The case that $n=0$ is obvious. 
Assume that $r(A_n) = B_n$. 
Then, since $r(r(x)) = x$, $r \circ g_{1,1} \circ r = g_{2,2}$, $r \circ g_{1,2} \circ r = g_{2,1}$, 
it holds that 
$A_n = r(B_n)$ and 
$$ r(A_{n+1}) = r(g_{1,1}(A_n)) \cup r(g_{1,2}(A_n)) = r(g_{1,1}(r(B_n))) \cup r(g_{1,2}(r(B_n)))$$
$$ = g_{2,2}(B_n) \cup g_{2,1}(B_n) = B_{n+1}. $$

We show that $T^n ({\bf H}_0) = (A_n, B_n)$ by induction on $n$. 
The case that $n=0$ is obvious. 
Assume that $T^n ({\bf H}_0) = (A_n, B_n)$. 
Then, by \eqref{eq:ex1-rep}, 
\[  T^{n+1} ({\bf H}_0) = T(A_n, B_n) = (f_{1,1}(A_n) \cup f_{1,2}(B_n), f_{2,1}(A_n) \cup f_{2,2}(B_n)). \]
Since $f_{1,1} = g_{1,1}$, $f_{2,2} = g_{2,2}$, $g_{1,2} = f_{1,2} \circ r$ and $g_{2,1} = f_{2,1} \circ r$, 
\[  (f_{1,1}(A_n) \cup f_{1,2}(B_n), f_{2,1}(A_n) \cup f_{2,2}(B_n)) =  (g_{1,1}(A_n) \cup g_{1,2}(A_n), g_{2,1}(B_n) \cup g_{2,2}(B_n)) \]
\[= (A_{n+1}, B_{n+1}). \]

Let $K_i, i = 1,2$, be non-empty compact subsets of $X = [0,1]$ such that $K_i = g_{i,1}(K_i) \cup g_{i,2}(K_i), i = 1,2$.  
Let ${\bf K} := (K_1, K_2)$. 
Then, 
\begin{equation}\label{eq:inv-set-eq-union-ex}  
T({\bf K}) = {\bf K} = (f_{1,1}(K_1) \cup f_{1,2}(K_2), f_{2,1}(K_1) \cup f_{2,2}(K_2)) = \overline{\cup_{n} T^n ({\bf H}_0)}. 
\end{equation}
We see that $r(K_1) = K_2$. 

Let $S(H) := \cup_{i,j \in \{1,2\}} f_{i,j}(H)$. 
Let $K$ be a unique non-empty compact subset of $[0,1]$ such that $K = S(K)$. 
Then, $K$ is a generalized Cantor set, and in terms of the base-7 expansion, we obtain that 
\[ K = \left\{ \sum_{i=1}^{\infty} \frac{a_i}{7^i} \middle| a_i \in \{0,2,4,6\}, i \ge 1  \right\}. \]

We remark that 
\[ f_{1,1}([0,1]) \subset \left[0, \frac{1}{7}\right], \ f_{1,2}([0,1]) \subset \left[\frac{2}{7}, \frac{3}{7}\right], \  f_{2,1}([0,1]) \subset \left[\frac{4}{7}, \frac{5}{7}\right], \ f_{2,2}([0,1]) \subset \left[\frac{6}{7}, 1\right], \]
 and $f_{i,j}([0,1])$ are disjoint. 
Hence, $K_1 \subset [0,3/7]$ and $K_2 \subset [4/7,1]$.  
In particular, $K_1$ and $K_2$ are disjoint. 
Since each $f_{i,j}$ is injective, 
$f_{i,j}(K_j) \subsetneq f_{i,j}(K_1 \cup K_2) \subset f_{i,j}([0,1])$. 
Therefore, 
\[ K_1 \cup K_2 = f_{1,1}(K_1) \cup f_{1,2}(K_2) \cup f_{2,1}(K_1) \cup f_{2,2}(K_2)  \subsetneq S(K_1 \cup K_2).  \]
By the argument in Section \ref{sec:proof} for $q=1$,
$\overline{\cup_{n \ge 0} S^n (K_1 \cup K_2)} = K$, and hence, $K_1 \cup K_2 \subsetneq K$. 
\end{Exa}

In Example \ref{exa:1}, 
each $f_{i,j}$ is an affine function. 
Now we give an example such that all $f_{i,j}$ are  non-affine functions. 

\begin{Exa}\label{exa:2}
Let $X = [0,1]$ and $d(x,y) := |x-y|$. 
We consider a graph-directed set on $X$. 
Let $q = 2$. 
Assume that $|E_{i,j}| = 1$ for every $i, j \in \{1,2\}$. 
We denote $f_{e}$ for $e \in E_{i,j}$ by $f_{i,j}$. 
Let 
\[ f_{1,1}(x) = \frac{x}{6x+1}, \, f_{1,2}(x) = \frac{-x^2+2x+4}{7}, \, f_{2,1}(x) = \frac{x^2+2}{7}, \,  f_{2,2}(x) = \frac{6-5x}{7-6x}. \]
Then, each $f_{i,j}$ is a weak contraction in the sense of Matkowski-Rus. 
Let 
$g_{1,1} := f_{1,1}$, $g_{2,2} := f_{2,2}$, $g_{1,2} := f_{1,2} \circ r$, and $g_{2,1} := f_{2,1} \circ r$. 
Then, $r \circ g_{1,1} \circ r = g_{2,2}$, $r \circ g_{1,2} \circ r = g_{2,1}$. 

We define finite sets $A_n$ and $B_n$, $n \ge 1$, in the same manner as in Example \ref{exa:1}. 
We see that $T^n ({\bf H}_0) = (A_n, B_n)$ for $n \ge 0$. 
Let $K_i, i = 1,2$, be non-empty compact subsets of $X = [0,1]$ such that $K_i = g_{i,1}(K_i) \cup g_{i,2}(K_i), i = 1,2$. 
Let ${\bf K} := (K_1, K_2)$. 
Then, we can show that \eqref{eq:inv-set-eq-union-ex}. 

Let $S(H) := \cup_{i,j \in \{1,2\}} f_{i,j}(H)$. 
Let $K$ be a unique non-empty compact subset of $[0,1]$ such that $K = S(K)$. 
Since $K_1 \subset [0,1/7] \cup [4/7, 5/7]$ and $K_2 \subset [2/7, 3/7] \cup [6/7,1]$, 
$K_1$ and $K_2$ are disjoint. 
We can show that $K_1 \cup K_2 \subsetneq K$. 
\end{Exa}

We remark that $K_1 = K_2 = K$ can happen. 

\begin{Exa}\label{exa:3}
We keep the same notation as in Example \ref{exa:1}. 
Let 
\[ f_{1,1}(x) = \frac{x}{x+1}, \, f_{1,2}(x) = \frac{x+1}{2}, \, f_{2,1}(x) = \frac{x}{2}, \,  f_{2,2}(x) = \frac{1}{2-x}. \]
Then, each $f_{i,j}$ is a weak contraction in the sense of Matkowski-Rus. 
We see that 
\[ f_{1,1}([0,1]) = f_{2,1} ([0,1]) = \left[0, \frac{1}{2}\right], \ f_{1,2}([0,1]) = f_{2,2} ([0,1]) = \left[\frac{1}{2}, 1\right]. \]
Hence, $T({\bf K}) = {\bf K}$, where we let $K_1 = K_2 = [0,1]$ and ${\bf K} := (K_1, K_2)$. 
Furthermore, $S([0,1]) = [0,1]$. 
\end{Exa}

We can give many examples. 
\cite[Example 3]{MauldinWilliams1988} gives an example different from the above.

\section{Graph-directed de Rham functional equation}\label{sec:fe}

In this section, we generalize the framework of the de Rham functional equation to the graph-directed setting. 
Functional equations indexed by a directed network were recently considered in \cite{MuramotoSekiguchi2020}. 
Here we deal with the functional equation on $[0,1]$ indexed by a graph-directed finite graph in terms of the graph-directed invariant set and conjugate equations \cite{BuescuSerpa2021, Okamura2019}. 
For simplicity we focus on the case corresponding to $q=2$.

We say that a doubly-indexed family of functions  $h_{i,j}, i, j \in \{1,2\}$, on $[0,1]$ is a {\it compatible family} with comparison function $\phi$ 
if each $h_{i,j}$ is strictly increasing and a $\phi$-contraction and a compatibility condition holds, specifically, 
\begin{equation}\label{eq:compatibility-def} 
0 = h_{i,1} (0) < h_{i,1}(1) = h_{i,2}(0) < h_{i,2}(1) = 1, \ i = 1,2. 
\end{equation} 

We state several  properties of the compatible family $h$.  
Let 
\[ D^h_{i, n}  := \left\{  h_{i,i_1} \circ h_{i_1,i_2} \circ \cdots \circ h_{i_{n-1},i_n} (x_n) \middle| i_1, \dots, i_n \in \{1,2\}, x_n \in \{0,1\}  \right\} \]
and let $D^h_{i} := \cup_{n \ge 1} D^h_{i, n}$. 
Then, $(D^h_{i, n})_n$ is increasing with respect to $n$. 
Let 
\[ I^h_{i} (i_1, \dots, i_n)  := \left[ h_{i,i_1} \circ h_{i_1,i_2} \circ \cdots \circ h_{i_{n-1},i_n} (0), h_{i,i_1} \circ h_{i_1,i_2} \circ \cdots \circ h_{i_{n-1},i_n} (1) \right]. \]
Let $\mathcal{I}^h_{i,n} := \left\{ I^h_{i} (i_1, \dots, i_n) : i_1, \dots, i_n \in \{1,2\} \right\}$. 
By the compatibility condition \eqref{eq:compatibility-def}, 
\begin{equation}\label{eq:interval-split} 
I^h_{i} (i_1, \dots, i_n) = I^h_{i} (i_1, \dots, i_n,1) \cup I^h_{i} (i_1, \dots, i_n,2). 
\end{equation}

For every $x \in [0,1]$, 
there exists a sequence $(i_n)_n$ in $\{1,2\}$ such that 
$x \in  I^h_{i} (i_1, \dots, i_n)$ for each $n$, and, if $x \notin D^h_{i}$, then, such $(i_n)_n$ is unique. 
We can choose $i_n$ inductively. 
Indeed, by \eqref{eq:interval-split}, 
if $x \in  I^h_{i} (i_1, \dots, i_n)$, then, $x \in  I^h_{i} (i_1, \dots, i_n, 1)$ or $x \in  I^h_{i} (i_1, \dots, i_n, 2)$.
If additionally $x \notin D^h_{i}$, then, exactly one of $x \in  I^h_{i} (i_1, \dots, i_n, 1)$ and $x \in  I^h_{i} (i_1, \dots, i_n, 2)$ holds.

We denote the length of an interval $I$ by $|I|$. 
Let 
\[ \Delta^{h}_{i, n} := \max_{i_1, \dots, i_n \in \{1,2\}}  \left| I^h_{i} (i_1, \dots, i_n) \right|, \ n \ge 1, \  i = 1,2.  \]
Since $\left| I^h_{i} (i_1, \dots, i_n) \right| \le \phi^n (1)$ and $\phi$ is a comparison function, 
\begin{equation}\label{eq:interval-shrink} 
\lim_{n \to \infty} \Delta^{h}_{i, n} = 0. 
\end{equation}
Hence, $D^h_{i}$ is dense in $[0,1]$. 

Let $\{f_{i,j}\}_{i,j}$ and $\{g_{i,j}\}_{i,j}$ be two compatible families on $[0,1]$ with a common comparison function $\phi$ under the Euclidean distance\footnote{We can replace the Eucliean metric with a different metric  by the semi-metric transformer in the above section, as long as $\{f_{i,j}\}_{i,j}$ and $\{g_{i,j}\}_{i,j}$ are contractions in the sense of Matkowski-Rus under the new metric.}.  
For each $i = 1,2$, 
we define functions $\varphi_i$, on $D^f_{i}$ such that for $i_1, \dots, i_n \in \{1,2\}, x_n \in \{0,1\}$, 
\[ \varphi_i \left( f_{i, i_1} \circ f_{i_1, i_2} \circ \cdots \circ f_{i_{n-1}, i_n} (x_n) \right) = g_{i, i_1} \circ g_{i_1, i_2} \circ \cdots \circ g_{i_{n-1}, i_n} (x_n). \]
We show that this is well-defined. 
It suffices to show that if 
\begin{equation}\label{eq:assumption-well-defined}
f_{i, i_1} \circ f_{i_1, i_2} \circ \cdots \circ f_{i_{n-1}, i_n} (x_n) =  f_{i, i_1^{\prime}} \circ f_{i_1^{\prime}, i_2^{\prime}} \circ \cdots \circ f_{i_{n-1}^{\prime}, i_n^{\prime}} (x_n^{\prime}),  
\end{equation}
for $i_1, \dots, i_n, i_1^{\prime}, \dots, i_n^{\prime} \in \{1,2\}$ and $x_n, x_{n}^{\prime} \in \{0,1\}$, 
then, 
\begin{equation}\label{eq:conclusion-well-defined}  
g_{i, i_1} \circ g_{i_1, i_2} \circ \cdots \circ g_{i_{n-1}, i_n} (x_n) =  g_{i, i_1^{\prime}} \circ g_{i_1^{\prime}, i_2^{\prime}} \circ \cdots \circ g_{i_{n-1}^{\prime}, i_n^{\prime}} (x_n^{\prime}). 
\end{equation}

Assume that $i_k = i_{k}^{\prime} $ for every $k \in \{1,\dots, n\}$. 
By \eqref{eq:assumption-well-defined} and the fact that $f_{i, i_1} \circ f_{i_1, i_2} \circ \cdots \circ f_{i_{n-1}, i_n}$ is strictly increasing, 
$x_n = x_n^{\prime}$ and hence \eqref{eq:conclusion-well-defined} holds. 

Assume that $i_k \ne i_{k}^{\prime} $ for some $k \in \{1,\dots, n\}$. 
Let $\ell := \min\left\{k \in \{1,\dots, n\} | i_k \ne i_{k}^{\prime} \right\}$.
Let $i_0 := i$. 
Then, by \eqref{eq:assumption-well-defined} and the fact that $f_{i, i_1} \circ f_{i_1, i_2} \circ \cdots \circ f_{i_{\ell-2}, i_{\ell-1}}$ is strictly increasing, 
\[  f_{i_{\ell-1}, i_{\ell}} \circ f_{i_{\ell}, i_{\ell+1}} \circ \cdots \circ f_{i_{n-1}, i_n} (x_n) =  f_{i_{\ell -1}, i_{\ell}^{\prime}} \circ f_{i_{\ell}^{\prime}, i_{\ell+1}^{\prime}} \circ \cdots \circ f_{i_{n-1}^{\prime}, i_n^{\prime}} (x_n^{\prime}). \]
Without loss of generality, we can assume that $i_{\ell} < i_{\ell}^{\prime}$. 
Then, $i_{\ell} = 1$ and $i_{\ell}^{\prime} = 2$. 
Since $f_{i_{\ell-1}, 1}$ and $f_{i_{\ell-1}, 2}$ are strictly increasing, 
\[ f_{i_{\ell-1}, 1}(1) \ge f_{i_{\ell-1}, 1} \circ f_{1, i_{\ell+1}} \circ \cdots \circ f_{i_{n-1}, i_n} (x_n) = f_{i_{\ell -1}, 2} \circ f_{2, i_{\ell+1}^{\prime}} \circ \cdots \circ f_{i_{n-1}^{\prime}, i_n^{\prime}} (x_n^{\prime}) \ge f_{i_{\ell -1}, 2} (0). \]
By this and the compatibility condition \eqref{eq:compatibility-def}  for $\{f_{i,j}\}$, 
\[ f_{i_{\ell-1}, 1}(1) = f_{i_{\ell-1}, 1} \circ f_{1, i_{\ell+1}} \circ \cdots \circ f_{i_{n-1}, i_n} (x_n) = f_{i_{\ell -1}, 2} \circ f_{2, i_{\ell+1}^{\prime}} \circ \cdots \circ f_{i_{n-1}^{\prime}, i_n^{\prime}} (x_n^{\prime}) = f_{i_{\ell -1}, 2} (0). \]
Hence, 
\[ f_{1, i_{\ell+1}} \circ f_{i_{\ell+1}, i_{\ell + 2}} \circ \cdots \circ f_{i_{n-1}, i_n} (x_n) = 1, \, f_{2, i_{\ell+1}^{\prime}} \circ f_{i_{\ell+1}, i_{\ell + 2}} \circ \cdots \circ f_{i_{n-1}^{\prime}, i_n^{\prime}} (x_n^{\prime}) = 0.\]
In the same manner, 
we see that $(i_k, j_k) = (2,1)$ for $k \in \{\ell+1, \dots, n\}$ by induction on $k$, and $(x_n, x_n^{\prime}) = (1,0)$. 
By the compatibility condition \eqref{eq:compatibility-def}  for $\{g_{i,j}\}$, 
it holds that 
\[ g_{1, i_{\ell+1}} \circ \cdots \circ g_{i_{n-1}, i_n} (x_n) = 1, \ g_{2, i_{\ell+1}^{\prime}} \circ \cdots \circ g_{i_{n-1}^{\prime}, i_n^{\prime}} (x_n^{\prime}) = 0, \]
and furthermore, 
\[ g_{i_{\ell}-1,1} \circ g_{1, i_{\ell+1}} \circ \cdots \circ g_{i_{n-1}, i_n} (x_n) = g_{i_{\ell}-1,1} (1) = g_{i_{\ell}-1,2} (0) =  g_{i_{\ell}-1,2} \circ g_{2, i_{\ell+1}^{\prime}} \circ \cdots \circ g_{i_{n-1}^{\prime}, i_n^{\prime}} (x_n^{\prime}). \]
Thus we have \eqref{eq:conclusion-well-defined}.

Now we define $\varphi_i (x)$ for $x \notin D^{f}_{i}$. 
Take a unique sequence $(i_n)_n$ in $\{1,2\}$ such that $x \in I^f_{i} (i_1, \dots, i_n)$ for every $n \ge 1$. 
Then, 
$\cap_{n \ge 1} I^g_{i} (i_1, \dots, i_n)$ is a compact subset of $[0,1]$ and 
by \eqref{eq:interval-shrink}, 
it is a singleton.
Let $y \in [0,1]$ such that $\{y\} = \cap_{n \ge 1} I^g_{i} (i_1, \dots, i_n)$. 
Then, we define $\varphi_i (x) := y$. 
Thus a map $\varphi_i : [0,1] \to [0,1]$ is defined. 

On $[0,1]$, we put the topology induced by the Euclidean metric. 

\begin{Prop}\label{prop:basic-sol-gd-dR}
Let $i \in \{1,2\}$. 
Then, it holds that \\
(i) $\varphi_i$ is increasing on $[0,1]$. \\
(ii) $\varphi_i$ is continuous on $[0,1]$.\\
(iii) A pair of maps $(\varphi_1, \varphi_2)$ satisfies the following system of functional equations: 
\begin{equation}\label{eq:gd-dR-fe-def} 
g_{i,j} \circ \varphi_j = \varphi_i \circ f_{i,j}, \ i, j \in \{1,2\}.  
\end{equation}
\end{Prop}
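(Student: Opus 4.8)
The plan is to derive all three parts from a single coding property of $\varphi_i$: that it respects the subdivision of $[0,1]$ into the intervals $I^f_i(\cdot)$ and $I^g_i(\cdot)$. \textbf{Address lemma.} For every $x\in[0,1]$, every $n\ge1$ and every address $(i_1,\dots,i_n)\in\{1,2\}^n$, if $x\in I^f_i(i_1,\dots,i_n)$ then $\varphi_i(x)\in I^g_i(i_1,\dots,i_n)$. If $x\notin D^f_i$ this is immediate: the unique address of $x$ must begin with $(i_1,\dots,i_n)$, and $\varphi_i(x)$ was \emph{defined} to be the point of $\bigcap_m I^g_i$ along that address. If $x\in D^f_i$, write $x=f_{i,j_1}\circ\cdots\circ f_{j_{m-1},j_m}(x_m)$ with $x_m\in\{0,1\}$, so that $\varphi_i(x)=g_{i,j_1}\circ\cdots\circ g_{j_{m-1},j_m}(x_m)$ by construction (this value being representation-independent, as already shown); one then checks $\varphi_i(x)\in I^g_i(i_1,\dots,i_n)$ by the same endpoint-coincidence analysis — invoking \eqref{eq:compatibility-def} and the strict monotonicity of the $f_{i,j},g_{i,j}$ — that was used to establish well-definedness of $\varphi_i$. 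I expect this bookkeeping, reconciling an \emph{arbitrary} address $(i_1,\dots,i_n)$ with the chosen representation of $x$ at a shared endpoint, to be the only delicate point; it runs parallel to the well-definedness argument above.

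\textbf{Monotonicity and continuity (parts (i), (ii)).} By \eqref{eq:interval-split} together with \eqref{eq:compatibility-def} and strict monotonicity, for each fixed $n$ and for $h\in\{f,g\}$ the intervals $\{I^h_i(i_1,\dots,i_n):(i_1,\dots,i_n)\in\{1,2\}^n\}$ tile $[0,1]$ from left to right in the lexicographic order of their addresses, overlapping only at endpoints, with the same combinatorics for $f$ and $g$. Given $x<x'$ in $[0,1]$, pick $n$ with $\Delta^f_{i,n}<x'-x$, possible by \eqref{eq:interval-shrink}. Then $x$ and $x'$ cannot both lie in one depth-$n$ $f$-interval, so $x\in I^f_i(\mathbf i)$ and $x'\in I^f_i(\mathbf i')$ for addresses $\mathbf i\ne\mathbf i'$; since $x<x'$ the ordering forces $\mathbf i$ to precede $\mathbf i'$ lexicographically, hence $\sup I^f_i(\mathbf i)\le\inf I^f_i(\mathbf i')$. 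Applying the address lemma and the same ordering for $g$, $\varphi_i(x)\le\sup I^g_i(\mathbf i)\le\inf I^g_i(\mathbf i')\le\varphi_i(x')$, which is (i). For (ii), note $\varphi_i(D^f_i)=D^g_i$ by construction, that $D^g_i$ is dense in $[0,1]$ (the argument below \eqref{eq:interval-shrink} applied to $g$), and $\varphi_i(0)=0$, $\varphi_i(1)=1$ by \eqref{eq:compatibility-def}; a non-decreasing self-map of $[0,1]$ with dense image cannot have a jump, hence is continuous. (Alternatively, (ii) follows directly from the address lemma: if $|x-x'|<\delta$ with $\delta$ small relative to $\Delta^f_{i,n}$, then $x,x'$ lie in one or two adjacent depth-$n$ $f$-intervals, so $|\varphi_i(x)-\varphi_i(x')|\le2\Delta^g_{i,n}\to0$.)

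\textbf{Functional equation (part (iii)).} First I check \eqref{eq:gd-dR-fe-def} on the dense set $D^f_j$: for $x=f_{j,j_1}\circ\cdots\circ f_{j_{m-1},j_m}(x_m)\in D^f_j$ one has $f_{i,j}(x)\in D^f_i$ with address $(j,j_1,\dots,j_m)$, so by the defining formula for $\varphi_i$, $\varphi_i(f_{i,j}(x))=g_{i,j}\circ g_{j,j_1}\circ\cdots\circ g_{j_{m-1},j_m}(x_m)=g_{i,j}(\varphi_j(x))$. Both sides of \eqref{eq:gd-dR-fe-def} are continuous on $[0,1]$: $\varphi_i,\varphi_j$ by (ii), and $f_{i,j},g_{i,j}$ because $|f_{i,j}(x)-f_{i,j}(y)|\le\phi(|x-y|)$ with $\phi$ continuous at $0$ (indeed $\phi(t)<t$ for $t>0$ by Lemma \ref{lem:comparison-basic}(i)), and likewise for $g_{i,j}$. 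Two continuous maps on $[0,1]$ agreeing on the dense set $D^f_j$ coincide, giving \eqref{eq:gd-dR-fe-def} for all $x$. (One can instead argue directly on the coding: for $x\notin D^f_j$ with address $(j_n)_n$, both $\varphi_i(f_{i,j}(x))$ and $g_{i,j}(\varphi_j(x))$ equal the unique point of $\bigcap_n I^g_i(j,j_1,\dots,j_n)$.)
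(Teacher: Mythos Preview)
Your proof is correct and follows the same overall outline as the paper --- verify the interval-coding behaviour of $\varphi_i$, deduce monotonicity from the left-to-right ordering of the coding intervals, then continuity, then the functional equation on $D^f_j$ extended by density --- but you package things differently in two respects worth noting.

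First, you isolate the \emph{address lemma} explicitly, whereas the paper invokes it tacitly inside the proof of (i) (the line ``$\varphi(x)\le g_{\cdots,1}(1)=g_{\cdots,2}(0)\le\varphi(y)$'' presupposes exactly that $\varphi_i$ maps $I^f_i(\mathbf i)$ into $I^g_i(\mathbf i)$). Your framing makes the logical structure cleaner and lets (i), (ii), (iii) all cite one common fact. For (i) itself the paper argues via the \emph{last} level $N_i$ at which $x,y$ share an interval and then splits at level $N_i+1$; you instead go to a level deep enough that $x,x'$ are already separated and appeal to the lexicographic ordering of the tiles. These are dual versions of the same idea.

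Second, your proof of (ii) is genuinely shorter: the paper checks $\lim_{t\to x+0}\varphi_i(t)=\lim_{t\to x-0}\varphi_i(t)$ by a case split on whether $x\in D^f_i$, while you observe that $\varphi_i$ is monotone with $\varphi_i(D^f_i)=D^g_i$ dense in $[0,1]$, so any jump would create a gap in the image. This is a standard and perfectly valid shortcut; it buys brevity at the cost of the slightly more explicit modulus-of-continuity information visible in the paper's argument (and in your parenthetical alternative via $2\Delta^g_{i,n}$). Part (iii) is handled identically in both proofs.
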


\eqref{eq:gd-dR-fe-def} can be expressed as 
\begin{equation}\label{eq:gd-dR-fe-def-alt} 
\begin{cases} g_{1,1}(\varphi_{1}(x)) = \varphi_1 (f_{1,1}(x)) & x \in [0,1] \\ g_{1,2}(\varphi_{2}(x)) = \varphi_1 (f_{1,2}(x))  & x \in [0,1] \\ g_{2,1}(\varphi_{1}(x)) = \varphi_2 (f_{2,1}(x))   & x \in [0,1] \\ g_{2,2}(\varphi_{2}(x)) = \varphi_2 (f_{2,2}(x))  & x \in [0,1] \end{cases}. 
\end{equation}

\begin{proof}
(i) 
Let $x < y$, and 
\[ N_i := \sup\left\{n \middle| \textup{ there exists }  I \in \mathcal{I}^{f}_{i,n} \textup{ such that } x, y \in I \right\}. \]
By \eqref{eq:interval-shrink}, $N_i$ is finite. 
Let $x, y \in I^{f}_{i} (i_1, \dots, i_{N_i})$. 
Then, 
$x \in I^{f}_{i} (i_1, \dots, i_{N_i}, 1)$ and $y \in I^{f}_{i} (i_1, \dots, i_{N_i},2)$. 
By the definition of $\varphi$, 
\[ \varphi(x) \le g_{i,i_1} \circ g_{i_1, i_2} \circ \cdots \circ g_{i_{N_i},1} (1) =  g_{i,i_1} \circ g_{i_1, i_2} \circ \cdots \circ g_{i_{N_i},2} (0) \le \varphi(y). \]

(ii) By (i), it suffices to show that for each $x \in (0,1)$, 
\begin{equation}\label{eq:right-left-limit}
\lim_{t \to x + 0} \varphi(t) = \lim_{t \to x - 0} \varphi(t), 
\end{equation} 
$\lim_{t \to + 0} \varphi(t) = 0$, and $\lim_{t \to 1-0} \varphi(t) = 1$. 

Assume that $x \notin D^h_{i}$. 
Then, there exists a unique sequence $(i_n)_n$ in $\{1,2\}$ such that $x \in I^f_{i} (i_1, \dots, i_n)$ for every $n \ge 1$. 
Then, $\varphi (x) \in  I^g_{i} (i_1, \dots, i_n)$ for every $n \ge 1$. 
Hence, for every $n \ge 1$, 
\[ f_{i, i_1} \circ f_{i_1, i_2} \circ \cdots \circ f_{i_{n-1}, i_n} (0) < x < f_{i, i_1} \circ f_{i_1, i_2} \circ \cdots \circ f_{i_{n-1}, i_n} (1),  \]
\[ g_{i, i_1} \circ g_{i_1, i_2} \circ \cdots \circ g_{i_{n-1}, i_n} (0) \le \varphi (x) \le g_{i, i_1} \circ g_{i_1, i_2} \circ \cdots \circ g_{i_{n-1}, i_n} (1), \]
and 
\[ \varphi_i (f_{i, i_1} \circ f_{i_1, i_2} \circ \cdots \circ f_{i_{n-1}, i_n} (x_n) ) = g_{i, i_1} \circ g_{i_1, i_2} \circ \cdots \circ g_{i_{n-1}, i_n} (x_n), \ x_n \in \{0,1\}.  \]
By these inequalities, (i),  and \eqref{eq:interval-shrink}, 
\eqref{eq:right-left-limit} holds. 

Assume that $x \in D^h_{i} \cap (0,1)$. 
Then, there exists $n \ge 1$ such that $x \in D^h_{i,n} \setminus D^h_{i,n-1}$, where we let $D^h_{i,0} := \emptyset$. 
Then, there exist  $i_1, \dots, i_{n-1} \in \{1,2\}$ such that  
\[ x =  f_{i, i_1} \circ f_{i_1, i_2} \circ \cdots \circ f_{i_{n-1}, 1} (1) =  f_{i, i_1} \circ f_{i_1, i_2} \circ \cdots \circ f_{i_{n-1}, 2} (0). \]
where  $x = f_{i,1}(1) = f_{i,2}(0)$ for $n = 1$. 
Hence, 
\[  f_{i, i_1} \circ f_{i_1, i_2} \circ \cdots \circ f_{i_{n-1}, 1}(0) < x < f_{i, i_1} \circ f_{i_1, i_2} \circ \cdots \circ f_{i_{n-1}, 2} (1), \]
and 
\[ g_{i, i_1} \circ g_{i_1, i_2} \circ \cdots \circ g_{i_{n-1}, 1} (0) \le \varphi (x) \le g_{i, i_1} \circ g_{i_1, i_2} \circ \cdots \circ g_{i_{n-1}, 2} (1). \]
By these inequalities, (i),  and \eqref{eq:interval-shrink}, 
\eqref{eq:right-left-limit} holds. 

If $i_n = 1$ for every $n \ge 1$, then,  $h_{i, i_1} \circ h_{i_1, i_2} \circ \cdots \circ h_{i_{n-1}, i_n}(0) = 0$ for $h = f \textup{ or } g$. 
By this, (i),  and \eqref{eq:interval-shrink}, $\lim_{t \to + 0} \varphi(t) = 0$. 

If $i_n = 2$ for every $n \ge 1$, then,  $h_{i, i_1} \circ h_{i_1, i_2} \circ \cdots \circ h_{i_{n-1}, i_n}(1) = 1$ for $h = f \textup{ or } g$. 
By this, (i), and \eqref{eq:interval-shrink}, $\lim_{t \to 1- 0} \varphi(t) = 1$.

(iii) Let $x \in  D^{f}_{j}$. 
Then, there exist $j_1, \dots, j_n \in \{1,2\}, y_n \in \{0,1\}$ such that  $x = f_{j,j_1} \circ \cdots \circ f_{j_{n-1},j_{n}}(y_n)$. 
Then, by the definition of $\varphi_i, i = 1,2$,  
\[ g_{i,j} \circ \varphi_j (x) = g_{i,j} \circ \varphi_j  \left(f_{j,j_1} \circ \cdots \circ f_{j_{n-1},j_{n}}(y_n)\right) = g_{i,j} \circ g_{j,j_1} \circ \cdots \circ g_{j_{n-1},j_{n}}(y_n)  \]
\[ = \varphi_i \left( f_{i,j} \circ f_{j,j_1} \circ \cdots \circ f_{j_{n-1},j_{n}}(y_n) \right) =  \varphi_i \circ f_{i,j} (x). \]
By the contraction assumption and (ii), all of $g_{i,j}, \varphi_j, \varphi_i$ and $f_{i,j}$ are continuous. 
Since $D^{f}_{j}$ is dense in $[0,1]$, 
we see that $g_{i,j} \circ \varphi_j (x)  =  \varphi_i \circ f_{i,j} (x)$ for every $x \in [0,1]$. 
\end{proof}

We remark that in the same manner, we can also show that $\varphi_i$ is strictly increasing. 

Second, we state a connection between the functional equation and the graph-directed set. 
Put the product topology on $[0,1]^2$. 
Let 
\[ \Phi_{i,j}(x,y) := (f_{i,j}(x), g_{i,j}(y)), \ \ x, y \in [0,1], \ i = 1,2. \] 
Let 
\begin{equation}\label{eq:def-T-dR-graph} 
T(H_1, H_2) := \left(  \Phi_{1,1}(H_1) \cup  \Phi_{1,2}(H_2), \Phi_{2,1}(H_1) \cup  \Phi_{2,2}(H_2)  \right), \ H_1, H_2 \in \mathcal{P}([0,1]). 
\end{equation}
Then, $(0,0)$ is a fixed point of $\Phi_{1,1}$ and $(1,1)$ is a fixed point of $\Phi_{2,2}$. 
Let ${\bf H}_0 := \left(\{(0,0)\}, \{(1,1)\}\right)$. 
Then, 
\[ T({\bf H}_0) = \left( \{(0,0), (1,1)\},  \{(0,0), (1,1)\} \right),  \]
and hence it is $T$-subinvariant, that is, ${\bf H}_0 \subset T({\bf H}_0)$. 
We also see that 
\[ T^2 ({\bf H}_0) = \left( \{(0,0),  (f_{1,1}(1), g_{1,1}(1)), (1,1)\},  \{(0,0), (f_{2,1}(1), g_{2,1}(1)), (1,1)\} \right).  \]
For $i = 1,2$, let 
\[  D_{i,n}  := \]
{\small\[  \left\{  \left(f_{i, i_1} \circ f_{i_1, i_2} \circ \cdots \circ f_{i_{n-1},i_n} (x_n), g_{i, i_1} \circ g_{i_1, i_2} \circ \cdots \circ g_{i_{n-1},i_n} (x_n)\right) \middle| i_1, \dots, i_n \in \{1,2\}, x_n \in \{0,1\}  \right\}. \]}
Let ${\bf D}_n := (D_{1,n}, D_{2,n})$. 
By induction on $n$, 
we see that $T^{n} \left({\bf H}_0 \right) = {\bf D}_{n-1}$ for $n \ge 2$. 
By the arguments in Section \ref{sec:proof}, 
\[ {\bf K} := \overline{\bigcup_{n \ge 0} T^{n} \left({\bf H}_0 \right)} = \overline{\bigcup_{n \ge 1} {\bf D}_{n}} \]
is $T$-invariant. 

We finally show that $K_i = \left\{(x, \varphi_i (x) | x \in [0,1] \right\}$, $i= 1,2$, where ${\bf K} =: (K_1, K_2)$. 
Let $\widetilde{K_i} := \left\{(x, \varphi_i (x) | x \in [0,1] \right\}$. 
By Proposition \ref{prop:basic-sol-gd-dR} (ii), 
the map $x \mapsto (x, \varphi_i (x))$ is a continuous map from $[0,1]$ to $[0,1]^2$. 
Since $[0,1]$ is compact, 
$\widetilde{K_i}$ is also compact. 
By the definition of $\varphi_i$ on $D^f_i$, 
$D_{i,n} \subset \widetilde{K_i}$ for every $n$.  
Hence, $K_i \subset \widetilde{K_i}$.
By the definition of $\varphi_i$ on $[0,1] \setminus D^f_i$, 
$\widetilde{K_i} \subset \overline{\cup_{n \ge 1} D_{i,n}} = K_i$. 
This completes the proof. 

Discrete approximation is a natural approach for analysis for  \eqref{eq:gd-dR-fe-def} and the framework of Theorem \ref{thm:main-construct-inv-GDM} aligns with the functional equation well. 

\begin{Exa}
Let 
\[ f_{1,1}(x) := \frac{x}{2}, \ f_{1,2}(x) := \frac{x+1}{2}, \ f_{2,1}(x) := \frac{x}{2}, \ f_{2,2}(x) := \frac{x+1}{2},  \]
and 
\[ g_{1,1}(x) := \frac{x}{3}, \ g_{1,2}(x) := \frac{2x+1}{3}, \ g_{2,1}(x) := \frac{x}{4}, \ g_{2,2}(x) := \frac{3x+1}{4}.  \]
Then, $\{f_{i,j}\}$ and $\{g_{i,j}\}$ are compatible families with a certain common comparison function. 
There exists a pair of strictly increasing and continuous functions $(\varphi_1, \varphi_2)$, which is a solution of  \eqref{eq:gd-dR-fe-def}. 
In this case, \eqref{eq:gd-dR-fe-def-alt} is expressed as 
\[ \begin{cases} \frac{1}{3} \varphi_{1}(x) = \varphi_1 \left(\frac{x}{2}\right) & x \in [0,1] \\ \frac{2}{3} \varphi_{2}(x)+ \frac{1}{3} = \varphi_1 \left(\frac{x+1}{2}\right)  & x \in [0,1] \\ \frac{1}{4} \varphi_{1}(x) = \varphi_2 \left(\frac{x}{2}\right)   & x \in [0,1] \\ \frac{3}{4} \varphi_{2}(x) + \frac{1}{4} = \varphi_2 \left(\frac{x+1}{2}\right)  & x \in [0,1] \end{cases}.\]
This can also be rewritten as 
\[ \begin{cases} \varphi_1 (x) = \begin{cases} \frac{1}{3} \varphi_{1}(2x)  & x \in [0,1/2] \\ \frac{2}{3} \varphi_{2}(2x-1)+ \frac{1}{3}  & x \in [1/2,1] \end{cases} \\ \varphi_2 (x) =   \begin{cases} \frac{1}{4} \varphi_{1}(2x)   & x \in [0,1/2]  \\ \frac{3}{4} \varphi_{2}(2x-1)+ \frac{1}{4}  & x \in [1/2,1] \end{cases}  \end{cases}. \]

The graphs of the solution are given as follows: 
\begin{figure}[H]
\begin{minipage}{0.49\columnwidth}
\centering
\includegraphics[scale=0.37, bb = 0 0 640 480]{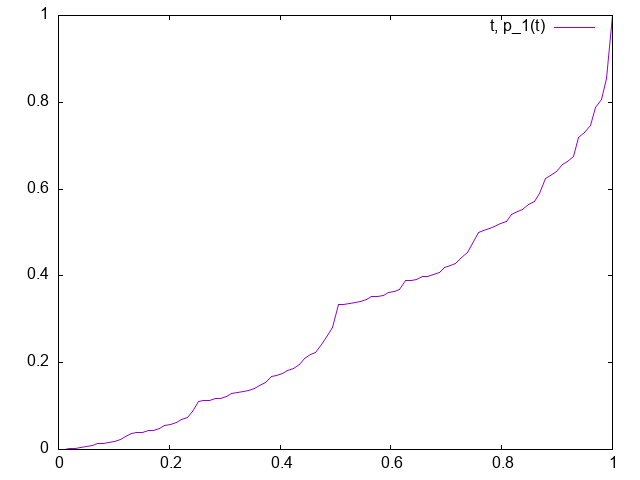}
\caption{Graph of $\varphi_1$}
\end{minipage}
\begin{minipage}{0.49\columnwidth}
\centering
\includegraphics[scale=0.37, bb = 0 0 640 480]{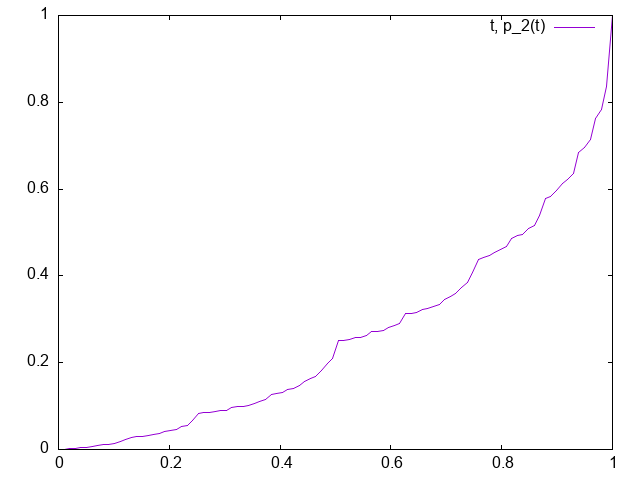}
\caption{Graph of $\varphi_2$}
\end{minipage}
\end{figure}

If we replace $g_{21}$ and $g_{22}$ with non-affine weak contractions
\[ g_{2,1}(x) = \frac{x}{x+1}, \ g_{2,2} (x) = \frac{1}{2-x}, \ x \in [0,1], \]
then, \eqref{eq:gd-dR-fe-def-alt} is expressed as 
\[ \begin{cases} \varphi_1 (x) = \begin{cases} \frac{1}{3} \varphi_{1}(2x)  & x \in [0,1/2] \\ \frac{2}{3} \varphi_{2}(2x-1)+ \frac{1}{3}  & x \in [1/2,1] \end{cases} \\ \varphi_2 (x) =   \begin{cases} \frac{\varphi_{1}(2x) }{\varphi_{1}(2x)  + 1}  & x \in [0,1/2]  \\ \frac{1}{2 - \varphi_{2}(2x-1)}  & x \in [1/2,1] \end{cases}  \end{cases}. \]
The choice of $g_{2,1}$ and $g_{2,2}$ is related to Minkowski's question-mark function. 
The graphs of the solution are given as follows: 
\begin{figure}[H]
\begin{minipage}{0.49\columnwidth}
\centering
\includegraphics[scale=0.37, bb = 0 0 640 480]{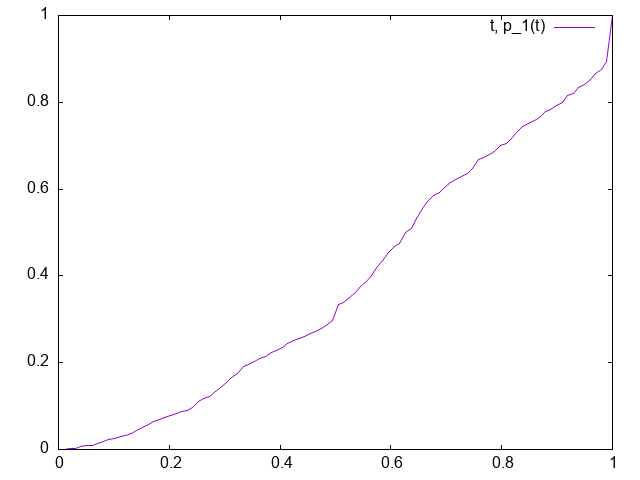}
\caption{Graph of $\varphi_1$}
\end{minipage}
\begin{minipage}{0.49\columnwidth}
\centering
\includegraphics[scale=0.37, bb = 0 0 640 480]{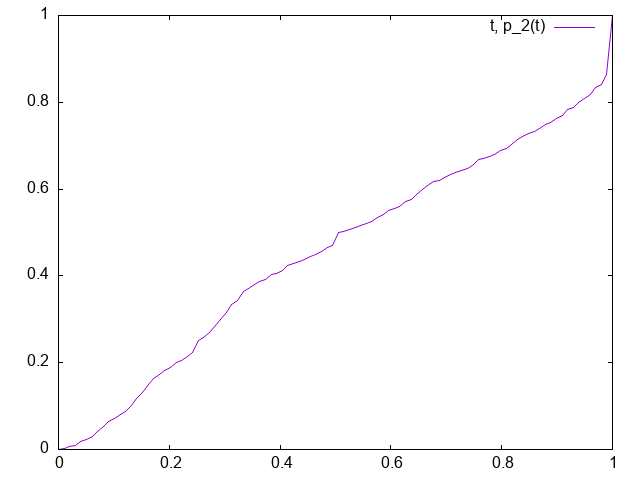}
\caption{Graph of $\varphi_2$}
\end{minipage}
\end{figure}

It would be interesting to know whether $\varphi_1$ and $\varphi_2$ are singular functions or not. 
\end{Exa}

{\it Acknowledgements} \ The author would like to give his thanks to a referee for his or her comments. 
He is grateful to Omer Cantor for comments regarding Remark \ref{rem:closed}.   
He is also grateful to a production editor for improving the English of the manuscript. 

\bibliographystyle{plain}
\bibliography{GDM-simple}

\end{document}